\numberwithin{equation}{section}
\numberwithin{figure}{section}
\theoremstyle{plain}
\newtheorem{theorem}{Theorem}[section]
\newtheorem{lemma}[theorem]{Lemma}
\newtheorem{proposition}[theorem]{Proposition}
\newtheorem{corollary}[theorem]{Corollary}
\newtheorem{definition}{Definition}[section]
\newtheorem{remark}{Remark}[section]
\theoremstyle{remark}
\newtheorem{claim}{Claim}
\newtheorem{question}{Question}
\def\xX{\mathscr{X}}
\def\xC{\mathscr{C}}
\def\xM{\mathscr{M}}
\def\xP{\mathscr{P}}
\def\cC{{\mathcal C}}
\def\cG{{\mathcal G}}
\def\cH{{\mathcal H}}
\def\cN{{\mathcal N}}
\def\cU{{\mathcal U}}
\def\cV{{\mathcal V}}
\def\Cl{{\rm Cl}}
\def\e{{\rm e}}
\def\vep{\varepsilon}
\def\si{\sigma}
\def\Si{\Sigma}
\def\La{\Lambda}
\def\Sing{\operatorname{Sing}}
\def\Per{\operatorname{Per}}
\def\Orb{\operatorname{Orb}}
\title{Generic properties of vector fields identical on a compact set and codimension one partially hyperbolic dynamics}
\author{Shaobo Gan, Ruibin Xi, Jiagang Yang, Rusong Zheng\thanks{S. Gan is partially supported by National Key R\&D Program of China 2022YFA1005801 and NSFC 12161141002. J. Yang is partially supported by CNPq, FAPERJ, PRONEX, MATH-AmSud 220029, and NSFC grants 12271538, 11871487 and 12071202. R. Zheng is partially supported by NSFC grants 12071007, 12101293.}}
\begin{document}

\maketitle

\begin{abstract}
Let $\xX^r(M)$ be the set of $C^r$ vector fields on a boundaryless compact Riemannian manifold $M$.  Given a vector field $X_0\in\xX^r(M)$ and a compact invariant set $\Gamma$ of $X_0$, we consider the closed subset $\xX^r(M,\Gamma)$ of $\xX^r(M)$, consisting of all $C^r$ vector fields which coincide with $X_0$ on $\Gamma$. Study of such a set naturally arises when one needs to perturb a system while keeping part of the dynamics untouched.

A vector field $X\in\xX^r(M,\Gamma)$ is called $\Gamma$-avoiding Kupka-Smale, if the dynamics away from $\Gamma$ is Kupka-Smale.
We show that a generic vector field in $\xX^r(M,\Gamma)$ is $\Gamma$-avoiding Kupka-Smale.
In the $C^1$ topology, we obtain more generic properties for  $\xX^1(M,\Gamma)$. With these results, we further study  codimension one partially hyperbolic dynamics for generic vector fields in $\xX^1(M,\Gamma)$, giving a dichotomy of hyperbolicity and Newhouse phenomenon.

As an application, we obtain that $C^1$ generically in $\xX^1(M)$, a non-trivial Lyapunov stable chain recurrence class of a singularity which admits a codimension 2 partially hyperbolic splitting with respect to the tangent flow is a homoclinic class.
\end{abstract}


\section{Introduction}
Let $M$ be a boundaryless compact Riemannian manifold. We denote by $\xX^r(M)$ the set of all $C^r$ vector field on $M$. Let $X_0\in\xX^r(M)$ and $\Gamma\subset M$ be any $X_0$-invariant compact subset, i.e. $\phi^{X_0}_t(\Gamma)=\Gamma$ for all $t$, where $\phi^{X_0}_t$ is the flow generated by $X_0$.
Let us denote
\[\xX^r(M,\Gamma)=\{X\in\xX^r(M): X|_{\Gamma}=X_0|_{\Gamma}\},\]
which is a nonempty closed subset of $\xX^r(M)$.
Study of such a set naturally arises when one needs to perturb a system while keeping part of the dynamics untouched.
This is often a subtle subject if the dynamics to be kept untouched is not persistent per se, e.g. nonhyperbolic periodic orbits, heterodimensional cycles, and homoclinic orbits of singularities.

An early example appeared in Palis' proof of the $C^1$ $\Omega$-stability conjecture \cite{Pa}, where he claimed that a diffeomorphism with a heterodimensional cycle $\Gamma$ can be perturbed such that no other heterodimensional cycles exist except this one. This can be shown by proving a generic property (Theorem \ref{thm.Gamma-KS}) of the set $\xX^r(M,\Gamma)$. However, a rigorous proof was missing.
Recently, in the study differentiable flows,  Gan-Yang \cite[Proposition 6.23]{GY} applied also a similar result allowing one to perturb a vector field to be weak Kupka-Smale while preserving some homoclinic orbits of singularities.   Shi-Gan-Wen \cite[Sublemma 4.6]{SGW} considered generic properties of a set $\xX^1(M,\Gamma)$ where $\Gamma$ is the closure of a heteroclinic orbit connecting two critical elements (singularities or periodic orbits).
These examples justify the study of generic properties of the set $\xX^r(M,\Gamma)$.

\subsection{Generic properties of $\xX^r(M,\Gamma)$}
Note that $\xX^r(M,\Gamma)$ is a Baire space, endowed with the relative topology as a closed subset of $\xX^r(M)$. Thus it makes sense to consider generic properties of the space $\xX^r(M,\Gamma)$, i.e. properties that hold for a residual subset of $\xX^r(M,\Gamma)$. In fact, the problems leading to consideration of a set as $\xX^r(M,\Gamma)$ can often be solved by proving a generic property of $\xX^r(M,\Gamma)$, as is the case for Palis \cite{Pa}.

The Kupka-Smale property is a fundamental generic property of differentiable dynamical systems, see \cite{PdM}, for instance.
We define a variation of Kupka-Smale property for vector fields in $\xX^r(M,\Gamma)$.
\begin{definition}\label{def.kupka-smale}
  A vector field $X\in\xX^r(M,\Gamma)$ is called {\em $\Gamma$-avoiding Kupka-Smale} if the following conditions hold:
  \begin{itemize}
    \item every critical element (a periodic orbit or singularity) of $X$ outside of $\Gamma$ is hyperbolic;
    \item for any critical elements $c_1$ and $c_2$ of $X$, outside of $\Gamma$, the stable manifold $W^s(c_1,X)$ of $c_1$ is transverse to the unstable manifold $W^u(c_2,X)$ of $c_2$.
  \end{itemize}
\end{definition}
The $\Gamma$-avoiding Kupka-Smale property is fundamental in the study of generic properties of $\xX^r(M,\Gamma)$, as will be seen in Section \ref{sect.genericity}. In particular, we mention that it allows the use of connecting lemma for chains away from $\Gamma$, see Lemma \ref{lem.connecting}.
Denote by $KS(M,\Gamma)$ the set of $\Gamma$-avoiding Kupka-Smale vector fields in $\xX^r(M,\Gamma)$. We have the following result. 
\begin{theorem}
\label{thm.Gamma-KS}
  $KS(M,\Gamma)$ is a residual subset of $\xX^r(M,\Gamma)$.
\end{theorem}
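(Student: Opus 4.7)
The plan is to exhibit $KS(M,\Gamma)$ as a countable intersection of open and dense subsets of $\xX^r(M,\Gamma)$ and invoke the Baire category theorem. Write $K_n = \{x\in M: d(x,\Gamma)\geq 1/n\}$, a compact subset of $M\setminus\Gamma$ whose union over $n$ equals $M\setminus\Gamma$. For positive integers $n,T$ let $\cH_{n,T}\subset\xX^r(M,\Gamma)$ be the set of vector fields for which every singularity in $K_n$ and every periodic orbit contained in $K_n$ of period at most $T$ is hyperbolic, and let $\cT_{n,T}\subset\xX^r(M,\Gamma)$ be the set of vector fields for which, whenever $c_1,c_2$ are two such hyperbolic critical elements, the compact pieces $\phi^X_{[-T,0]}(W^s_{\mathrm{loc}}(c_1,X))$ and $\phi^X_{[0,T]}(W^u_{\mathrm{loc}}(c_2,X))$ meet transversally. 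Unwinding the definitions, $KS(M,\Gamma)=\bigcap_{n,T\geq 1}(\cH_{n,T}\cap\cT_{n,T})$, so it suffices to show that each $\cH_{n,T}$ and $\cT_{n,T}$ is open and dense in $\xX^r(M,\Gamma)$.

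Openness is standard. Hyperbolic critical elements of bounded period persist under $C^r$-perturbation via the implicit function theorem; their local stable and unstable manifolds depend continuously on $X$; and transversality of a finite collection of compact pieces of submanifolds is itself an open condition. Combined with the fact that the set of critical elements of $X$ lying in $K_n$ with period $\leq T$ varies upper semi-continuously, these yield openness of $\cH_{n,T}$ and $\cT_{n,T}$ in the relative topology on $\xX^r(M,\Gamma)$.

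Density is the crux, and my approach is to show that every perturbation used in the classical Kupka-Smale argument (see \cite{PdM}) can be realized with support in a compact subset of $M\setminus\Gamma$, hence is automatically admissible as a perturbation inside $\xX^r(M,\Gamma)$. For $\cH_{n,T}$: any non-hyperbolic critical element $c$ with $\Orb(c)\subset K_n$ satisfies $d(\Orb(c),\Gamma)\geq 1/n>0$, so the standard local perturbation rendering $c$ hyperbolic can be multiplied by a bump function supported in a small neighborhood of $\Orb(c)$ disjoint from $\Gamma$; the resulting $C^r$-small perturbation vanishes on $\Gamma$ and thus lies in $\xX^r(M,\Gamma)\cap\cH_{n,T}$. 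For $\cT_{n,T}$: any non-transverse intersection point $p\in W^s(c_1,X)\cap W^u(c_2,X)$ must lie outside $\Gamma$, because otherwise the invariance of $\Gamma$ would force $c_1=\lim_{t\to+\infty}\phi^X_t(p)\in\Gamma$, contradicting $\Orb(c_1)\subset K_n$; consequently the usual transversalizing perturbation at $p$ can likewise be localized in $M\setminus\Gamma$.

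The principal obstacle is exactly this bookkeeping: confirming that every perturbation required in the classical Kupka-Smale theorem admits a bump-function localization into a compact subset of $M\setminus\Gamma$. The truncation parameter $T$ is introduced expressly for this reason; although the full stable and unstable manifolds of a critical element in $K_n$ may accumulate on $\Gamma$, for each fixed $T$ the truncated pieces appearing in the definition of $\cT_{n,T}$ are compact and, by the same invariance-of-$\Gamma$ argument as above, disjoint from $\Gamma$, hence lie at positive distance from it. Once this localization is in place, the classical perturbation techniques transfer verbatim to $\xX^r(M,\Gamma)$ and deliver density.
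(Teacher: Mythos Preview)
Your proposal is correct and follows the same strategy as the paper: a two-parameter filtration (distance to $\Gamma$ and period/truncation bound) exhibiting $KS(M,\Gamma)$ as a countable intersection of open dense sets, with density coming from the observation that all relevant orbits and truncated invariant manifolds lie at positive distance from $\Gamma$ so that perturbations can be cut off by a bump function. The one difference worth noting is in the execution of density: rather than re-running the classical Kupka--Smale inductive argument and localizing each individual perturbation as you do, the paper invokes the classical theorem as a black box---choosing a genuine Kupka--Smale $Y'$ arbitrarily $C^r$-close to $X$ and gluing via $\hat Y = X + (1-b_{\delta'})(Y'-X)$ for a bump function $b_{\delta'}$ supported near $\Gamma$, so that the truncated invariant manifolds of $\hat Y$ coincide with those of $Y'$ and inherit hyperbolicity and transversality in one stroke.
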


Be aware that generic properties of $\xX^r(M)$ may not hold for any vector field in $\xX^r(M,\Gamma)$. This is because $\xX^r(M,\Gamma)$ is usually a meagre subset of $\xX^r(M)$.
However, as in this theorem, one does expect that generic properties of $\xX^r(M)$ holds also for $\xX^r(M,\Gamma)$, after appropriate change of conditions.

We now focus on the $C^1$ topology. Many of the generic properties of $\xX^1(M)$ are related to periodic orbits and chain transitive sets. 
For example, a well-known generic property of $\xX^1(M)$ states that a non-trivial compact chain-transitive set is approximated by periodic orbits \cite{Cr06}.
We show that this property also holds for generic vector fields in $\xX^1(M,\Gamma)$, in the following sense:
\begin{theorem}\label{thm.genericity}
  There is a residual subset $\cG_1\subset \xX^1(M,\Gamma)$ such that for each $X\in \cG_1$, if $\Lambda$ is a non-trivial compact chain-transitive set and $\Lambda\cap\Gamma=\emptyset$, then $\Lambda$ is the Hausdorff limit of a sequence of periodic orbits of $X$.
\end{theorem}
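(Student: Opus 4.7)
The plan is to adapt Crovisier's argument that, for generic $X\in\xX^1(M)$, every non-trivial chain-transitive set is a Hausdorff limit of periodic orbits, to the relative setting $\xX^1(M,\Gamma)$. The new ingredients are the $\Gamma$-avoiding Kupka-Smale residual from Theorem \ref{thm.Gamma-KS} and the connecting lemma for pseudo-orbits away from $\Gamma$ (Lemma \ref{lem.connecting}). For each $n\geq 1$ denote by $\Gamma_n$ the open $1/n$-neighbourhood of $\Gamma$. Any non-trivial chain-transitive $\La$ disjoint from $\Gamma$ satisfies $\La\cap\overline{\Gamma_n}=\emptyset$ for some $n$, so it suffices to produce, for each $n$, a residual $\cG_1^n\subset\xX^1(M,\Gamma)$ on which every non-trivial compact chain-transitive set inside $M\setminus\overline{\Gamma_n}$ is a Hausdorff limit of periodic orbits; one then sets $\cG_1=KS(M,\Gamma)\cap\bigcap_n\cG_1^n$.

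Define maps $\Phi_n,\Psi_n:\xX^1(M,\Gamma)\to K(K(M))$, where $K(\cdot)$ denotes compact subsets with the Hausdorff metric, by letting $\Phi_n(X)$ be the family of non-trivial compact chain-transitive sets of $X$ contained in $M\setminus\overline{\Gamma_n}$, and $\Psi_n(X)$ be the Hausdorff closure of the family of periodic orbits of $X$ contained in $M\setminus\overline{\Gamma_n}$. Both take values in the compact metric space $K(K(M))$. The map $\Phi_n$ is upper semi-continuous: chain-transitivity and the constraint of lying outside $\Gamma_n$ pass to Hausdorff limits along $X_k\to X$. Restricted to $KS(M,\Gamma)$, the map $\Psi_n$ is lower semi-continuous, since every periodic orbit outside $\Gamma$ of such an $X$ is hyperbolic and therefore persists, inside $\xX^1(M,\Gamma)$, under $C^1$-small perturbations, remaining outside $\overline{\Gamma_n}$. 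The standard Baire-category argument for a pair of semi-continuous set-valued maps into a compact metric space then yields a residual subset $R_n\subset KS(M,\Gamma)$ on which both maps are continuous, and the inclusion $\Phi_n(X)\subset\Psi_n(X)$ propagates by continuity from any $C^1$-dense subset of $\xX^1(M,\Gamma)$ to all of $R_n$; one sets $\cG_1^n:=R_n$.

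For the density step, take $X\in KS(M,\Gamma)$, a non-trivial $\La\in\Phi_n(X)$, and $\vep>0$ small enough that the $\vep$-neighbourhood of $\La$ lies in $M\setminus\overline{\Gamma_{2n}}$; pick an $\vep$-net $\{x_1,\dots,x_k\}\subset\La$. Chain-transitivity yields a finite periodic $\vep$-pseudo-orbit through $x_1,\dots,x_k$ that remains in the $\vep$-neighbourhood of $\La$. Lemma \ref{lem.connecting} then furnishes a $C^1$-small perturbation $Y\in\xX^1(M,\Gamma)$ of $X$ with a periodic orbit $\gamma$ of $Y$ meeting each ball $B(x_i,\vep)$ and staying $O(\vep)$-close to the pseudo-orbit, hence to $\La$; the perturbation supports lie in small flow boxes near the pseudo-orbit jumps, all contained in $M\setminus\Gamma$, so the perturbation is legitimate inside $\xX^1(M,\Gamma)$. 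Letting $\vep\to 0$ exhibits $\La$ as a Hausdorff limit of periodic orbits along a sequence $Y\to X$, establishing density.

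\textbf{Main obstacle.} The chief technical point is the Hausdorff control in the density step: one needs Lemma \ref{lem.connecting} in a quantitative form whose perturbation supports are of diameter $O(\vep)$, so that the closed orbit shadows the entire pseudo-orbit rather than merely passing through the prescribed net. The restriction to $\xX^1(M,\Gamma)$ is then automatic from $d(\La,\Gamma)\geq 1/n$, which keeps the pseudo-orbit, and hence all perturbation supports, uniformly away from $\Gamma$.
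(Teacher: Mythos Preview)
Your overall strategy matches the paper's: both proofs hinge on the $\Gamma$-avoiding Kupka--Smale residual (Theorem~\ref{thm.Gamma-KS}) together with the connecting lemma for pseudo-orbits away from $\Gamma$ (Lemma~\ref{lem.connecting}/Corollary~\ref{cor.chain-transitive}), followed by a Baire-category argument. The paper packages the Baire step via a countable basis $\{V_n\}$ of the hyperspace $\xC$ and the open sets $\cH_{m,n}$ of vector fields possessing a hyperbolic periodic orbit in $V_n$ inside $M\setminus\overline{B_{1/m}(\Gamma)}$; you package it via semi-continuity of set-valued maps $\Phi_n,\Psi_n$ into $K(K(M))$. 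These are standard, interchangeable mechanisms, so the proofs are essentially the same in spirit.

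That said, your write-up has two genuine technical gaps that you should repair. First, $\Phi_n$ is \emph{not} upper semi-continuous as you defined it: the condition ``non-trivial'' is not closed under Hausdorff limits (a sequence of non-trivial chain-transitive sets can collapse onto a periodic orbit), and ``contained in $M\setminus\overline{\Gamma_n}$'' is an open condition, so limits may touch $\partial\Gamma_n$. In fact $\Phi_n(X)$ need not even be closed in $K(M)$, so it does not take values in $K(K(M))$. Second, your density step does not establish density of the set $\{X:\Phi_n(X)\subset\Psi_n(X)\}$; it only shows that for every $X\in KS(M,\Gamma)$ and every $\Lambda\in\Phi_n(X)$ there exist $Y_k\to X$ with periodic orbits $\gamma_k\to\Lambda$. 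This is exactly the statement the paper isolates as Corollary~\ref{cor.chain-transitive}, and it is weaker than the density claim you invoke for the propagation argument.

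The fix is simple and actually makes $\Phi_n$ unnecessary. Define $\Psi_n(X)$ as the Hausdorff closure of the \emph{hyperbolic} periodic orbits of $X$ lying in $M\setminus\Gamma_n$; this is lower semi-continuous on all of $\xX^1(M,\Gamma)$, so it is continuous (in particular upper semi-continuous) on a residual set $R_n$. For $X\in R_n\cap KS(M,\Gamma)$ and any non-trivial chain-transitive $\Lambda\subset M\setminus\Gamma_n$, your density step (equivalently Corollary~\ref{cor.chain-transitive}, with an extra small perturbation to make the closed orbit hyperbolic) yields $Y_k\to X$ and $\gamma_k\in\Psi_n(Y_k)$ with $\gamma_k\to\Lambda$; upper semi-continuity of $\Psi_n$ at $X$ then gives $\Lambda\in\Psi_n(X)$, which is the desired conclusion. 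Your ``Main obstacle'' is exactly the Hausdorff control built into the Bonatti--Crovisier connecting lemma for pseudo-orbits, and is precisely what the paper cites as Corollary~\ref{cor.chain-transitive}.
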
 
Generalizing a $C^1$ generic property obtained in \cite{Wen04}, we can improve Theorem \ref{thm.genericity} as follows.
We say that a compact invariant set $\Lambda$ of $X\in\xX^1(M,\Gamma)$ is an {\em index $i$ fundamental limit within $\xX^1(M,\Gamma)$}, if there exists a sequence of vector fields $X_n\in\xX^1(M,\Gamma)$ converging to $X$ in the $C^1$ topology and each $X_n$ has an index $i$ periodic orbit $\gamma_n$ such that $\gamma_n\to \Lambda$ in the Hausdorff topology. Recall that the index of a hyperbolic periodic orbit $\gamma$ is $\dim W^s(\gamma)-1$, where $W^s(\gamma)$ is its stable manifold.
\begin{theorem}\label{thm.f-limit}
  There is a residual subset $\cG_2\subset \xX^1(M,\Gamma)$ such that for each $X\in \cG_2$, let $\Lambda$ be a compact invariant set, if $\Lambda\cap\Gamma=\emptyset$ and $\Lambda$ is an index $i$ fundamental limit within $\xX^1(M,\Gamma)$, then $\Lambda$ is the Hausdorff limit of a sequence of index $i$ periodic orbits of $X$.
\end{theorem}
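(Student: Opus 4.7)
The plan is to adapt Wen's classical $C^1$ generic argument from \cite{Wen04} to the Baire subspace $\xX^1(M,\Gamma)$. Fix a countable basis $\{U_n\}_{n\ge 1}$ of the topology of $M$, and call a finite sub-collection $\cU=\{U_{n_1},\ldots,U_{n_k}\}$ \emph{admissible} if $\overline{V(\cU)}\cap\Gamma=\emptyset$, where $V(\cU):=U_{n_1}\cup\cdots\cup U_{n_k}$. There are only countably many admissible collections.

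For each admissible $\cU$ define
\[
\cH_i(\cU)=\{X\in\xX^1(M,\Gamma): X \text{ has an index-}i\text{ hyperbolic periodic orbit } \gamma\subset V(\cU) \text{ meeting every } U_{n_j}\}.
\]
Because such a $\gamma$ is hyperbolic and sits inside the compact set $\overline{V(\cU)}$, which is disjoint from $\Gamma$, any sufficiently small $C^1$ perturbation of $X$ with support in a small neighborhood of $\overline{V(\cU)}$ lies in $\xX^1(M,\Gamma)$ and still has an index-$i$ hyperbolic periodic orbit inside $V(\cU)$ meeting each $U_{n_j}$. Hence $\cH_i(\cU)$ is open in $\xX^1(M,\Gamma)$. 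Since the topological boundary of any open set in a topological space is nowhere dense,
\[
\cR(\cU):=\cH_i(\cU)\cup\bigl(\xX^1(M,\Gamma)\setminus\overline{\cH_i(\cU)}\bigr)
\]
is open and dense in $\xX^1(M,\Gamma)$. Set
\[
\cG_2:=\bigcap_{\cU\text{ admissible}}\cR(\cU),
\]
a countable intersection of open dense sets, hence residual in the Baire space $\xX^1(M,\Gamma)$.

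To verify the stated property, fix $X\in\cG_2$ and a compact invariant $\Lambda$ with $\Lambda\cap\Gamma=\emptyset$ which is an index-$i$ fundamental limit within $\xX^1(M,\Gamma)$. For each $\vep>0$, since $\Lambda\cap\Gamma=\emptyset$, pick an admissible $\cU=\{U_{n_1},\ldots,U_{n_k}\}$ with every $U_{n_j}$ of diameter less than $\vep$ and meeting $\Lambda$, with $\cU$ covering $\Lambda$, and with $V(\cU)$ contained in the $\vep$-neighborhood of $\Lambda$. By the fundamental-limit assumption there exist $X_m\to X$ in $\xX^1(M,\Gamma)$ and index-$i$ periodic orbits $\gamma_m$ of $X_m$ with $\gamma_m\to\Lambda$ in Hausdorff topology. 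For $m$ large, $\gamma_m\subset V(\cU)$ and $\gamma_m$ meets every $U_{n_j}$, whence $X_m\in\cH_i(\cU)$. Thus $X\in\overline{\cH_i(\cU)}$, and $X\in\cR(\cU)$ forces $X\in\cH_i(\cU)$. This produces an index-$i$ periodic orbit $\gamma_\vep$ of $X$ whose Hausdorff distance from $\Lambda$ is at most $2\vep$ (every point of $\gamma_\vep$ is $\vep$-close to $\Lambda$, while every point of $\Lambda$ lies in some $U_{n_j}$ which is met by $\gamma_\vep$ and has diameter $<\vep$). Letting $\vep\to 0$ yields the desired sequence of index-$i$ periodic orbits converging to $\Lambda$ in Hausdorff topology.

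The scheme is a textbook Baire category skeleton, so the only place the constraint imposed by $\Gamma$ might cause trouble is the openness of $\cH_i(\cU)$ within $\xX^1(M,\Gamma)$. This is precisely the reason for the admissibility condition $\overline{V(\cU)}\cap\Gamma=\emptyset$: it ensures that the perturbation persisting the hyperbolic periodic orbit can be chosen with support disjoint from a neighborhood of $\Gamma$, and so remain in the constrained space $\xX^1(M,\Gamma)$. With this single observation the proof reduces to the Baire category argument above.
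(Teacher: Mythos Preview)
Your proof is correct and follows essentially the same Baire category skeleton as the paper: the paper indexes its open sets by a countable basis $\{V_n\}$ of the Hausdorff space $\xC$ of compact subsets of $M$ together with a scale $1/m$ (requiring the periodic orbit to lie in $M\setminus\overline{B_{1/m}(\Gamma)}$ and belong to $V_n$), whereas you index by finite admissible open covers $\cU$ of the target set; the two encodings of Hausdorff neighborhoods are interchangeable and the remaining argument is identical.

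One remark: your final paragraph misidentifies the role of the admissibility condition. Openness of $\cH_i(\cU)$ in $\xX^1(M,\Gamma)$ follows directly from robustness of hyperbolic periodic orbits in the ambient space $\xX^1(M)$ and has nothing to do with whether $\overline{V(\cU)}$ meets $\Gamma$; any $Y\in\xX^1(M,\Gamma)$ close to $X$ (with no restriction on support) carries the continuation of $\gamma$. The admissibility condition is only needed in the verification step, so that a compact $\Lambda$ with $\Lambda\cap\Gamma=\emptyset$ can be covered by an admissible $\cU$. This is a cosmetic point and does not affect the validity of your argument.
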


Another classical generic property of $\xX^1(M)$ states that a chain recurrence class containing a periodic orbit coincides with the homoclinic class of the periodic orbit \cite{Cr06}. This property can be localized \cite[Theorem 1.1]{Cr10}: $C^1$ generically, any chain-transitive set $K$ containing a periodic point $P$ is contained in the relative homoclinic class of $P$ in an arbitrarily small neighborhood $U$ of $K$. We generalized this result to $\xX^1(M,\Gamma)$.

\begin{theorem}\label{thm.relative-h-class}
  There is a residual subset $\cG_3\subset\xX^1(M,\Gamma)$ such that for each $X\in\cG_3$, let $\Lambda$ be a compact chain transitive set, if $\Lambda\cap\Gamma=\emptyset$ and $\Lambda$ contain a periodic orbit $\gamma$, then for any neighborhood $U$ of $\Lambda$ one has $\Lambda\subset H(\gamma,U)$. Here, $H(\gamma,U)$ is the relative homoclinic class of $\gamma$ in $U$.
\end{theorem}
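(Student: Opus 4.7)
My plan is to adapt Crovisier's proof \cite[Theorem 1.1]{Cr10} of the corresponding statement in $\xX^1(M)$ to the restricted space $\xX^1(M,\Gamma)$. The hypothesis $\Lambda\cap\Gamma=\emptyset$ is the key that enables this transfer: together with compactness of $\Lambda$ and $\Gamma$, it forces every relevant neighborhood and every perturbation support to be chosen disjoint from $\Gamma$, so all perturbations automatically stay in $\xX^1(M,\Gamma)$.

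The first step is to set up the Baire framework. Fix a countable basis $\{U_n\}_{n\ge 1}$ of open sets of $M$ whose closures are disjoint from $\Gamma$, and another such countable basis $\{V_m\}$. For each tuple $(n,n',\tau,i)$ with $\overline{U_{n'}}\subset U_n$, $\tau\in\mathbb{Z}_{>0}$, $i\in\{0,\dots,\dim M-1\}$, consider the set-valued map
\[
X\;\longmapsto\;\Phi_{n,n',\tau,i}(X)\;:=\;\overline{\bigcup\nolimits_{\gamma}H(\gamma,U_n)}\;\subset\;\overline{U_n},
\]
where the union is over index-$i$ hyperbolic periodic orbits $\gamma\subset U_{n'}$ of $X$ of period at most $\tau$. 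Continuation of hyperbolic periodic orbits and robustness of transverse homoclinic intersections make $\Phi_{n,n',\tau,i}$ lower semi-continuous with respect to the Hausdorff topology; the standard Baire argument then gives a residual set $\cR_{n,n',\tau,i}\subset\xX^1(M,\Gamma)$ of (upper semi-)continuity points. I take
\[
\cG_3\;:=\;KS(M,\Gamma)\,\cap\,\cG_2\,\cap\,\bigcap_{n,n',\tau,i}\cR_{n,n',\tau,i},
\]
possibly intersected with further countably many open-dense sets ensuring that in each precompact open set and each period bound, hyperbolic periodic orbits of $X\in\cG_3$ of bounded period are finitely many (hence isolated).

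Now take $X\in\cG_3$ and let $\Lambda$, $\gamma\subset\Lambda$, $U\supset\Lambda$ be as in the statement, with $\overline{U}\cap\Gamma=\emptyset$. Fix $x\in\Lambda$, let $i$ be the index and $T$ the period of $\gamma$, and set $\tau:=2T$. Choose $U_n$ with $\Lambda\subset U_n\subset\overline{U_n}\subset U$, a small tube $U_{n'}$ around $\gamma$ with $\overline{U_{n'}}\subset U_n$ inside which $\gamma$ is the only hyperbolic periodic orbit of $X$ of period at most $\tau$ (by the isolation property above), and $V_m\ni x$ with $\overline{V_m}\subset U_n$. Chain-transitivity of $\Lambda$ inside $U_n$ supplies $\varepsilon$-pseudo-orbits in $U_n$ from $\gamma$ to $V_m$ and back. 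Since $X$ is $\Gamma$-avoiding Kupka-Smale and all pseudo-orbits lie in $U_n\subset M\setminus\Gamma$, the connecting lemma for pseudo-orbits within $\xX^1(M,\Gamma)$ (Lemma \ref{lem.connecting}) produces $Y\in\xX^1(M,\Gamma)$ arbitrarily $C^1$-close to $X$ with a genuine orbit homoclinic to the continuation $\gamma_Y$, meeting $V_m$, entirely contained in $U_n$. A last small perturbation in $\xX^1(M,\Gamma)$ supported in $U_n$ transversalises this homoclinic orbit. For $Y$ sufficiently $C^1$-close to $X$ one has $\gamma_Y\subset U_{n'}$, hence $\Phi_{n,n',\tau,i}(Y)\cap V_m\neq\emptyset$. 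Upper semi-continuity of $\Phi_{n,n',\tau,i}$ at $X$ forces $\Phi_{n,n',\tau,i}(X)\cap V_m\neq\emptyset$; by the isolation of $\gamma$ this reads $H(\gamma,U_n)\cap V_m\neq\emptyset$. Shrinking $V_m$ down to $\{x\}$ yields $x\in\overline{H(\gamma,U_n)}\subset H(\gamma,U)$.

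The main obstacle I anticipate is the isolation step, namely guaranteeing that $\gamma$ is the unique bounded-period hyperbolic periodic orbit of $X$ in the small tube $U_{n'}$: this must be secured by an auxiliary countable family of open-dense conditions built into $\cG_3$, and one then needs a careful continuation argument tying the orbit realising a point of $\Phi_{n,n',\tau,i}(X)\cap V_m$ back to $\gamma$ itself (using Theorem \ref{thm.f-limit} to control the Hausdorff limits of the approximating index-$i$ periodic orbits). All the remaining ingredients -- the connecting lemma for pseudo-orbits, Kupka-Smale transversalisation, and continuation of hyperbolic periodic orbits -- are purely local, supported in $U_n\subset M\setminus\Gamma$, and therefore fall inside $\xX^1(M,\Gamma)$ for free by virtue of $\Lambda\cap\Gamma=\emptyset$.
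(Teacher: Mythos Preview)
Your proposal is correct and follows Crovisier's scheme in \cite{Cr10}, but it takes a genuinely different route from the paper. The paper first proves the intermediate inclusion $\Lambda\subset\overline{W^u_U(\gamma,X)}\cap\overline{W^s_U(\gamma,X)}$ on a residual set where the closures of the invariant manifolds of each hyperbolic periodic orbit vary continuously: it approximates $\Lambda$ by periodic orbits via Theorem~\ref{thm.genericity} and then uses the Arnaud/Wen--Xia connecting lemma (Lemma~\ref{lem.connecting-wx}) to push points of $W^u_U(\gamma)$ near any $z\in\Lambda$; only after that does a second application of Lemma~\ref{lem.connecting-wx} create a transverse homoclinic point near $z$. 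You instead encode the target object directly --- semicontinuity of the relative homoclinic class indexed by period, index, and a tube $U_{n'}$ --- and invoke the Bonatti--Crovisier pseudo-orbit connecting lemma (Lemma~\ref{lem.connecting}) in one step to manufacture the homoclinic orbit for nearby $Y$.

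The trade-off is this: your route is conceptually cleaner, but it leans on a stronger form of the connecting lemma than the bare statement of Lemma~\ref{lem.connecting} (you need the orbit to be homoclinic to the continuation $\gamma_Y$, to pass through $V_m$, and to remain in $U_n$ --- this is in \cite{BC,Cr06} but not literally in Lemma~\ref{lem.connecting}), and you must handle the isolation issue you flag, which is a genuine nuisance. The paper sidesteps that issue entirely by doing Baire on the closures $\overline{W^{u/s}(\gamma,X)}$ of a \emph{given} periodic orbit rather than on a union over all bounded-period orbits in a tube; the price is the extra intermediate step through $\overline{W^u_U}\cap\overline{W^s_U}$ and the use of Theorem~\ref{thm.genericity} as an input.
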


We shall see from the proof of these theorems that a bulk of generic properties of $\xX^r(M)$ can be generalized to $\xX^r(M,\Gamma)$ almost immediately. A natural question goes as follows.
\begin{question}
  Suppose that a property $\xP$ is stated for all critical elements (or compact chain transitive sets, homoclinic classes, etc.), and suppose $\xP$ holds generically in $\xX^r(M)$. Is it true that the same property $\xP$ holds also generically in $\xX^r(M,\Gamma)$, as long as one considers only the critical elements (compact chain transitive sets, homoclinic classes, etc.) outside of $\Gamma$?
\end{question}
Sadly, the answer to this question is in the negative. The reason is that the property $\xP$ for a specific object may get involved with dynamics inside $\Gamma$, although the object itself is away from $\Gamma$. For example, Theorem \ref{thm.relative-h-class} cannot be stated as the following, which is false in general: for a generic vector field $X\in\xX^1(M,\Gamma)$, for any periodic orbit $\gamma$ of $X$, if $\gamma\cap\Gamma=\emptyset$, then the chain recurrence class containing $\gamma$ coincides with the homoclinic class of $\gamma$.

In the spirit of Question 1, we have the following principle: {\em suppose that a property $\xP$ is stated for all critical elements (or compact chain transitive sets, homoclinic classes, etc.) and $\xP$ holds generically in $\xX^r(M)$, then the property $\xP$ holds also generically in $\xX^r(M,\Gamma)$, as long as one considers $\xP$ only for the critical elements (compact chain transitive sets, homoclinic classes, etc.) such that the involved dynamics is away from $\Gamma$.}
For instance, it can be show that for a generic vector field $X\in\xX^1(M,\Gamma)$, any homoclinic class $\Lambda$ away from $\Gamma$ is index-complete: if $\Lambda$ contains periodic orbits of both index $i$ and index $j$, $i<j$, then it contains periodic orbits of index $k$ for every integer $k\in[i,j]$. This corresponds to a classical generic property in $\xX^1(M)$ obtained in \cite{ABCDW}.

\subsection{Application to codimension one partially hyperbolic dynamics}

In this paper, we shall content ourselves with the generic results stated in the theorems above and apply them to codimension one partially hyperbolic dynamics.
Our goal is the following theorem, which generalizes a key result of Gan-Yang \cite[Theorem 4.4]{GY} from dimension 3 to higher dimensions\footnote{It is possible to prove a stronger result: under the assumptions of Theorem \ref{thm.h-class}, $C(\sigma)$ is singular hyperbolic. By the theorem, it remains to show that $C(\sigma)$ is singular hyperbolic {\em with the knowledge that it contains a periodic orbit}. This has been done by Bonatti-Gan-Yang \cite{BGY} for three dimensional flows.}.
\begin{theorem}\label{thm.h-class}
  Let $X\in\xX^1(M)$ be a $C^1$ generic vector field. Suppose $C(\sigma)$ is a non-trivial chain recurrence class of a singularity $\sigma$ satisfying the following conditions:
  \begin{itemize}
    \item all singularities in $C(\sigma)$ are Lorenz-like;
    \item there is a partially hyperbolic splitting $T_{C(\sigma)}M=E^{s}\oplus F$ with respect to the tangent flow, where $E^{s}$ is contracting and $\dim F=2$.
  \end{itemize}
  Then $C(\sigma)$ is a homoclinic class.
\end{theorem}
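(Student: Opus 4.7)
The plan is to reduce Theorem \ref{thm.h-class} to the existence of a single periodic orbit inside $C(\sigma)$. A classical $C^1$ generic property of $\xX^1(M)$ asserts that any chain recurrence class of a $C^1$ generic vector field that contains a periodic orbit coincides with the homoclinic class of that orbit. Consequently, once I exhibit a periodic orbit $\gamma \subset C(\sigma)$, I obtain $C(\sigma) = C(\gamma) = H(\gamma)$, and the theorem follows. All the real work will lie in producing such a $\gamma$.

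To produce $\gamma$, I exploit the Lorenz-like structure at $\sigma$ and the codimension-one partially hyperbolic splitting $E^s \oplus F$ with $\dim F = 2$. Together they force $\sigma$ to have a one-dimensional unstable manifold tangent to $F$; non-triviality of $C(\sigma)$ combined with chain transitivity implies that at least one branch $\ell$ of $W^u(\sigma)\setminus\{\sigma\}$ lies in $C(\sigma)$ and that pseudo-orbits inside $C(\sigma)\setminus\{\sigma\}$ return from $\ell$ arbitrarily close to $\sigma$. I then move to the space $\xX^1(M,\Gamma)$ with $\Gamma = \Sing(X)\cap C(\sigma)$, so that all Lorenz-like singularities of the class (in particular $\sigma$ together with its eigenstructure) are preserved under perturbation. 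The $\Gamma$-avoiding Kupka-Smale property of Theorem \ref{thm.Gamma-KS} powers a connecting lemma for chains disjoint from $\Gamma$, which I apply to turn the above pseudo-orbits into a genuine homoclinic loop of $\sigma$ for some nearby $X'\in \xX^1(M,\Gamma)$. The classical Poincar\'e-map analysis at a Lorenz-like homoclinic loop, carried out within $\xX^1(M,\Gamma)$, then produces, after a further small perturbation, a periodic orbit of index $\dim E^s$ lying in a prescribed Hausdorff neighborhood of a compact invariant piece $\Lambda \subset C(\sigma)\setminus\Gamma$. Iterating inside shrinking $C^1$-neighborhoods of $X$ exhibits $\Lambda$ as an index-$\dim E^s$ fundamental limit within $\xX^1(M,\Gamma)$; Theorem \ref{thm.f-limit} applied to the generic $X$ itself then yields a sequence of index-$\dim E^s$ periodic orbits of $X$ Hausdorff-converging to $\Lambda$, and upper semi-continuity of chain recurrence classes forces these periodic orbits to lie in $C(\sigma)$ for large $n$, producing the desired $\gamma$.

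The main obstacle will be the unfolding step: producing a periodic orbit from a Lorenz-like homoclinic loop within $\xX^1(M,\Gamma)$ in arbitrary ambient dimension. In dimension three this is carried out in Gan-Yang \cite[Theorem 4.4]{GY}, where the two-dimensional cross-section makes the Poincar\'e return map tractable. Here the cross-section has dimension $\dim M - 1 \geq 3$, so the codimension-one partial hyperbolicity $E^s \oplus F$ must be used to reduce the essential return dynamics to a two-dimensional quotient along $F$, and I will have to verify that the newly created periodic orbits inherit a dominated splitting extending $E^s\oplus F$ (rather than being tangential or of the wrong index). A subsidiary difficulty is handling several Lorenz-like singularities in $C(\sigma)$ simultaneously, which is exactly what the choice $\Gamma = \Sing(X)\cap C(\sigma)$ and the formalism built in Section \ref{sect.genericity} are designed to accommodate.
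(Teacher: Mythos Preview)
Your proposal has a genuine gap at precisely the step you flag as the main obstacle, and the difficulty is deeper than the dimensional reduction you describe. A periodic orbit born from unfolding a homoclinic loop of $\sigma$ shadows that loop, and any such loop is a full orbit whose closure contains $\sigma\in\Gamma$; hence the bifurcated periodic orbits cannot lie in a Hausdorff neighborhood of a fixed compact invariant $\Lambda\subset C(\sigma)\setminus\Gamma$, and Theorem~\ref{thm.f-limit} (which requires $\Lambda\cap\Gamma=\emptyset$) does not apply. You never specify $\Lambda$, and there is no natural candidate: the homoclinic loop itself is not contained in $C(\sigma)\setminus\Gamma$, nor is its closure. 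Moreover, the Lorenz-like condition $\lambda_s+\lambda_{s+1}>0$ means the saddle value is \emph{positive}, so the ``classical Poincar\'e-map analysis'' does not yield a single controllable periodic saddle upon unfolding; this is exactly the expanding regime of Lorenz-like chaos, and is the reason \cite{GY} already needed a far more elaborate argument in dimension three.

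The paper's proof is structurally different. It argues by contradiction: assuming $C(\sigma)$ is aperiodic, $C(\sigma)$ cannot be singular hyperbolic (else Theorem~\ref{thm.yang} produces a periodic orbit), so it contains an $\cN$-set, i.e.\ a minimally non-singular-hyperbolic subset. One then takes $\Gamma$ to be the closure of a \emph{maximal collection of homoclinic orbits} of the singularities (not merely the singularities), passes to $\xX^1(M,\Gamma)$, and shows via Theorem~\ref{thm.codim-one} that any $\cN$-set for a generic $Y\in\xX^1(M,\Gamma)\cap\cV_X$ must contain a singularity (Lemma~\ref{lem.existence-of-singularity}): a nonsingular $\cN$-set would be codimension-one partially hyperbolic away from $\Gamma$, forcing periodic sinks to accumulate on the Lyapunov stable class $C(\sigma_Y,Y)$. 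The final contradiction comes from the cross-section system and the ``infinitesimal $cu$ adapted return'' machinery of \cite[Section~6]{GY} (Propositions~\ref{prop.no-adapted-return} and \ref{prop.exist-adapted-return}). Your plan bypasses Theorem~\ref{thm.codim-one} entirely, but that theorem is the essential new ingredient allowing the argument of \cite{GY} to go through in higher dimensions without the $C^2$ hypothesis.
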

Here, Lorenz-like singularities are defined as follows.
\begin{definition}[Lorenz-like singularity]\label{def:lorenz-like}
	Let $X\in\xX^1(M)$ and $\si$ be a hyperbolic singularity of $X$. Assume that $C(\si)$ is non-trivial and the Lyapunov exponents of $D\phi_t|_{T_{\si}M}$ are $\lambda_1\leq\lambda_2\leq\cdots\leq\lambda_s<0<\lambda_{s+1} \leq\cdots\leq\lambda_{\dim M}$.
	The singularity $\si$ is said to be {\em Lorenz-like}, if the following conditions are satisfied:
	\begin{itemize}
		\item $\lambda_s+\lambda_{s+1}>0$, $s>1$ and $\lambda_{s-1}<\lambda_s$;
		\item Let $E^{ss}$ be the invariant subspace corresponding to $\lambda_1,\cdots,\lambda_{s-1}$, and $W^{ss}(\si)$  the strong stable manifold corresponding to $E^{ss}$, then $W^{ss}(\si)\cap C(\si)=\{\si\}$.
	\end{itemize}
\end{definition}

\begin{remark}\label{rmk.h-class}
  In the setting of Theorem \ref{thm.h-class}, one has that $C(\sigma)$ is  Lyapunov stable: the codimension 2 partially hyperbolic splitting implies that the unstable manifold of $\sigma$ is one-dimensional and at least one separatrix of the unstable manifold is contained in $C(\sigma)$, then $C(\sigma)$ is Lyapunov stable by genericity \cite[Appendix A]{APa}.

  It is shown by Pacifico, F. Yang and J. Yang \cite{PYY} that for a $C^1$ generic vector field, if a non-trivial chain recurrence class is Lyapunov stable and singular hyperbolic, then it is a homoclinic class. See also \cite{ALM}.
\end{remark}

For the proof of Theorem \ref{thm.h-class}, which follows the argument in \cite[Section 6]{GY}, we will take $\Gamma$ to be a union of homoclinic orbits of singularities and consider generic properties in $\xX^1(M,\Gamma)$. Precisely, we have the following result,  which is a generalization of a main result (Lemma 5.1) in \cite{Y11} to flows.

\begin{theorem}\label{thm.codim-one}
  There exists a residual subset $\cG\subset\xX^1(M,\Gamma)$ such that for every $X\in\cG$ and any compact chain-transitive set $\Lambda$, if $\Lambda\cap(\Gamma\cup\Sing(X))=\emptyset$ and its normal bundle admits a partially hyperbolic splitting $\cN_{\Lambda}=N^s\oplus N^c$ with $N^s$ contracting and $\dim N^c=1$, then either $\Lambda$ is a hyperbolic set, or it is not hyperbolic and is contained in a chain-transitive set $\Lambda'$ which contains a periodic saddle of index $\dim M-2$.

  In the latter case, there are periodic sinks contained in the closure of the unstable set of $\Lambda'$ and arbitrarily close to $\Lambda'$. Furthermore, if $\Lambda$ is minimally nonhyperbolic, it is the Hausdorff limit of periodic sinks.
\end{theorem}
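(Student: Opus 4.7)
The plan is to adapt the strategy of \cite{Y11} (which establishes the analogous dichotomy for diffeomorphisms) to vector fields, working throughout inside $\xX^1(M,\Gamma)$. The crucial feature is that $\Lambda$ is disjoint from $\Gamma\cup\Sing(X)$, so every $C^1$ perturbation used below can be supported in a small neighborhood of $\Lambda$ avoiding $\Gamma$ and therefore stays in $\xX^1(M,\Gamma)$. This gives access to the $\Gamma$-avoiding Kupka--Smale property (Theorem \ref{thm.Gamma-KS}), periodic approximation of chain-transitive sets (Theorem \ref{thm.genericity}), the fundamental-limit property (Theorem \ref{thm.f-limit}), the localized homoclinic class property (Theorem \ref{thm.relative-h-class}), the $\Gamma$-avoiding connecting lemma (Lemma \ref{lem.connecting}), and the flow version of Franks' lemma (as employed in \cite{GY}). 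I let $\cG$ be the intersection of $KS(M,\Gamma)$ with $\cG_1,\cG_2,\cG_3$ and a countable family of continuity points for the semicontinuous maps attached to dominated splittings, the linear Poincar\'e flow, and chain recurrence classes.

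Fix $X\in\cG$ and $\Lambda$ satisfying the hypotheses. If $\Lambda$ is hyperbolic we are done, so assume not; since $N^s$ is uniformly contracting, the obstruction must be the failure of uniform expansion on the one-dimensional center $N^c$. By Theorem \ref{thm.genericity}, $\Lambda$ is the Hausdorff limit of periodic orbits $\gamma_n$ of $X$; for $n$ large these inherit the extended splitting $N^s\oplus N^c$ on a neighborhood of $\Lambda$, and a Ma\~n\'e--Pliss argument applied to the linear Poincar\'e flow along the $\gamma_n$ forces the center Lyapunov exponents $\lambda^c(\gamma_n)$ to fail to be uniformly bounded below by any positive constant. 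Choose a subsequence with $\lambda^c(\gamma_n)\to 0$. A flow Franks perturbation supported in a thin tubular neighborhood of $\gamma_n$ (disjoint from $\Gamma\cup\Sing(X)$) then tunes the linear Poincar\'e map on $N^c$: making $\lambda^c$ slightly positive keeps $\gamma_n$ a hyperbolic saddle of index $\dim M-2$, while making it slightly negative converts $\gamma_n$ into a hyperbolic sink. In particular, for every $\vep>0$ there exists $Y\in\xX^1(M,\Gamma)$ that is $\vep$-$C^1$-close to $X$ and has an index $\dim M-2$ periodic orbit within Hausdorff distance $\vep$ of $\Lambda$. Hence $\Lambda$ is an index $\dim M-2$ fundamental limit within $\xX^1(M,\Gamma)$, and Theorem \ref{thm.f-limit} yields, for $X$ itself, a sequence of index $\dim M-2$ periodic saddles $p_n\to\Lambda$ in the Hausdorff topology. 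Taking $\Lambda'$ to be the chain recurrence class of a $p_n$ sufficiently close to $\Lambda$ (equivalently, the relative homoclinic class supplied by Theorem \ref{thm.relative-h-class}) produces the desired chain-transitive superset of $\Lambda$ containing a saddle of index $\dim M-2$.

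For the remaining assertions, the opposite sign in the Franks perturbation yields periodic sinks $q_n$ born from weak hyperbolic orbits in, or $C^0$-close to, $\Lambda'$; each such $q_n$ lies on the continuation of a one-dimensional center-unstable branch of the approximating saddle in $\Lambda'$, so the family $\{q_n\}$ accumulates inside $\overline{W^u(\Lambda')}$. If $\Lambda$ is minimally nonhyperbolic, every proper compact invariant subset of $\Lambda$ is hyperbolic, which forces $\lambda^c(\gamma_n)\to 0$ for the full sequence of approximating periodic orbits; the associated sinks then Hausdorff-converge to $\Lambda$ itself. The main technical hurdle is realizing the Franks-type adjustment of the linear Poincar\'e map as a bona fide $C^1$ perturbation of the vector field supported in a prescribed open set disjoint from $\Gamma$: this is exactly the flow Franks' lemma invoked as in \cite{GY}, and the localization away from $\Gamma$ ensures the perturbed field remains in $\xX^1(M,\Gamma)$ so that the generic tools above stay in force.
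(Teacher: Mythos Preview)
There is a genuine gap in the step where you pass from the index $\dim M-2$ saddles $p_n$ converging to $\Lambda$ in the Hausdorff topology to a chain-transitive set $\Lambda'$ containing both $\Lambda$ and some $p_n$. Hausdorff proximity of $\mathrm{Orb}(p_n)$ to $\Lambda$ does not, by itself, put $p_n$ in the chain recurrence class of $\Lambda$: one needs $\varepsilon$-chains for \emph{every} $\varepsilon>0$, whereas closeness only supplies chains for $\varepsilon$ on the order of $d_H(\mathrm{Orb}(p_n),\Lambda)$, and a single $p_n$ cannot serve for all $\varepsilon$. Your appeal to Theorem~\ref{thm.relative-h-class} is a misapplication, since that result requires the periodic orbit to already lie \emph{inside} the chain-transitive set. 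In the paper this is precisely where the central-model machinery of \cite{Cr10,XZ} enters: after reducing by Zorn's lemma to a minimally nonhyperbolic $\Lambda$ (which Lemma~\ref{lem.sinks-accumulation} shows is a Hausdorff limit of sinks, via the Ergodic Closing Lemma on a non-expanding measure rather than a Ma\~n\'e--Pliss argument on the orbits from Theorem~\ref{thm.genericity}, which are not in $\Lambda$), one builds central models over $\Lambda$ and splits according to whether they have chain-recurrent central segments, trapping strips, or expanding strips. The chain-recurrent segment case (Lemma~\ref{lem.recurrent-segments}) produces an honest chain-transitive enlargement of $\Lambda$ containing the image of the segment, to which Theorem~\ref{thm.genericity} can then be applied and the resulting periodic orbits are linked to $\Lambda$ through the local strong stable lamination; the expanding-strip case (Lemma~\ref{lem.expanding-both-sides}) establishes explicit homoclinic relations among the approximating saddles, embedding $\Lambda$ in a homoclinic class; the trapping-strip case is ruled out (Lemma~\ref{lem.trapping-strips}).

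Your assertion that periodic sinks lie in $\overline{W^u(\Lambda')}$ is likewise only a heuristic: the sinks you produce by the negative-sign Franks perturbation are periodic orbits of nearby vector fields, and there is no mechanism in your outline placing sinks of $X$ itself on the unstable set of the saddle $p_n$ of $X$. In the paper this follows from the one-dimensional dynamics on periodic central curves (Lemmas~\ref{lem.periodic-c-curve} and~\ref{lem.periodic-c-curve-general}), which arrange sinks and saddles alternately along the central segments in the strips, so that sinks of $X$ appear explicitly on the unstable manifolds of the saddles in $\Lambda'$.
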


\begin{remark}
  In the latter case, the chain-transitive set $\Lambda'$ is contained in a relative homoclinic class in a small neighborhood of $\Lambda$. This is given by Theorem \ref{thm.relative-h-class}.
\end{remark}

\begin{remark}\label{rmk.codim-one-1}
  The theorem allows us to bypass a $C^2$ argument in the original proof of Gan-Yang \cite[Section 6]{GY}.
  More precisely, they use the following result in \cite{ARH,PS00}: suppose $\dim M=3$ and let $X\in\xX^2(M)$, if $\Lambda$ is a compact invariant transitive set of $X$ such that
  \begin{itemize}
    \item $\Lambda$ contains no singularities;
    \item the normal bundle $\cN_{\Lambda}$ admits a dominated splitting with respect to the linear Poincar\'e flow;
    \item every periodic orbit in $\Lambda$ is hyperbolic, but $\Lambda$ is not hyperbolic;
  \end{itemize}
  then $\Lambda$ is a normally hyperbolic 2-dimensional torus, restricted to which the flow is equivalent to an irrational flow.

  A generalization of this result to codimension one partially hyperbolic dynamics on higher dimensional manifolds is long-desired, yet a rigorous proof is not available. We would like to mention that for diffeomorphisms, important progresses have been obtained by Pujals-Sambarino \cite{PS07,PS09}. For instance, they proved in \cite{PS09} that if a homoclinic class $H(p)$ of a $C^2$ diffeomorphism admits a codimension one partially hyperbolic splitting and periodic orbits in the class are all hyperbolic, then $H(p)$ is hyperbolic.
\end{remark}

Theorem \ref{thm.codim-one} gives a dichotomy of hyperbolicity and Newhouse phenomenon \cite{Ne70,Ne74} for dynamics admitting a codimension-one partially hyperbolic splitting.
Note that if $\Gamma=\emptyset$, we obtain a generic result in $\xX^1(M)$.

The rest of the paper is organized as follows:
we study generic properties of $\xX^r(M,\Gamma)$ in Section \ref{sect.genericity} and prove Theorem \ref{thm.Gamma-KS} $\sim$ \ref{thm.relative-h-class}; then in Section \ref{sect.codim-one}, we apply these results to codimension one partially hyperbolic dynamics and prove Theorem \ref{thm.codim-one}; finally, we prove Theorem \ref{thm.h-class} in Section \ref{sect.pf-of-h-class}.

\section{Generic properties of $\xX^r(M,\Gamma)$}\label{sect.genericity}
We introduce in this section the fundamentals of flows and prove the $\Gamma$-avoiding Kupka-Smale property and other generic properties of $\xX^r(M,\Gamma)$ stated in the first section.

Let $X\in\xX^r(M)$. The vector field generates a $C^r$ flow on $M$, which will be denoted by $\phi^X_t$, or simply, $\phi_t$.
The flow $\phi_t$ induces a tangent flow on the tangent bundle $TM$, denoted by $\Phi_t=D\phi_t$.
A point $x\in M$ is called a {\em singularity} if $X(x)=0$. Otherwise, it is called {\em regular}.
We denote by $\Sing(X)=\{x\in M: X(x)=0\}$ the set of singularities of $X$.
If $x$ is regular and there exists $t>0$ such that $\phi_t(x)=x$, then $x$ is called a {\rm periodic point} and its {\em period} is defined to be the minimum positive $t$ such that $\phi_t(x)=x$. The set of periodic points of $X$ is denoted by $\Per(X)$.
A {\em critical element} of $X$ is either a singularity or a periodic orbit of $X$.

For a singularity $\sigma$, one says that $\sigma$ is hyperbolic if $\Phi_1|_{T_{\sigma}M}$ is nondegenerate and has no eigenvalue on the unit circle. Its index is defined to be the number of eigenvalues (counting multiplicity) contained in the unit disk.
For a periodic point $p$, we denote by $\tau(p)$ its period. The periodic point $p$, or its orbit $\Orb(p)$, is called {\em hyperbolic} if $\Phi_{\tau(p)}|_{T_pM}$ has no eigenvalue on the unit circle except the one corresponding to the flow direction.

As in the introduction, we shall always denote by $\Gamma$ as a $\phi^{X_0}_t$-invariant compact subset of $M$, where $X_0\in\xX^r(M)$ is a given vector field.
Let us denote
\[\xX^r(M,\Gamma)=\{X\in\xX^r(M): X|_{\Gamma}=X_0|_{\Gamma}\},\]

\subsection{$\Gamma$-avoiding Kupka-Smale property}

This subsection provides a proof of Theorem \ref{thm.Gamma-KS}, i.e. $\Gamma$-avoiding Kupka-Smale property is generic in $\xX^r(M,\Gamma)$.
Before the proof, we would like to mention the second author's master thesis \cite{Xi}, in which Theorem \ref{thm.Gamma-KS} first appeared. Xi provided a sketch of the proof after proving its diffeomorphism version. Our proof will follow closely this sketch.
For simplicity, we will regard singularities as critical elements of period 0.

For any $T>0$ and $\delta>0$, let $\xX^r_T(M,\Gamma,\delta)$ be the set of vector fields in $\xX^r(M,\Gamma)$ such that every critical element lying outside of $B_{\delta}(\Gamma)$ and having period $\leq T$ is hyperbolic.
Here, $B_{\delta}(\Gamma)$ denotes the open $\delta$-neighborhood of $\Gamma$, and a critical element is said to be lying outside of $B_{\delta}(\Gamma)$ if it does not intersect $B_{\delta}(\Gamma)$.

\begin{lemma}\label{lem.h-critical-elements}
  $\xX^r_T(M,\Gamma,\delta)$ is open and dense in $\xX^r(M,\Gamma)$.
\end{lemma}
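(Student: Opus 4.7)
The plan is to prove openness and density separately, adapting the classical Kupka--Smale argument to the constraint that admissible perturbations must fix $X|_\Gamma$. For openness I would fix $X\in\xX^r_T(M,\Gamma,\delta)$ and observe that the compact set $K:=M\setminus B_\delta(\Gamma)$ meets only finitely many critical elements of $X$ of period at most $T$, since each is hyperbolic and hence isolated. Call them $\sigma_1,\dots,\sigma_m$ and $\gamma_1,\dots,\gamma_n$. The implicit function theorem---applied to $Y\mapsto Y(x)$ at each singularity and to $(Y,x)\mapsto\phi^Y_\tau(x)-x$ on a transverse section near each periodic orbit---shows that for $Y$ sufficiently $C^r$-close to $X$ within $\xX^r(M,\Gamma)$, each $\sigma_i$ and $\gamma_j$ perturbs to a unique nearby hyperbolic critical element of $Y$, and no other critical element of period at most $T+1$ appears in a small tubular neighborhood. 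To rule out extra non-hyperbolic critical elements appearing elsewhere in $K$, I argue by contradiction: if $Y_n\to X$ and each $Y_n$ has a non-hyperbolic critical element $c_n\subset K$ of period $\tau_n\le T$ avoiding a fixed neighborhood of $\bigcup\sigma_i\cup\bigcup\gamma_j$, then along a subsequence $c_n$ Hausdorff-converges to a compact set $c_\infty\subset K$ and $\tau_n\to\tau_\infty\in[0,T]$. The case $\tau_\infty>0$ produces a non-hyperbolic periodic orbit of $X$ in $K$ of period $\tau_\infty$, contradicting $X\in\xX^r_T(M,\Gamma,\delta)$; the case $\tau_\infty=0$ forces $c_n$ to collapse onto a point $p\in K$ with $X(p)=0$, which is then a hyperbolic singularity of $X$ by hypothesis, contradicting the Hartman--Grobman absence of small periodic orbits near $p$ for nearby $Y_n$.

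For density I would fix $X\in\xX^r(M,\Gamma)$ and $\epsilon>0$, choose $0<\delta'<\delta$, and work within the Banach space of $C^r$ perturbations of $X$ supported in $M\setminus\overline{B_{\delta'}(\Gamma)}$; such perturbations automatically preserve the constraint on $\Gamma$. Singularities are handled first: $\Sing(X)\cap K$ is compact and covered by finitely many coordinate charts disjoint from $\Gamma$, in each of which a $C^r$-small local perturbation makes $DX$ nondegenerate and hyperbolic at every singularity in $K$. Periodic orbits of period at most $T$ in $K$ are handled next by adapting the Peixoto--Pugh step of the Kupka--Smale theorem: transverse sections covering $K\setminus\Sing(X)$ encode such orbits as periodic points of a bounded number of iterates of the Poincar\'e return maps, and a Sard-type transversality argument applied to $(Y,p,\tau)\mapsto\phi^Y_\tau(p)-p$ on the admissible perturbation space yields, after a small admissible perturbation, only finitely many such orbits in $K$; a further $C^r$-small perturbation supported in pairwise disjoint tubular neighborhoods of these orbits, each chosen to miss $\overline{B_{\delta'}(\Gamma)}$, makes every one of them hyperbolic.

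The main obstacle is verifying that the localization lemmas of Pugh and Peixoto go through within the constrained perturbation class. This is possible because every critical element of period at most $T$ in $K$ lies at a definite positive distance from $\Gamma$, so its tubular neighborhood can be chosen to be contained in $M\setminus\overline{B_{\delta'}(\Gamma)}$, making every local perturbation admissible; the classical machinery then applies verbatim. This uniform separation from $\Gamma$ is what makes the adaptation straightforward once the standard Kupka--Smale argument is set up in the reduced perturbation class.
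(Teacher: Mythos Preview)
Your proposal is correct, and the openness half is organized much like the paper's (persistence of the finitely many hyperbolic critical elements in $K=M\setminus B_\delta(\Gamma)$ plus a compactness argument to exclude new ones), with one quibble: in your limiting step you assert that the Hausdorff limit $c_\infty$ is itself a \emph{non-hyperbolic} periodic orbit of $X$, but non-hyperbolicity need not pass to the limit. The contradiction is actually simpler than you state: $c_\infty\subset K$ is a critical element of $X$ of period $\le T$ that avoids the fixed neighborhoods of the $\sigma_i,\gamma_j$, yet by construction those exhaust all such critical elements. No appeal to the hyperbolicity (or lack thereof) of $c_\infty$ is needed.

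For density you take a genuinely different route. You redo the Peixoto--Pugh machinery inside the constrained perturbation class supported away from $\overline{B_{\delta'}(\Gamma)}$; this works because every critical element you need to touch lies at distance $\ge\delta>\delta'$ from $\Gamma$, so all the local perturbations are admissible. The paper bypasses this entirely with a bump-function trick: fix a $C^\infty$ function $b_\delta$ equal to $1$ on $\Gamma$ and $0$ outside $B_\delta(\Gamma)$, choose any $Y$ in the classically open-and-dense set $\xX^r_T(M)\subset\xX^r(M)$ that is $C^r$-close to $X$, and put $\hat Y=X+(1-b_\delta)(Y-X)$. Then $\hat Y|_\Gamma=X|_\Gamma$ while $\hat Y=Y$ on $M\setminus B_\delta(\Gamma)$, so $\hat Y\in\xX^r_T(M,\Gamma,\delta)$ immediately, and $\|\hat Y-X\|_{C^r}\le\|1-b_\delta\|_{C^r}\cdot\|Y-X\|_{C^r}$ can be made arbitrarily small. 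Your approach is self-contained and does not invoke the unconstrained Kupka--Smale theorem; the paper's is a two-line reduction to that theorem and is considerably shorter.
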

\begin{proof}
  Let us show firstly that $\xX^r_T(M,\Gamma,\delta)$ is open in $\xX^r(M,\Gamma)$. Let $X\in\xX^r_T(M,\Gamma,\delta)$. Since hyperbolic critical elements are isolated, there exist only finitely many critical elements of $X$ lying outside of $B_{\delta}(\Gamma)$ and having period $\leq T$. 
  For any point $x\in M\setminus B_{\delta}(\Gamma)$, there are the following cases to consider:
  \begin{enumerate}[(a)]
    \item $x$ is a singularity of $X$;
    \item $\mathrm{Orb}(x)$ is a periodic orbit lying outside of $B_{\delta}(\Gamma)$ and having period $\leq T$;
    \item $\mathrm{Orb}(x)$ is a periodic orbit with period $\leq T$, but it intersects $B_{\delta}(\Gamma)$;
    \item $\mathrm{Orb}(x)$ is not periodic or it is periodic with period $>T$.
  \end{enumerate}

  In case (a), by hyperbolicity of the singularities, there is a neighborhood $U_x\subset M$ and a neighborhood $\cU_x\subset\xX^r(M)$ of $X$ such that every vector field $Y\in\cU_x$ has only one singularity in $U_x$ and it is hyperbolic. Moreover, any periodic orbit of $Y$ that intersects $U_x$ has period $>T$. See \cite[Chapter 3, Lemma 2.1]{PdM}.

  In case (b), the periodic orbit $\mathrm{Orb}(x)$ is hyperbolic. By hyperbolicity, there exists a neighborhood $U_x\subset M$ of $\mathrm{Orb}(x)$ and $\cU_x\subset\xX^r(M)$ such that any $Y\in\cU_x$ has only one periodic orbit $\gamma_Y$ in $U_x$. Moreover, $\gamma_Y$ is hyperbolic and all other periodic orbits of $Y$ that intersects $U_x$ have period $>T$. Furthermore, shrinking $U_x$, $Y$ has no singularities in $U_x$.

  In case (c), by the Tubular Flow Theorem, there is a neighborhood $U_x\subset M$ of $x$ such that any periodic orbit of $X$ that intersects $\overline{U_x}$ will also intersect $B_{\delta}(\Gamma)$. As the flow depends continuously on the vector field, there exists a neighborhood $\cU_x\subset \xX^r(M)$ of $X$ such that the same property holds for all $Y\in\cU_x$. Moreover, we can assume that each $Y\in\cU_x$ has no singularities in $U_x$.

  In case (d), by the Tubular Flow Theorem, there is a neighborhood $U_x\subset M$ of $x$ such that any periodic orbit of $X$ that intersects $\overline{U_x}$ has period $>T$ and $X$ has no singularities in $\overline{U_x}$. It follows that there exists a neighborhood $\cU_x\subset \xX^r(M)$ of $X$ such that every vector field $Y\in\cU_x$ has no singularities in $\overline{U_x}$ and its periodic orbits intersecting $\overline{U_x}$ have period $>T$. See \cite[Chapter 3, Lemma 2.3]{PdM}.

  Since $\{U_x: x\in M\setminus B_{\delta}(\Gamma)\}$ is an open cover of the compact set $M\setminus B_{\delta}(\Gamma)$, we can choose a finite set of points $x_1,\ldots,x_k$ such that $\{U_{x_1},\ldots,U_{x_k}\}$ forms a finite subcover. Let $\cU=\bigcap_{i=1}^k\cU_{x_1}$.
  Then $\widetilde{\cU}:=\cU\cap\xX^r(M,\Gamma)$ is a neighborhood of $X$ in $\xX^r(M,\Gamma)$. For any $Y\in\widetilde{\cU}$, one can see that the singularities of $Y$ lying outside of $B_{\delta}(\Gamma)$ are still hyperbolic. Moreover, any periodic orbit $\gamma_Y$ of $Y$ with period $\leq T$ and lying outside of $B_{\delta}(\Gamma)$ is a continuation of some hyperbolic periodic orbit of $X$ and $\gamma_Y$ remains hyperbolic. This shows that $\xX^r_T(M,\Gamma,\delta)$ contains $\widetilde{\cU}$, and hence is open in $\xX^r(M,\Gamma)$.

  It remains to show that $\xX^r_T(M,\Gamma,\delta)$ is dense in $\xX^r(M,\Gamma)$. Let us choose a $C^{\infty}$ bump function $b_{\delta}:M\to \mathbb{R}$ such that $0\leq b_{\delta}(x)\leq 1$ for all $x\in M$ and
  \begin{equation*}
    b_{\delta}(x)=\begin{cases}
      1,\quad \text{if}\ x\in \Gamma\\
      0,\quad \text{if}\ x\notin B_{\delta}(\Gamma).\\
    \end{cases}
  \end{equation*}
  Let $\xX^r_T(M)\subset\xX^r(M)$ be the set of vector fields whose critical elements with period $\leq T$ are all hyperbolic. It is known that $\xX^r_T(M)$ is open and dense in $\xX^r(M)$, see \cite[Chapter 2, Theorem 2.6]{PdM}. For any $X\in\xX^r_T(M,\Gamma)$, we can take $Y\in\xX^r_T(M)$ arbitrarily $C^r$ close to $X$ and define the vector field
  \[\hat{Y}(x)=X(x)+(1-b_{\delta}(x))(Y(x)-X(x)).\]
  It holds that $\hat{Y}$ is $C^r$ and
  \begin{gather*}
    \hat{Y}|_{\Gamma}=X|_{\Gamma},\\
    \hat{Y}|_{M\setminus B_{\delta}(\Gamma)}=Y|_{M\setminus B_{\delta}(\Gamma)}.
  \end{gather*}
  Hence $\hat{Y}\in\xX^r_T(M,\Gamma,\delta)$. Since the $C^r$-norm of $(1-b_{\delta})$ is a constant independent of $Y$ and $Y$ can be arbitrarily $C^r$ close to $X$, we see that $\hat{Y}$ can also be arbitrarily $C^r$ close to $X$. This proves the density of $\xX^r_T(M,\Gamma,\delta)$. The proof is complete now.
\end{proof}

Recall that for any hyperbolic critical element $c$ of $X$, its stable set $W^s(c,X)$ and unstable set $W^u(c,X)$ are defined as
\[W^s(c,X)=\{x\in M: \omega(x)=c\},\quad W^u(c,X)=\{x\in M: \alpha(x)=c\}.\]
The Stable Manifold Theorem states that $W^s(c,X)$ and $W^u(c,X)$ are in fact $C^r$ immersed submanifolds of $M$.

For each $X\in\xX^r_T(M,\Gamma,\delta)$, let $c_1,\ldots,c_s$ be the critical elements of $X$ lying outside of $B_{\delta}(\Gamma)$ and having period $\leq T$. For each $i$ let us take compact neighborhoods $W^s_0(c_i,X)$ and $W^u_0(c_i,X)$ of $c_i$ in $W^s(c_i,X)$ and $W^u(c_i,X)$, respectively. 
Moreover, we may require that the submanifolds $W^s_0(c_i,X)$ and $W^u_0(c_i,X)$ are contained in $M\setminus \overline{B_{\delta/2}(\Gamma)}$ for all $i$.

By hyperbolicity of the critical elements, there exists a neighborhood $\cU\subset \xX^r(M)$ of $X$ such that each $c_i$ has a continuation $c_{i,Y}$ for any $Y\in\cU$, and moreover, $Y$ has no other critical elements lying outside of $B_{\delta}(\Gamma)$ and having period $\leq T$. In particular, any critical element of $Y\in\cU$, which lies outside of $B_{\delta}(\Gamma)$ and has period $\leq T$, must be $c_{i,Y}$ for some $i$.
By the Stable Manifold Theorem, there are continuous families of local stable manifolds $\{W^s_0(c_{i,Y},Y)\}_{Y\in\cU}$ and local unstable manifolds $\{W^u_0(c_{i,Y},Y)\}_{Y\in\cU}$ for each $i$. Shrinking $\cU$ if necessary, we can assume that $W^s_0(c_{i,Y},Y)$ and $W^u_0(c_{i,Y},Y)$ are contained in $M\setminus \overline{B_{\delta/2}(\Gamma)}$ for all $1\leq i\leq s$ and $Y\in\cU$.

For each $n\in\mathbb{N}_+$, we define $W^s_n(c_{i,Y},Y)=\phi^Y_{-n}(W^s_0(c_{i,Y},Y))$ and $W^u_n(c_{i,Y},Y)=\phi^Y_n(W^u_0(c_{i,Y},Y))$. Then $W^s_n(c_{i,Y},Y)$ varies continuously with respect to $Y$ and so does $W^u_n(c_{i,Y},Y)$. Moreover, $W^s_n(c_{i,Y},Y)$ and $W^u_n(c_{i,Y},Y)$ are compact submanifolds with boundary, and
\[W^s(c_{i,Y},Y)=\bigcup_{n\geq0}W^s_n(c_{i,Y},Y),\quad W^u(c_{i,Y},Y)=\bigcup_{n\geq0}W^u_n(c_{i,Y},Y).\]
Let $\xX^r_{T,n}(M,\Gamma,\delta)$ be the set of vector fields $Y\in\cU\cap\xX^r_T(M,\Gamma,\delta)$ such that $W^s_n(c_{i,Y},Y)$ is transverse to $W^u_n(c_{j,Y},Y)$ for all $i,j\in\{1,\ldots,s\}$.

\begin{lemma}\label{lem.trans-inv-mflds}
  Let $X\in\xX^r_T(M,\Gamma,\delta)$ and $\cU$ be a neighborhood of $X$ as above. Then for each $n\in\mathbb{N}$, $\xX^r_{T,n}(M,\Gamma,\delta)$ is open and dense in $\cU\cap\xX^r_T(M,\Gamma,\delta)$.
\end{lemma}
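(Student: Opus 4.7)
The plan is to handle openness and density separately: openness follows from continuous dependence of the finite-flow-time invariant manifolds on the vector field together with $C^1$-stability of transversality; density requires a Kupka--Smale-type perturbation, carried out in a flow box that is disjoint from $\Gamma$.

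\textbf{Openness.} By the Stable Manifold Theorem and the construction given just before the lemma, the compact submanifolds with boundary $W^s_n(c_{i,Y},Y)$ and $W^u_n(c_{j,Y},Y)$ vary continuously with $Y\in\cU$ in the $C^r$ (hence $C^1$) topology, being obtained by applying the $C^1$-continuous maps $\phi^Y_{-n}$ and $\phi^Y_n$ to continuously varying local manifolds $W^s_0$ and $W^u_0$. Transversality of two compact $C^1$ submanifolds with boundary is a $C^1$-open condition; combined with openness of $\xX^r_T(M,\Gamma,\delta)$ from Lemma \ref{lem.h-critical-elements}, this yields that $\xX^r_{T,n}(M,\Gamma,\delta)$ is open in $\cU\cap\xX^r_T(M,\Gamma,\delta)$.

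\textbf{Density.} Fix $Y_0\in\cU\cap\xX^r_T(M,\Gamma,\delta)$ and $\vep>0$. We induct on the $s^2$ pairs $(i,j)$: once transversality is achieved for a pair, it is $C^1$-open and survives subsequent perturbations of size $<\vep/s^2$. Fix a pair $(i,j)$ and let $Z_{ij}$ denote the closed (hence compact) set of non-transverse intersection points in $W^s_n(c_{i,Y_0},Y_0)\cap W^u_n(c_{j,Y_0},Y_0)$. The key geometric observation is that for every $p\in Z_{ij}$ the $Y_0$-orbit of $p$ accumulates forward on $c_{i,Y_0}$ and backward on $c_{j,Y_0}$, both of which lie in the compact set $M\setminus B_\delta(\Gamma)$; hence there is a regular time $t_p$ with $q_p:=\phi^{Y_0}_{t_p}(p)\in M\setminus\overline{B_{\delta/2}(\Gamma)}$. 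Pick a small flow box $V_p$ around $q_p$ with cross-section $\Sigma_p$ transverse to $Y_0$, arranging that $\overline{V_p}$ avoids $\overline{B_{\delta/2}(\Gamma)}$ and all the finitely many local manifolds $W^s_0(c_{k,Y_0},Y_0), W^u_0(c_{k,Y_0},Y_0)$, and that $V_p$ is crossed exactly once by the local $W^u_n$-branch through $p$ and not at all by the local $W^s_n$-branch (achievable by choosing $t_p$ sufficiently large and positive). Inside $V_p$ the standard flow-box perturbation (see \cite[Chapter 3]{PdM}) produces a finite-parameter $C^r$-smooth family $\{Y_u\}$ coinciding with $Y_0$ outside $V_p$ (and hence on $\Gamma$), whose trace on $\Sigma_p$ of $W^u_n(c_{j,Y_u},Y_u)$ is translated by $u$ while that of $W^s_n(c_{i,Y_u},Y_u)$ is unchanged. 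By the Thom transversality theorem a generic small $u$ makes the two traces transverse in $\Sigma_p$, which by flow-invariance yields transversality of $W^s_n$ and $W^u_n$ at $p$; by $C^1$-openness this persists on a neighborhood of $p$ in $Z_{ij}$. Covering $Z_{ij}$ by finitely many such perturbations with mutually disjoint supports completes the induction step.

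The main technical delicacy, and the step most likely to require care, is the flow-box shift argument: one must locate $V_p$ along the orbit of $p$ so that the localized perturbation moves one of the traces on $\Sigma_p$ while leaving the other fixed. Because $n$ is a fixed finite integer and the critical elements are hyperbolic, this separation is always achievable by pushing $t_p$ far out along the orbit; this is precisely the same issue that appears in the classical Kupka--Smale proof for flows, and the restriction that perturbations be supported in $M\setminus\overline{B_{\delta/2}(\Gamma)}$ (to keep $Y|_\Gamma=Y_0|_\Gamma$) does not interfere with it, since the chosen $V_p$ already lies in that region.
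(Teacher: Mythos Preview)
Your openness argument matches the paper's. For density, however, the paper takes a much shorter route than your flow-box perturbation. The key observation is that since $\Gamma$ is invariant for every $Y\in\xX^r(M,\Gamma)$ and $W^{s/u}_0\subset M\setminus\overline{B_{\delta/2}(\Gamma)}$, the compact manifolds $W^s_n(c_{i,Y},Y)=\phi^Y_{-n}(W^s_0)$ and $W^u_n(c_{j,Y},Y)=\phi^Y_n(W^u_0)$ are themselves disjoint from $\Gamma$, hence lie in $M\setminus\overline{B_{\delta'}(\Gamma)}$ for some $\delta'>0$. One then invokes the classical Kupka--Smale theorem as a black box to pick a Kupka--Smale $Y'$ arbitrarily $C^r$-close to $Y$ and sets $\hat Y = Y + (1-b_{\delta'})(Y'-Y)$, where $b_{\delta'}$ is a bump function equal to $1$ on $\Gamma$ and $0$ outside $B_{\delta'}(\Gamma)$. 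Then $\hat Y=Y'$ on $M\setminus B_{\delta'}(\Gamma)$, so its $W^s_n$, $W^u_n$ agree with those of $Y'$ and are transverse, while $\hat Y|_\Gamma=Y|_\Gamma$. This handles all pairs $(i,j)$ simultaneously with a single perturbation and avoids the induction on pairs, the compactness argument for $Z_{ij}$, and the flow-box machinery entirely.

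Your strategy can also be made to work, but the placement ``$t_p$ sufficiently large and positive'' is wrong as stated. A perturbation supported near $\phi^{Y_0}_{t_p}(p)$ with $t_p\notin[-n,n]$ changes neither $W^u_n=\phi_n(W^u_0)$ nor $W^s_n=\phi_{-n}(W^s_0)$ near $p$, since the relevant orbit segments live in the time window $[-n,n]$; and for large $t_p>0$ the point $q_p$ approaches $c_i$ and hence enters $W^s_0(c_i)$, violating your own requirement that $\overline{V_p}$ avoid the local manifolds. The correct choice is $t_p\in(-n,0)$ (to shift the trace of $W^u_n$ while fixing that of $W^s_n$) or $t_p\in(0,n)$ for the reverse, placing $q_p$ just outside $W^u_0$ respectively $W^s_0$ so that $V_p$ avoids them and stays clear of $\overline{B_{\delta/2}(\Gamma)}$.
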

\begin{proof}
  Let $c_1,\ldots,c_s$ be the critical elements of $X$ lying outside of $B_{\delta}(\Gamma)$ and having period $\leq T$. For any $1\leq i,j\leq s$, we denote by $\cU_{n}^{i,j}$ the set of vector fields $Y\in\cU\cap\xX^r_T(M,\Gamma,\delta)$ such that $W^s_n(c_{i,Y},Y)$ is transverse to $W^u_n(c_{j,Y},Y)$. One has $\xX^r_{T,n}(M,\Gamma,\delta)=\bigcap_{i,j=1}^s\cU_n^{i,j}$. Thus, it suffices to show that $\cU_n^{i,j}$ is open and dense in $\cU\cap\xX^r_T(M,\Gamma,\delta)$.
  That $\cU_n^{i,j}$ is open in $\cN$ follows easily from the transverse property and continuity of the local stable manifold $W^s_n(c_{i,Y},Y)$ and the local unstable manifold $W^u_n(c_{i,Y},Y)$. In the following we show density of $\cU^{i,j}_n$.

  Let $Y\in\cU\cap\xX^r_T(M,\Gamma,\delta)$.
  Recall that $\Gamma$ is a compact invariant subset.
  It follows that given $n\in\mathbb{N}$, there exists $\delta'\in (0,\delta/2)$ such that $W^s_n(c_{i,Y},Y)$ and $W^u_n(c_{j,Y},Y)$ are contained in $M\setminus \overline{B_{\delta'}(\Gamma)}$. We fix a $C^{\infty}$ bump function $b_{\delta'}:M\to\mathbb{R}$ such that $0\leq b_{\delta'}(x)\leq 1$ and
  \begin{equation*}
    b_{\delta'}(x)=\begin{cases}
      1,\quad \text{if}\ x\in \Gamma,\\
      0,\quad \text{if}\ x\notin B_{\delta'}(\Gamma).
    \end{cases}
  \end{equation*}
  Let $C=\|1-b_{\delta'}\|_{C^r}$.

  For any $\vep>0$, by Kupka-Smale theorem we can take a Kupka-Smale $Y'\in\cU$ such that $\|Y-Y'\|_{C^r}<C^{-1}\vep$.
  Moreover, the Stable Manifold Theorem implies that we can suppose $W^s_n(c_{i,Y'},Y')$ and $W^u_n(c_{j,Y'},Y')$ are contained in $M\setminus \overline{B_{\delta'}(\Gamma)}$. Note that $W^s_n(c_{i,Y'},Y')$ is transverse to $W^u_n(c_{j,Y'},Y')$ since $Y'$ is Kupka-Smale.
  Define
  \[\hat{Y}(x)=Y(x)+(1-b_{\delta'}(x))(Y'(x)-Y(x)).\]
  We have
  \begin{gather*}
    \hat{Y}|_{\Gamma}=Y|_{\Gamma},\\
    \hat{Y}|_{M\setminus B_{\delta'}(\Gamma)}=Y'|_{M\setminus B_{\delta'}(\Gamma)}.
  \end{gather*}
  It follows that $\hat{Y}\in\xX^r_{T}(M,\Gamma,\delta)$ and $W^s_n(c_{i,\hat{Y}},\hat{Y})$ is transverse to $W^u_n(c_{i,\hat{Y}},\hat{Y})$ as they coincide with $W^s_n(c_{i,Y'},Y')$ and $W^u_n(c_{j,Y'},Y')$, respectively.
  By the way $\hat{Y}$ is defined, we have \[\|\hat{Y}-Y\|_{C^r}\leq\|1-b_{\delta'}\|_{C^r}\cdot\|Y'-Y\|_{C^r}<\vep.\]
  As $\vep$ is arbitrary, we can assume without loss of generality that $\hat{Y}\in\cU$. Then, $\hat{Y}\in \cU_n^{i,j}$. This proves that $\cU_n^{i,j}$ is dense in $\cU\cap\xX^r_T(M,\Gamma,\delta)$.
\end{proof}

Now we are ready to show that the set $KS(M,\Gamma)$ of $\Gamma$-avoiding Kupka-Smale vector fields is residual in $\xX^r(M,\Gamma)$.
\begin{proof}
  [Proof of Theorem \ref{thm.Gamma-KS}]
  Given $T>0$ and $\delta>0$. Let $\xX^r_{T,*}(M,\Gamma,\delta)$ be the set of vector fields $Y\in\xX^r_T(M,\Gamma,\delta)$ such that $W^s(c_1,Y)$ is transverse to $W^u(c_2,Y)$ whenever $c_1$ and $c_2$ are critical elements of $Y$ lying outside of $B_{\delta}(\Gamma)$ and having period $\leq T$. It follows from Lemma \ref{lem.trans-inv-mflds} that for any $X\in\xX^r_T(M,\Gamma,\delta)$, there is a $C^r$ neighborhood $\cU_X$ of $X$ such that $\xX^r_{T,*}(M,\Gamma,\delta)\cap\cU_X=\bigcap_{n\geq 0}\xX^r_{T,n}(M,\Gamma,\delta)$ is residual in $\xX^r_T(M,\Gamma,\delta)\cap\cU_X$. This implies that $\xX^r_{T,*}(M,\Gamma,\delta)$ is residual in $\xX^r_T(M,\Gamma,\delta)$, see \cite[Chapter 3, Lemma 3.3]{PdM}.
  By Lemma \ref{lem.h-critical-elements}, $\xX^r_T(M,\Gamma,\delta)$ is open and dense in $\xX^r(M,\Gamma)$. It follows that $\xX^r_{T,*}(M,\Gamma,\delta)$ is residual in $\xX^r(M,\Gamma)$. Consequently, $KS(M,\Gamma)=\bigcap_{m,k=1}^{\infty}\xX^r_{m,*}(M,\Gamma,1/k)$ is residual in $\xX^r(M,\Gamma)$.
\end{proof}

\subsection{Generic properties in the $C^1$ topology}

This subsection is devoted to the proof of Theorem \ref{thm.genericity}, \ref{thm.f-limit}, and \ref{thm.relative-h-class}. Before the proof, we would like to recall some key definitions and results, such as chain transitivity and $C^1$ connecting lemmas.

\subsubsection{Chain recurrence and connecting lemmas}
For any $x\in M$ and $t>0$, we denote by $(x,t)$ the orbit segment $\{\phi_{s}(x): 0\leq s\leq t\}$.
Given $x,y\in M$, for any constant $\vep>0$, a finite sequence of orbit segments $\{(x_i,t_i)\}_{i=0}^{n-1}$  is called an $\vep$-chain (or $\vep$-pseudo-orbit) from $x$ to $y$ if $t_i\geq 1$ and  $\mathrm{dist}(\phi_{t_i}(x_i),x_{i+1})<\vep$ for all $i=0,\ldots,n-1$, where $x_0=x$, $x_n=y$.
\begin{definition}
  Let $U$ be any open subset of $M$. Suppose $x,y\in M\setminus U$. We say that {\em $y$ is chain attainable from $x$ avoiding $U$} if for every $\vep>0$, there exists an $\vep$-chain from $x$ to $y$ that has no intersection with $U$.
\end{definition}
The following is a connecting lemma for chains.
\begin{lemma}[\cite{BC}]
\label{lem.connecting}
  For any $\Gamma$-avoiding Kupka-Smale $X\in \xX^1(M,\Gamma)$, for any $C^1$-neighborhood $\cU$ of $X$, any neighborhood $U$ of $\Gamma$ and $x,y\in M\setminus U$, if $y$ is chain attainable from $x$ avoiding $U$, then there exists $Y\in\cU\cap\xX^1(M,\Gamma)$ and $t>0$ such that $\phi^Y_t(x)=y$.
\end{lemma}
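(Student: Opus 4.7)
The plan is to reduce to the Bonatti--Crovisier connecting lemma for chains \cite{BC} in $\xX^1(M)$ and observe that the construction there can be performed entirely away from a neighborhood of $\Gamma$. Fix $X\in\xX^1(M,\Gamma)$ that is $\Gamma$-avoiding Kupka-Smale, a $C^1$-neighborhood $\cU$ of $X$, a neighborhood $U$ of $\Gamma$, and points $x,y\in M\setminus U$ such that $y$ is chain attainable from $x$ avoiding $U$. First I would pick an auxiliary open neighborhood $V$ of $\Gamma$ with $\overline V\subset U$, and replace $U$ by a slightly smaller open set $U'$ so that $\overline V\subset U'\subset\overline{U'}\subset U$ and $x,y\in M\setminus\overline{U'}$; by definition of chain attainability avoiding $U$, $y$ is still chain attainable from $x$ using $\vep$-pseudo-orbits that stay inside the compact set $K:=M\setminus U'$, for every $\vep>0$.

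Next I would invoke the Bonatti--Crovisier construction on the compact set $K$. Their proof builds the actual connecting orbit by a finite sequence of $C^1$-small local perturbations supported in a disjoint union of flow boxes (``perturbation cubes'') placed along the pseudo-orbit, each of arbitrarily small diameter. The only hypothesis used at this step is that the ambient vector field is Kupka-Smale on a neighborhood of the pseudo-orbit, and this is precisely what the $\Gamma$-avoiding Kupka-Smale property of $X$ supplies on $K$, since all critical elements that meet $K$ lie outside $\Gamma$ and are therefore hyperbolic with transverse invariant manifolds. By choosing the perturbation cubes with diameter smaller than $\mathrm{dist}(K,\overline V)/2$, they are all contained in $M\setminus\overline V$; hence the resulting perturbed vector field $Y$ satisfies $Y=X$ on $\overline V\supset\Gamma$, so $Y|_\Gamma=X_0|_\Gamma$ and $Y\in\xX^1(M,\Gamma)$. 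Taking the cubes small enough also ensures $Y\in\cU$, and by the conclusion of the Bonatti--Crovisier construction there exists $t>0$ with $\phi^Y_t(x)=y$.

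The only delicate point is to confirm that the Bonatti--Crovisier argument only needs Kupka-Smale \emph{in a neighborhood of the pseudo-orbit}, rather than globally on $M$. This is implicit in their proof: the Kupka-Smale hypothesis is used exclusively to guarantee that nearby hyperbolic critical elements possess local stable and unstable manifolds in ``general position'', a property that is local in nature. Since every pseudo-orbit we consider lies in $K$, and every critical element of $X$ that meets $K$ is a critical element of $X$ lying outside $\Gamma$, the $\Gamma$-avoiding Kupka-Smale assumption gives exactly the local Kupka-Smale input their argument needs.

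The main obstacle, accordingly, is bookkeeping rather than a new idea: one has to track through the Bonatti--Crovisier construction to verify that the perturbation support can be confined to $M\setminus\overline V$ and that the Kupka-Smale condition is never invoked on $\Gamma$. Once this localization is checked, the statement follows immediately from their theorem applied in the open manifold-with-boundary $M\setminus\overline V$ (or more precisely, applied on $M$ but with perturbations supported in $M\setminus\overline V$).
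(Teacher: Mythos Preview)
Your proposal is correct and follows exactly the approach the paper itself takes: the paper does not give a detailed proof either, but simply remarks that the Bonatti--Crovisier construction performs its perturbations in arbitrarily small neighborhoods of the chain, hence away from $\Gamma$, and refers to \cite[Theorem 6]{Cr06} for a similar formulation. Your write-up is in fact more explicit than the paper's, in that you spell out why the $\Gamma$-avoiding Kupka-Smale hypothesis suffices (any periodic orbit meeting $M\setminus U'$ lies outside the invariant set $\Gamma$ and is therefore hyperbolic with transverse invariant manifolds), which is a useful clarification.
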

The statement of the lemma is somewhat different from \cite{BC}, but it is essentially contained there. The key observation is that the perturbations for connecting will be taken in arbitrarily small neighborhoods of the chains, hence away from $\Gamma$. See \cite[Theorem 6]{Cr06} for a similar statement of the result.

A compact invariant set $\Lambda$ is called {\em chain-transitive} if for any $x,y\in \Lambda$ and any $\vep>0$, there exists an $\vep$-chain from $x$ to $y$ which is contained in $\Lambda$. In particular, if $\Lambda$ is chain-transitive and $\Lambda\cap\Gamma=\emptyset$, then there is a neighborhood $U$ of $\Gamma$ such that for any $x,y\in\Lambda$, $y$ is chain attainable from $x$ avoiding $U$. The following result is an easy consequence of Lemma \ref{lem.connecting} and its proof can be found in \cite[Section 2.4]{Cr06}.

\begin{corollary}[{\cite[Theorem 3]{Cr06}}]
\label{cor.chain-transitive}
  For any $\Gamma$-avoiding Kupka-Smale $X\in\xX^1(M,\Gamma)$, for any compact chain-transitive set $\Lambda$ of $X$, if $\Lambda\cap\Gamma=\emptyset$, then there exists a sequence of vector fields $X_n\in\xX^1(M,\Gamma)$, each with a periodic orbit $\gamma_n$, such that $\gamma_n$ converges to $\Lambda$ in the Hausdorff topology.
\end{corollary}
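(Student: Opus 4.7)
The plan is to apply Lemma \ref{lem.connecting} to realize an arbitrarily fine periodic pseudo-orbit inside $\Lambda$ as a true periodic orbit of a nearby vector field. Since $\Lambda$ and $\Gamma$ are disjoint compact sets, fix once and for all an open neighborhood $U$ of $\Gamma$ with $\overline{U}\cap\Lambda = \emptyset$; any pseudo-orbit lying inside $\Lambda$ then automatically avoids $U$.

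Fix $\vep > 0$ and a small $C^1$-neighborhood $\cU$ of $X$. First I would pick a finite $\vep$-dense subset $\{x_1,\ldots,x_N\}$ of $\Lambda$. Chain-transitivity of $\Lambda$ allows me, for every $\delta > 0$, to join consecutive $x_i$ (indices modulo $N$) by $\delta$-chains lying inside $\Lambda$; concatenating these produces a closed $\delta$-pseudo-orbit $\pi$ based at $x_1$ which passes within $\delta$ of every $x_i$ and therefore is $(\vep+\delta)$-dense in $\Lambda$. In particular, $x_1$ is chain attainable from $x_1$ avoiding $U$, so Lemma \ref{lem.connecting} yields $Y \in \cU \cap \xX^1(M,\Gamma)$ and $t > 0$ with $\phi^Y_t(x_1) = x_1$. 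Denote the resulting periodic orbit by $\gamma$.

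The remaining and principal step is to verify that for $\delta$ and $\cU$ chosen small enough relative to $\vep$, the Hausdorff distance between $\gamma$ and $\Lambda$ is at most $3\vep$; then letting $\vep_n \downarrow 0$ produces the desired sequence $X_n := Y_n, \gamma_n$. This Hausdorff control is the part I expect to be most delicate: it is not given directly by the statement of Lemma \ref{lem.connecting}, but rather by inspection of its proof in \cite{BC}. Specifically, the perturbations producing $Y$ are supported in arbitrarily small tubular neighborhoods of the long orbit segments that compose $\pi$, so the true orbit $\gamma$ shadows $\pi$ point-for-point up to an error going to $0$ with the perturbation size and with $\delta$. This shadowing gives both inclusions needed: $\gamma$ lies in a small neighborhood of $\pi\subset\Lambda$, and conversely (since $\gamma$ passes near each $x_i$ while $\{x_i\}$ is $\vep$-dense in $\Lambda$) every point of $\Lambda$ lies within $\vep + o(1)$ of $\gamma$. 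Making this shadowing explicit is precisely what \cite[Section 2.4]{Cr06} does, and the remainder of the argument is a diagonal extraction across $\vep_n\to 0$.
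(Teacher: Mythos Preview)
Your proposal is correct and follows exactly the approach the paper refers to: the paper does not give its own proof but simply states that the result is an easy consequence of Lemma~\ref{lem.connecting} and that the details can be found in \cite[Section~2.4]{Cr06}. You have outlined precisely that argument, including the correct observation that Hausdorff control of the resulting periodic orbit is not immediate from the statement of Lemma~\ref{lem.connecting} and requires the shadowing property built into the proof in \cite{BC}, as made explicit in \cite[Section~2.4]{Cr06}.
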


A chain-transitive set consists of {\em chain-recurrent} points: a point $x$ is called chain-recurrent if there exist $\vep$-chains from $x$ to itself for any $\vep>0$.
We say that a chain-transitive set $\Lambda$ is {\em maximal} if it is the only chain-transitive set that contains $\Lambda$. When a chain-transitive set is maximal, it is called a {\em chain recurrence class}. For any chain-recurrent point $x$, we denote by $C(x)$ the chain recurrence class that contains $x$. A chain recurrence class is said to be {\em non-trivial} if it is not reduced to a critical element.

We will also need the following version of connecting lemma.
\begin{lemma}[{\cite{Ar,WX}}]
\label{lem.connecting-wx}
  For any vector field $X\in\xX^1(M)$, for any $z\notin \Sing(X)\cup\Per(X)$, for any $\vep>0$, there exist $L>0$ and two neighborhoods $\widetilde{W}_z\subset W_z$ of $z$ such that the following property holds.

  For any $p,q\in M$, if the positive orbit of $p$ and the negative orbit of $q$ enter $\widetilde{W}_z$, but the orbit segments $\{\phi^X_t(p):0\leq t\leq L\}$ and $\{\phi^X_t(q): -L\leq t\leq 0\}$ do not intersect $W_z$, then there is a vector field $Y$ $\vep$-close to $X$ in the $C^1$ topology such that
  \begin{itemize}
    \item $q$ is in the positive orbit of $p$ under the flow $\phi^Y_t$ generated by $Y$;
    \item $Y(x)=X(x)$ for any $x\in M\setminus W_{L,z}$, where $W_{L,z}=\bigcup_{0\leq t\leq L}\phi^X_t(W_z)$.
  \end{itemize}
  Moreover, the neighborhoods $\widetilde{W}_z\subset W_z$ of $z$ can be chosen arbitrarily small.
\end{lemma}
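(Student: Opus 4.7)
The plan is to prove this flow-version of Hayashi's connecting lemma by reducing it to the diffeomorphism case, following \cite{WX}. Since $z\notin\Sing(X)\cup\Per(X)$, the Tubular Flow Theorem furnishes a flow-box chart $\psi\colon D_r\times(-\delta,\delta)\to M$ with $\psi(0,0)=z$ and $\psi_*(\partial_t)=X$, where $D_r$ is a transverse disk of radius $r$; set $W_z=\psi(D_r\times(-\delta,\delta))$ and $\widetilde{W}_z=\psi(D_{r/2}\times(-\delta/2,\delta/2))$. Shrinking $r$ and $\delta$ and using non-periodicity, one may assume the flow tube $W_{L,z}=\bigcup_{0\le t\le L}\phi^X_t(W_z)$ is embedded, so each of its points has a unique representation as $\phi^X_t(\psi(u,0))$ with $t\in[0,L+\delta]$.

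The geometric setup is: by hypothesis the positive orbit of $p$ enters $\widetilde{W}_z$ at some $p_1=\phi^X_{t_p}(p)$ with $t_p>L$, and analogously $q_1=\phi^X_{-t_q}(q)\in\widetilde{W}_z$ with $t_q>L$; write $p_1=\psi(u_p,s_p)$, $q_1=\psi(u_q,s_q)$ with $u_p,u_q\in D_{r/2}$. The task reduces to producing a $C^1$-small perturbation $Y$ with $\mathrm{supp}(Y-X)\subset W_{L,z}$ whose forward flow sends $p_1$ onto the $X$-orbit of $q_1$; the untouched segments $\{\phi^X_t(p):0\le t\le L\}$ and $\{\phi^X_t(q):-L\le t\le 0\}$ avoid $W_z$ by hypothesis, so the orbits of $p$ and $q$ outside the tube remain intact and need not be tracked.

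To execute the connection, I would slice the tube by cross-sections $\Sigma_j=\phi^X_{j\tau}(\psi(D_r\times\{0\}))$, $j=0,\dots,N$, with $N\tau\approx L$. In the flow-box coordinates the return map from $\Sigma_j$ to $\Sigma_{j+1}$ is the identity on the transverse factor, so Hayashi's selection lemma produces $N$ transverse translations, each of size $O(|u_p-u_q|/N)$, whose concatenation brings $u_p$ to $u_q$. Each such translation is realized as a compactly supported vector-field perturbation on the slab between $\Sigma_j$ and $\Sigma_{j+1}$, built by multiplying a constant transverse vector by a bump function in the flow direction; the resulting $C^1$ norm scales like the displacement per slab divided by the slab length, i.e.\ $O(|u_p-u_q|/(N\tau))=O(|u_p-u_q|/L)$, with constants depending on the transverse bump.

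The main obstacle is precisely this uniform $C^1$ bookkeeping. Hayashi's selection lemma constrains how small each transverse shift must be relative to $r$, and the smeared vector-field perturbation must be $C^1$-small, not merely $C^0$-small. Both constraints are met by exploiting the last clause of the lemma: shrinking $\widetilde{W}_z\subset W_z$ (equivalently, shrinking the ratio $|u_p-u_q|/r$) forces the per-slab displacement to be arbitrarily small compared with $r$ and with the slab length, so the total $C^1$ cost falls below $\vep$. The vector field $Y$ thus constructed agrees with $X$ outside $W_{L,z}$ and satisfies $\phi^Y_{T}(p)=q$ for the appropriate transit time $T$ through the tube, completing the proof.
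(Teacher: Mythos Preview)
The paper does not give its own proof of this lemma; it is quoted from \cite{Ar,WX} as a black box. So there is no paper proof to compare with, and the question is simply whether your sketch is correct.

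It is not, because it overlooks the central difficulty of the connecting lemma: the \emph{multiple-entry problem}. You pick one entry $p_1=\phi^X_{t_p}(p)\in\widetilde W_z$ and one entry $q_1=\phi^X_{-t_q}(q)\in\widetilde W_z$, then perturb inside the tube $W_{L,z}$ so that the forward $Y$-orbit of $p_1$ meets the forward $X$-orbit of $q_1$. But the hypothesis only controls the orbit segments $\phi^X_{[0,L]}(p)$ and $\phi^X_{[-L,0]}(q)$ relative to $W_z$; nothing prevents the $X$-orbit of $p$ on $[L,t_p]$, or the $X$-orbit of $q_1$ on $[0,t_q]$, from crossing the perturbation region $W_{L,z}$ many times. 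Each such crossing is altered by your perturbation, so for the new flow $Y$ you have no reason to believe that $\phi^Y_{t_p}(p)=p_1$, nor that the forward $Y$-orbit of $q_1$ still reaches $q$. Your sentence ``the orbits of $p$ and $q$ outside the tube remain intact and need not be tracked'' is exactly where the argument breaks: it is the passages \emph{through} the tube that must be tracked, and there may be arbitrarily many of them.

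This is precisely the obstacle Hayashi overcame; the combinatorial ``selection'' (or ``shortcut'') argument in \cite{WX,Ar} is designed to choose, among all the entries of the two orbits into the tower of cross-sections, a compatible pair so that the perturbed orbit never revisits a slab it has already been pushed through. Your invocation of ``Hayashi's selection lemma'' to produce $N$ equal translations summing to $u_q-u_p$ is not what that lemma does; decomposing a single displacement into $N$ pieces is trivial and needs no lemma. What is needed---and missing from your sketch---is the selection of entry points and the inductive shortcutting that guarantees the concatenated perturbation actually connects $p$ to $q$ for the \emph{new} flow. A secondary issue is that your $C^1$ estimate $O(|u_p-u_q|/L)$ ignores the $1/r$ contribution from the transverse bump; this is repairable by your final shrinking argument, but the multiple-entry gap is not.
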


\subsubsection{Proof of Theorem \ref{thm.genericity} and \ref{thm.f-limit}}
Let us denote by $\xC$ the set of all nonempty compact subsets of $M$, endowed with the Hausdorff topology. Take a countable basis $V_1,V_2,\ldots$ of $\xC$. In the following, we consider the space $\xX^1(M,\Gamma)$.

  \begin{proof}
    [Proof of Theorem \ref{thm.genericity}]
    For each $m,n\in\mathbb{N}_+$, let $\cH_{m,n}$ be the set of $Y\in \xX^1(M,\Gamma)$ such that $Y$ has a hyperbolic periodic orbit which is an element of $V_n$ and contained in $M\setminus\overline{B_{1/m}(\Gamma)}$. By robustness of hyperbolic periodic orbits, $\cH_{m,n}$ is an open set in $\xX^1(M,\Gamma)$. Let $\cN_{m,n}$ be the interior of the set $\xX^1(M,\Gamma)\setminus \cH_{m,n}$. Then for each $Y\in\cN_{m,n}$, there is a $C^1$ neighborhood $\cU$ of $Y$ in $\xX^1(M,\Gamma)$ such that for each $Z\in\cU$, every hyperbolic periodic orbit of $Z$ contained in $M\setminus\overline{B_{1/m}(\Gamma)}$ is not an element of $V_n$. Define
    \[\cG=\bigcap_{m,n=1}^{\infty}(\cH_{m,n}\cup\cN_{m,n}).\]
    By construction, $\cG$ is a residual subset of $\xX^1(M,\Gamma)$.
    Let $\cG_1=\cG\cap KS(M,\Gamma)$, where $KS(M,\Gamma)$ is the set of $\Gamma$-avoid Kupka-Smale vector fields. Now since $KS(M,\Gamma)$ is residual in $\xX^1(M,\Gamma)$ (Theorem \ref{thm.Gamma-KS}), so is $\cG_1$.

    For any $X\in \cG_1$, suppose $\Lambda$ is a compact chain-transitive set and $\Lambda\cap\Gamma=\emptyset$. Let $V_{n_1}\supset V_{n_2}\supset\cdots$ be a sequence of neighborhoods of $\Lambda$ in $\xC$ such that $\bigcap_k V_{n_k}=\{\Lambda\}$. Since $\Lambda\cap\Gamma=\emptyset$, we can assume that elements in $V_{n_k}$ are uniformly away from $\Gamma$ for all $k$. In particular, there is $m_0$ such that each element of $V_{n_k}$ is contained in $M\setminus\overline{B_{1/m_0}(\Gamma)}$. Since $X\in\cG_1$, we have $X\in (\cH_{m_0,n_k}\cup\cN_{m_0,n_k})\cap KS(M,\Gamma)$. Corollary \ref{cor.chain-transitive} implies that $X\notin\cN_{m_0,n_k}$. Hence $X\in\cH_{m_0,n_k}\cap KS(M,\Gamma)$, and this holds for all $k$. It follows that $X$ has a hyperbolic periodic orbit $\gamma_k\in V_{n_k}$ for each $k$. Since $\bigcap_k V_{n_k}=\{\Lambda\}$,  one finds that $\Lambda$ is the Hausdorff limit of the periodic orbits $\gamma_k$.
  \end{proof}

The proof of Theorem \ref{thm.f-limit} uses a similar argument.
\begin{proof}
  [Proof of Theorem \ref{thm.f-limit}]
  For each $m,n\in\mathbb{N}_+$, let $\cH'_{m,n}$ be the set of $Y\in\xX^1(M,\Gamma)$ such that $Y$ has an index $i$ hyperbolic periodic orbit which is an element of $V_n$ and contained in $M\setminus\overline{B_{1/m}(\Gamma)}$. By robustness of hyperbolic periodic orbits, $\cH'_{m,n}$ is an open set in $\xX^1(M,\Gamma)$. Let $\cN'_{m,n}$ be the interior of the set $\xX^1(M,\Gamma)\setminus\cH'_{m,n}$. Then for each $Y\in\cN'_{m,n}$, there is a $C^1$ neighborhood $\cU$ of $Y$ in $\xX^1(M,\Gamma)$ such that for each $Z\in\cU$, every index $i$ hyperbolic periodic orbit of $Z$ contained in $M\setminus \overline{B_{1/m}(\Gamma)}$ is not an element of $V_n$. Define
  \[\cG'=\bigcap_{m,n=1}^{\infty}(\cH'_{m,n}\cup\cN'_{m,n}).\]
  Then $\cG'$ is a residual subset of $\xX^1(M,\Gamma)$. It follows that $\cG_2:=\cG'\cap KS(M,\Gamma)$ is also a residual subset of $\xX^1(M,\Gamma)$.

  For any $X\in \cG_2$, suppose $\Lambda$ is an index $i$ fundamental limit within $\xX^1(M,\Gamma)$ and $\Lambda\cap\Gamma=\emptyset$.
  Let $V_{n_1}\supset V_{n_2}\supset\cdots$ be a sequence of neighborhoods of $\Lambda$ in $\xC$ such that $\bigcap_k V_{n_k}=\{\Lambda\}$. Since $\Lambda\cap\Gamma=\emptyset$, we can assume that elements in $V_{n_k}$ are uniformly away from $\Gamma$ for all $k$. In particular, there is $m_0$ such that each element of $V_{n_k}$ is contained in $M\setminus\overline{B_{1/m_0}(\Gamma)}$. Since $X\in\cG_2$, we have $X\in (\cH'_{m_0,n_k}\cup\cN'_{m_0,n_k})\cap KS(M,\Gamma)$. Since $\Lambda$ is an index $i$ fundamental limit within $\xX^1(M,\Gamma)$, by definition there exists $X_n\to X$ in the $C^1$ topology and each $X_n\in\xX^1(M,\Gamma)$ has an index $i$ hyperbolic periodic orbit $\gamma_n$ such that $\gamma_n\to\Lambda$ in the Hausdorff topology. This implies that $X\notin\cN'_{m_0,n_k}$. Hence $X\in\cH'_{m_0,n_k}\cap KS(M,\Gamma)$, and this holds for all $k$. It follows that $X$ has an index $i$ hyperbolic periodic orbit $\gamma_k\in V_{n_k}$ for each $k$. Since $\bigcap_k V_{n_k}=\{\Lambda\}$,  one finds that $\Lambda$ is the Hausdorff limit of the index $i$ periodic orbits $\gamma_k$.
  This completes the proof.
\end{proof}

\subsubsection{Proof of Theorem \ref{thm.relative-h-class}}
Recall that the {\em homoclinic class} of a hyperbolic periodic orbit $\gamma$, denoted by $H(\gamma)$, is defined to be the closure of the set of transverse intersections of $W^s(\gamma)$ and $W^u(\gamma)$.
For any neighborhood $U$ of $\gamma$, one defines $W^s_U(\gamma)$ to be the set of points $x\in W^s(\gamma)$ whose entire forward orbit is contained in $U$. Similarly, one defines $W^u_U(\gamma)$ to be the set of points $x\in W^u(\gamma)$ whose entire backward orbit is contained in $U$. Then the relative homoclinic class $H(\gamma, U)$ of $\gamma$ in $U$ is defined as
\[H(\gamma,U)=\overline{W^s_U(\gamma)\pitchfork W^u_U(\gamma)}.\]
In other words, $H(\gamma,U)$ is the closure of the transverse homoclinic intersections of $\gamma$ whose entire orbits are contained in $U$.
Clearly, the relative homoclinic class $H(\gamma,U)$ is a closed subset of $H(\gamma)$. 

Now, let us prove Theorem \ref{thm.relative-h-class}.
\begin{proof}
  [Proof of Theorem \ref{thm.relative-h-class}]
  Since the closure of the unstable manifold of a hyperbolic periodic orbit varies lower semi-continuously, there exists a residual subset $\cG''$ of $\xX^1(M,\Gamma)$ such that $\overline{W^u(\gamma,X)}$ varies continuously with respect to the vector field for each hyperbolic periodic orbit $\gamma$ of $X\in\cG''$ such that $\gamma\cap\Gamma=\emptyset$. By the same reasoning, we may assume that $\overline{W^s(\gamma,X)}$ also varies continuously with respect to the vector field.

  Let $\cG_1$ be the residual subset of $\xX^1(M,\Gamma)$ given by Theorem \ref{thm.genericity}. For each $X\in\cG_1\cap\cG''$, suppose $\Lambda$ is a compact chain transitive set such that $\Lambda\cap\Gamma=\emptyset$ and it contains a periodic orbit $\gamma$, for any neighborhood $U$ of $\Lambda$, we claim that $\Lambda\subset  \overline{W^u_U(\gamma,X)}\cap\overline{W^s_U(\gamma,X)}$. Firstly, if $\Lambda=\gamma$, then the claim holds trivially. We now assume $\Lambda\setminus\gamma\neq\emptyset$. Since $\Lambda$ is chain transitive and is not reduced to $\gamma$, there exists a point $x\in \Lambda\cap W^u_U(\gamma,X)\setminus\gamma$. By Theorem \ref{thm.genericity}, $\Lambda$ is the Hausdorff limit of a sequence of periodic orbits. It follows that for any $z\in \Lambda$, there is an orbit segment contained in $U$, from a point arbitrarily close to $x$ to a point  $y\in U$, which is arbitrarily close to $z$. By Lemma \ref{lem.connecting-wx}, there is vector field $Y$ arbitrarily $C^1$ close to $X$ such that $x\in W^u(\gamma,Y)$ and the forward orbit of $x$ goes to $y$ along an orbit segment in $U$. In other words, $y\in  W^u_U(\gamma,Y)$. Observe that $\Lambda$ is away from $\Gamma$, the connecting is also away from $\Gamma$. (We may assume that $\overline{U}\cap \Gamma=\emptyset$ and the perturbations are done in $U$.) Hence $Y\in\xX^r(M,\Gamma)$. As $X\in\cG''$ and $y$ is arbitrarily close to $z$, the continuity of $\overline{W^u(\gamma,X)}$ implies that $z\in \overline{W^u(\gamma,X)}$. In fact, the proof above shows that $z\in \overline{W^u_U(\gamma,X)}$. Similarly, $z\in\overline{W^s_U(\gamma,X)}$. This proves the claim.

  Then, for any $z\in\Lambda$ and any neighborhood $U_z$ of $z$, using Lemma \ref{lem.connecting-wx} again one can obtain a vector field $X'$ arbitrarily $C^1$  close to $X$ such that there is a homoclinic point $w\in W^u_U(\gamma,X')\cap W^s_U(\gamma,X')$ of $\gamma$ contained in $U_z$. By an extra perturbation if necessary, we can suppose that $w$ is an transverse intersection of $W^s_U(\gamma,X')$ and $W^u_U(\gamma,X')$. Note again that the perturbations for the connecting is away from $\Gamma$, we have $X'\in\xX^1(M,\Gamma)$.

  Now, since transverse homoclinic points persist under small perturbation, by a standard argument we can find a residual subset $\cG_3\subset \cG_1\cap\cG''$ of $\xX^1(M,\Gamma)$ such that for each $X\in\cG_3$, if $\Lambda$ is a compact chain transitive set of $X$ such that $\Lambda\cap\Gamma=\emptyset$ and it contains a periodic orbit $\gamma$, then for any neighborhood $U$ of $\Lambda$, one has $\Lambda\subset \overline{W^s_U(\gamma,X)\pitchfork W^u_U(\gamma,X)}$. In other words, $\Lambda$ is contained in the relative homoclinic class $H(\gamma,U)$.
\end{proof}

\section{Codimension one partially hyperbolic dynamics}
\label{sect.codim-one}

We now study codimension one partially hyperbolic dynamics for generic vector fields in $\xX^1(M,\Gamma)$, with the aim to prove Theorem \ref{thm.codim-one}.

\subsection{Preliminaries on partially hyperbolic dynamics}
\label{sect.preliminaries-ph}

\subsubsection{Linear Poincar\'e flow}
To study the dynamics of a flow, one usually considers the normal bundle of the flow and the induced dynamics therein. 
At a regular point $x$, there exists a normal space $\cN_x=\{v\in T_xM: v\perp X(x)\}$. In general, for any invariant set $\Lambda$ of the flow such that  $\Lambda\cap\Sing(X)=\emptyset$ (such a set will be called {\em nonsingular}), we define the {\em normal bundle} over $\Lambda$ as the following:
\[\cN_{\Lambda}=\bigcup_{x\in\Lambda}\cN_x.\]
On the normal bundle it is defined the {\em linear Poincar\'e flow} $\psi_t$ as follows:
\[\psi_t(v)=\Phi_t(v)-\frac{\langle\Phi_t(v),X(\phi_t(x))\rangle}{|X(\phi_t(x))|^2} X(\phi_t(x)),\quad \forall v\in\cN_x.\]
In other words, for $v\in\cN_x$ and $t\in\mathbb{R}$, one obtains $\psi_t(v)$ as the orthogonal projection of $\Phi_t(v)$ on the normal space at $\phi_t(x)$.
Inspired by the pioneering work of Liao \cite{Lia80,Lia89}, Li-Gan-Wen \cite{LGW05} obtained the {\em extended linear Poincar\'e flow}, which extends the linear Poincar\'e flow ``to singularities''.

\subsubsection{Normal manifolds and local holonomies}
For any $x\in M\setminus\Sing(X)$ and $r>0$, define $\cN_x(r)=\{v\in\cN_x:|v|\leq r\}$ and
\[N_x(r)=\exp_x(\cN_x(r)),\]
where $\exp_x:T_xM\to M$ is the exponential map. One can take $\beta_0>0$ small such that for any $x\in M\setminus\Sing(X)$, the exponential map $\exp_x$ is a diffeomorphism from $\cN_x(\beta_0)$ onto its image.
For $r\in(0,\beta_0]$, we call $N_x(r)$ a {\em normal manifold} (of size $r$) at $x$.

Note that for each $x\in M\setminus\Sing(X)$, there is $r_x>0$ such that the flow $\phi_t$ induces a local holonomy map $P_{x,t}$ from $N_x(r_x)$ to $N_{\phi_t(x)}(\beta_0)$, for any $t\in[-1,1]$.
It is shown in \cite[Lemma 2.4]{GY} that for any $T>0$, there is $\beta_T>0$ such that for any $x\in M\setminus\Sing(x)$, $P_{x,t}$ is well-defined on $N_x(\beta_T|X(x)|)$ for any $t\in[-T,T]$.
Given $x\in M\setminus\Sing(X)$, $t\in(0,T]$, $y\in N_x(\beta_T|X(x)|)$, we denote $\tau_{x}(t,y)$ the smallest positive number such that $\phi_{\tau_x(t,y)}(y)\in N_{\phi_t(x)}(\beta_0)$. Note that we have $\phi_{\tau_x(t,y)}(y)=P_{x,t}(y)$.

\begin{lemma}[{\cite[Lemma 2.7]{PYY2}}]
  \label{lem.tau-lip}
  Given $T>0$. For every $x\in M\setminus\Sing(X)$ and $t\in(0,T]$, $\tau_{x}(t,y)$ is differentiable with respect to $y\in N_x(\beta_T|X(x)|)$; furthermore, there exists $K>0$ such that for any $x\in M\setminus\Sing(X)$ and $t\in(0,T]$, it holds
  \[|X(x)|\cdot \sup\left\{\|D_y\tau_{x}(t,y)\|: y\in N_x(\beta_T|X(x)|)\right\}\leq K.\]
\end{lemma}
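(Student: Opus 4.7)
The plan is to obtain $\tau_x(t,y)$ as the solution of an implicit equation and apply the implicit function theorem, then unpack the resulting derivative formula. Since $N_{\phi_t(x)}(\beta_0)=\exp_{\phi_t(x)}(\cN_{\phi_t(x)}(\beta_0))$ is locally cut out by the orthogonality condition $\langle\exp^{-1}_{\phi_t(x)}(\cdot),X(\phi_t(x))\rangle=0$, I set
\[
G(y,s):=\bigl\langle \exp^{-1}_{\phi_t(x)}(\phi_s(y)),\,X(\phi_t(x))\bigr\rangle
\]
for $y\in N_x(\beta_T|X(x)|)$ and $s$ in a neighborhood of $t$. Then $\tau_x(t,y)$ is the unique positive root of $G(y,\cdot)=0$ near $t$, and the desired conclusions reduce to estimating $\partial_sG$ from below and $D_yG$ from above, both in terms of $|X(\phi_t(x))|$.

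Differentiating under the pairing,
\[
\partial_sG(y,s)=\bigl\langle D\exp^{-1}_{\phi_t(x)}\!\big|_{\phi_s(y)}\cdot X(\phi_s(y)),\,X(\phi_t(x))\bigr\rangle,
\]
which equals $|X(\phi_t(x))|^2$ at $(y,s)=(x,t)$. The main obstacle is to turn this into a uniform lower bound on $N_x(\beta_T|X(x)|)$ that does not degenerate as $x\to\Sing(X)$. The key balancing act is that the domain radius $\beta_T|X(x)|$ scales with $|X(x)|$, the same scale as the flow speed $|X(\phi_t(x))|$ (since $X(\phi_t(x))=\Phi_tX(x)$). Concretely, $\phi_{\tau_x(t,y)}(y)=P_{x,t}(y)$ lies within $O(\beta_T|X(\phi_t(x))|)$ of $\phi_t(x)$, with constants controlled by $\|\Phi_{\pm t}\|$ and the derivative of the holonomy map. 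Hence $D\exp^{-1}|_{\phi_s(y)}$ is within a bounded factor of the identity and $X(\phi_s(y))$ is a small perturbation of $X(\phi_t(x))$ on the relevant scale. Shrinking $\beta_T$ (which remains compatible with \cite[Lemma 2.4]{GY}) if necessary gives $\partial_sG\ge\tfrac{1}{2}|X(\phi_t(x))|^2$, so the implicit function theorem yields differentiability of $\tau_x$ in $y$ together with
\[
D_y\tau_x(t,y)=-\frac{D_yG(y,\tau_x(t,y))}{\partial_sG(y,\tau_x(t,y))}.
\]

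For the numerator, viewing $y\in N_x(\beta_T|X(x)|)$ as varying with tangent space $\cN_x$, the chain rule gives
\[
D_yG(y,s)\cdot v=\bigl\langle D\exp^{-1}_{\phi_t(x)}\!\big|_{\phi_s(y)}\circ\Phi_s(v),\,X(\phi_t(x))\bigr\rangle,\qquad v\in\cN_x,
\]
so $\|D_yG\|\le C_1\|\Phi_s\|\cdot|X(\phi_t(x))|$ with $C_1$ bounding $\|D\exp^{-1}\|$ on the relevant region. Combining with the lower bound on $\partial_sG$ yields $\|D_y\tau_x(t,y)\|\le 2C_1\|\Phi_s\|/|X(\phi_t(x))|$.

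To absorb the prefactor $|X(x)|$, I use once more $X(\phi_t(x))=\Phi_tX(x)$, which gives $|X(x)|\le\|\Phi_{-t}\|\cdot|X(\phi_t(x))|$, so
\[
|X(x)|\cdot\|D_y\tau_x(t,y)\|\le 2C_1\|\Phi_s\|\cdot\|\Phi_{-t}\|.
\]
For $s$ in a bounded window around $t\in(0,T]$, all factors on the right are bounded uniformly in $x\in M$ by compactness of $M$ and continuity of the tangent flow, yielding the required constant $K$.
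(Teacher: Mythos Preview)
The paper does not give its own proof of this lemma; it simply cites \cite[Lemma~2.7]{PYY2}. Your argument via the implicit function theorem is the natural one and is essentially correct: the defining equation $G(y,s)=0$ for $\tau_x(t,y)$, the computation $\partial_sG(x,t)=|X(\phi_t(x))|^2$, and the scaling observation that the domain radius $\beta_T|X(x)|$ is comparable to $|X(\phi_t(x))|$ (via $X(\phi_t(x))=\Phi_tX(x)$) are exactly the right ingredients. The final reduction $|X(x)|\le\|\Phi_{-t}\|\,|X(\phi_t(x))|$ is also the correct way to absorb the prefactor.

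Two places deserve a slightly firmer hand. First, to claim $\partial_sG\ge\tfrac12|X(\phi_t(x))|^2$ uniformly on the graph of $\tau_x(t,\cdot)$ you use that $P_{x,t}(y)$ lies within $O(\beta_T|X(\phi_t(x))|)$ of $\phi_t(x)$; this follows once you know the holonomy $P_{x,t}$ has derivative bounded uniformly in $x$ and $t\in[0,T]$ on the scaled normal disks, which is part of the package in \cite[Lemma~2.4]{GY} but should be stated explicitly. Second, the phrase ``shrinking $\beta_T$ if necessary'' hides the check that the new $\beta_T$ depends only on $T$, the Lipschitz constant of $X$, and the geometry of $M$ (injectivity radius, bound on $\|D\exp^{-1}\|$), and not on $x$; this is true by compactness of $M$, and worth one sentence. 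With these two points made explicit, the proof is complete.
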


\subsubsection{Dominated splitting and invariant manifolds}
\label{sect.invariant-manifolds}
Let $\Lambda$ be a compact invariant set of the flow.
A $\psi_t$-invariant splitting $\cN_{\Lambda\setminus\Sing(X)}=\Delta^{cs}\oplus \Delta^{cu}$ is said to be {\em dominated}, if there exists $T>0$ (which will be called a {\em domination constant}) such that for any unit vectors $v^{cs}\in \Delta^{cs}$, $v^{cu}\in \Delta^{cu}$, it holds
\[|\psi_t(v^{cs})|/|\psi_t(v^{cu})|<1/2,\quad \forall t\geq T.\]
It is well-known that domination implies existence of local invariant manifolds \cite{HPS}. For flows with singularities, one needs to be more careful with the sizes of the invariant manifolds.
\begin{lemma}[{\cite[Lemma 2.15]{GY}}]
\label{lem.plaque-families}
  Let $\Lambda$ be a compact invariant set of $X\in\xX^1(M)$. Suppose there exists a dominated splitting $\cN_{\Lambda\setminus\Sing(X)}=\Delta^{cs}\oplus \Delta^{cu}$ for the linear Poincar\'e flow $\psi_t$. Then for any $T>0$, there exist $\xi>0$ and two families of continuous $C^1$ embedded submanifolds $W^{cs}(x)$, $W^{cu}(x)$, with $x\in\Lambda\setminus\Sing(X)$, such that
  \begin{enumerate}
    \item $W^{cs}(x), W^{cu}(x)\subset N_x(\xi|X(x)|)$;
    \item $T_xW^{cs}(x)=\Delta^{cs}(x)$ and $T_xW^{cu}(x)=\Delta^{cu}(x)$;
    \item for any $\vep>0$, there is $\delta>0$ such that for any $x\in\Lambda\setminus\Sing(X)$, one has
        \[P_{x,T}(W^{cs}_{\delta|X(x)|}(x))\subset W^{cs}_{\vep|X(x)|}(\phi_T(x)),\quad P_{x,T}(W^{cu}_{\delta|X(x)|}(x))\subset W^{cu}_{\vep|X(x)|}(\phi_T(x)),\]
        where $W^{\zeta}_{a|X(x)|}(x)=W^{\zeta}(x)\cap N_x(a|X(x)|)$ for $\zeta\in\{cs,cu\}$ and $a\in (0,\xi]$.
  \end{enumerate}
\end{lemma}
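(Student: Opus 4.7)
The plan is to reduce the statement to the classical Hirsch--Pugh--Shub plaque family theorem by rescaling each normal manifold so that the local holonomies act on domains of uniform size. The rescaling factor at a regular point $x$ will be $|X(x)|$, which is precisely what accounts for the factor $|X(x)|$ appearing in the conclusion.

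First I would fix $T>0$ (large enough that the domination gives a definite gap between the contraction and expansion rates) and consider the holonomies $P_{x,T}\colon N_x(\beta_T|X(x)|)\to N_{\phi_T(x)}(\beta_0)$, which are $C^1$ with $D_0 P_{x,T}=\psi_T|_{\cN_x}$. Their domains shrink to the origin as $x$ approaches $\Sing(X)$, so HPS cannot be applied directly. To remove this degeneracy, I would choose continuous orthonormal frames on $\cN$ and introduce the rescaled chart $\wt{\exp}_x\colon\cN_x(\beta_T)\to M$, $\wt{\exp}_x(v)=\exp_x(|X(x)|v)$, together with the rescaled holonomy
\[
\wt{P}_{x,T}(v)=|X(\phi_T(x))|^{-1}\exp_{\phi_T(x)}^{-1}\!\circ P_{x,T}\!\circ\exp_x(|X(x)|v),
\]
which is defined on the ball $\cN_x(\beta_T)$ of uniform size.

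Next I would verify that $\{\wt{P}_{x,T}\}_{x\in\Lambda\setminus\Sing(X)}$ is a uniformly $C^1$ bundle map whose linearization at the origin equals $\psi_T$ read in the chosen frames. Writing $P_{x,T}(y)=\phi_{\tau_x(T,y)}(y)$ and differentiating, the Jacobian of $P_{x,T}$ contains a term involving $X(\phi_{\tau_x(T,y)}(y))\otimes D_y\tau_x(T,y)$. Lemma \ref{lem.tau-lip} bounds $\|D_y\tau_x(T,y)\|$ by $K/|X(x)|$, and the rescaling prefactors $|X(x)|$ and $|X(\phi_T(x))|^{-1}$ kill the remaining $|X|$-dependence, producing uniform $C^1$ bounds for $\wt{P}_{x,T}$ on $\cN_x(\beta_T)$. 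The pulled-back splitting $\Delta^{cs}\oplus\Delta^{cu}$ remains dominated for this family with the same domination constant.

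Then I would apply the Hirsch--Pugh--Shub plaque family theorem to $\{\wt{P}_{x,T}\}$, viewed as a bundle endomorphism over the closure of $\Lambda\setminus\Sing(X)$ in a suitable extension of the normal bundle (the extended linear Poincar\'e flow framework provides the necessary compactification at singularities). This yields continuous families of $C^1$ plaques $\wt{W}^{cs}(x),\wt{W}^{cu}(x)\subset\cN_x(\xi)$ of some uniform size $\xi>0$, tangent to $\Delta^{cs}(x),\Delta^{cu}(x)$, and locally $\wt{P}_{x,T}$-invariant. Undoing the rescaling via $W^{\zeta}(x)=\exp_x(|X(x)|\wt{W}^{\zeta}(x))$ for $\zeta\in\{cs,cu\}$ produces plaques of size proportional to $|X(x)|$, giving (1) and (2); item (3) is then a direct transcription of the local invariance of $\wt{P}_{x,T}$ using the definition of $\wt{P}_{x,T}$ in terms of $P_{x,T}$. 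The main obstacle is precisely the uniform $C^1$ estimate on $\wt{P}_{x,T}$ near $\Sing(X)$: both $D_y\tau_x(T,\cdot)$ and $\Phi_{\tau_x(T,y)}$ individually blow up as $|X(x)|\to 0$, and only after combining them with the rescaling factors do they produce bounded derivatives. Lemma \ref{lem.tau-lip} is the sharp estimate that makes this cancellation work, and organizing it cleanly is the technical heart of the reduction to HPS.
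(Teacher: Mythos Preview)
The paper does not prove this lemma: it is quoted verbatim as \cite[Lemma 2.15]{GY} and used as a black box, with no argument supplied. So there is no ``paper's own proof'' to compare against here.

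That said, your outline is the standard route to this result and is essentially how the cited reference proceeds: rescale the normal charts by $|X(x)|$ so that the holonomies $\wt{P}_{x,T}$ act on balls of uniform radius, check uniform $C^1$ bounds (Lemma~\ref{lem.tau-lip} is exactly the estimate needed to control the $D_y\tau_x$ term), compactify via the extended linear Poincar\'e flow of \cite{LGW05} so that HPS applies over a compact base, and then undo the rescaling. One point to be slightly more careful about: the compactification step is not merely cosmetic. The HPS plaque family theorem requires a continuous bundle map over a compact base, and $\Lambda\setminus\Sing(X)$ is not compact when $\Lambda$ contains singularities; the extended linear Poincar\'e flow supplies a genuine compact bundle over the closure of the projectivized directions, and one must verify that the rescaled holonomies extend continuously there. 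You allude to this but do not spell it out; in a full proof it deserves a line or two. Apart from that, the argument is correct and complete in outline.
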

The families $W^{cs}(x)$, $W^{cu}(x)$ are called {\em $cs$-plaques} and {\em $cu$-plaques}, respectively.
Note that when $\Lambda\cap\Sing(X)=\emptyset$, the plaque families both have a uniform size.

The dominated splitting is said to be {\em codimension-one} if $\dim \Delta^{cu}=1$. In this case, the $cu$-plaques are $C^1$ embedded curves.

\subsubsection{Hyperbolicity and partial hyperbolicity}

Let $\Lambda$ be a compact invariant set of $X$. Suppose $\Lambda$ is nonsingular.
A $\psi_t$-invariant subbundle $\Delta\subset \cN_{\Lambda}$ is called {\em contracting for $X$}, if there exist constants $C>0$, $\lambda>0$ such that for any vector $v^s\in \Delta$, it holds
\[|\psi_t(v^s)|\leq C\e^{-\lambda t}|v^s|,\quad\forall t\geq 0.\]
If $\Delta$ is contracting for $-X$, then it is called {\em expanding for $X$}.
If $\Lambda$ is nonsingular and admits a dominated splitting $\cN_{\Lambda}=\Delta^s\oplus \Delta^u$ with respect to $\psi_t$ such that $\Delta^s$ is contracting for $X$ and $\Delta^u$ is expanding for $X$, then $\Lambda$ is called {\em hyperbolic}.

Generalizing the concept of hyperbolicity, 
the set $\Lambda$ is said to admit a {\em partially hyperbolic} splitting with respect to the linear Poincar\'e flow, if there exists a $\psi_t$-invariant splitting $\cN_{\Lambda\setminus\Sing(X)}=\Delta^{s}\oplus \Delta^{cu}$ which is dominated and $\Delta^s$ is contracting for $X$.

One also defines partially hyperbolicity with respect to the tangent flow.  We say that $\Lambda$ is {\em partially hyperbolic with respect to the tangent flow} if there exists a $\Phi_t$-invariant splitting of the tangent bundle $T_{\Lambda}M=E^s\oplus E^{cu}$ such that
\begin{itemize}
  \item $E^s$ is {\em dominated} by $E^{cu}$: there exists $T>0$ such that for any unit vectors $v^s\in E^s$, $v^{cu}\in E^{cu}$, it holds
      \[|\Phi_t(v^s)|/|\Phi_t(v^{cu})|<1/2,\quad \forall t\geq T;\]
  \item $E^s$ is {\em contracting} for $X$: there exist constants $C>0$, $\lambda>0$ such that for any vector $v^s\in E^s$, it holds
      \[|\Phi_t(v^s)|\leq C\e^{-\lambda t}|v^s|,\quad\forall t\geq 0.\]
\end{itemize}
Note that the contracting property of $E^s$ implies that $X(x)\in E^{cu}(x)$ for all $x\in\Lambda$. In fact, suppose $X(x)\notin E^{cu}(x)$ for some regular $x$, then one can show that $\|X(\phi_{t}(x))\|$ grows to infinity as $t\to-\infty$, which is absurd.

It turns out that if $\Lambda$ is nonsingular, the two versions of partial hyperbolicity are equivalent, as shown by the following lemma.
\begin{lemma}\label{lem.partial-hyperbolicity}
  Suppose $\Lambda$ is a compact invariant set of $X$, $\Lambda\cap\Sing(X)=\emptyset$. Then it admits a partially hyperbolic splitting $T_{\Lambda}M=E^s\oplus E^{cu}$ with respect to the tangent flow if and only if its normal bundle admits a partially hyperbolic splitting $\cN_{\Lambda}=N^s\oplus N^{cu}$ with respect to $\psi_t$. Moreover, one has
  \[N^s\oplus \langle X\rangle =E^s\oplus \langle X\rangle, \quad\text{and}\quad N^{cu}\oplus \langle X\rangle =E^{cu}.\]
\end{lemma}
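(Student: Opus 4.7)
The plan is to use the orthogonal projection $\pi_x:T_xM\to \cN_x$ along $X(x)$ as the bridge between the two splittings: it satisfies $\pi_{\phi_t(x)}\circ \Phi_t=\psi_t\circ \pi_x$, and by compactness and nonsingularity of $\Lambda$ there are constants $0<m\le M$ with $m\le|X(y)|\le M$ for every $y\in\Lambda$. The ``moreover'' identities will be built in automatically by the construction in each direction.

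For $(\Rightarrow)$ I will use the remark preceding the lemma, which gives $X\in E^{cu}$, so $E^{cu}$ is transverse to $\cN_\Lambda$. Set $N^{cu}:=E^{cu}\cap \cN_\Lambda$ (of dimension $\dim E^{cu}-1$) and $N^s:=\pi(E^s)$. The first is $\psi_t$-invariant because $\psi_tv$ differs from $\Phi_tv$ only by a multiple of $X(\phi_t(x))\in E^{cu}$, and the second because $\pi$ intertwines the flows. A dimension count together with $E^s\cap\langle X\rangle=0$ gives $\cN_\Lambda=N^s\oplus N^{cu}$. The uniform transversality of $E^s$ to $\langle X\rangle$ (compactness of $\Lambda$ and continuity of $E^s$) implies $|\pi v^s|\asymp|v^s|$ on $E^s$, which transfers the exponential contraction of $\Phi_t|_{E^s}$ to contraction of $\psi_t|_{N^s}$; an analogous norm comparison yields the domination.

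For $(\Leftarrow)$ I set $E^{cu}:=N^{cu}\oplus \langle X\rangle$, which is $\Phi_t$-invariant because $\Phi_tv=\psi_tv+\beta(t,x,v)X(\phi_t(x))\in N^{cu}(\phi_t(x))\oplus\langle X(\phi_t(x))\rangle$ for $v\in N^{cu}(x)$ and $\Phi_tX(x)=X(\phi_t(x))$. Constructing $E^s$ is the heart of the argument: I look for a continuous field of linear functionals $A_x:N^s(x)\to\mathbb{R}$ so that
\[E^s(x):=\{v+A_x(v)X(x):v\in N^s(x)\}\]
is $\Phi_t$-invariant. Using the same decomposition $\Phi_tv=\psi_tv+\beta(t,x,v)X(\phi_t(x))$ for $v\in N^s(x)$, with $\beta$ linear in $v$ and continuous in $(t,x)$, the invariance condition translates to the cocycle equation $A_{\phi_t(x)}(\psi_tv)=A_x(v)+\beta(t,x,v)$. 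Fixing a time step $T>0$ and iterating, I solve this by the telescoping series
\[A_x(v)=-\sum_{n=0}^{\infty}\beta\bigl(T,\phi_{nT}(x),\psi_{nT}(v)\bigr),\]
which converges uniformly because $|\psi_{nT}(v)|\le Ce^{-n\lambda T}|v|$ from the contraction of $\psi_t|_{N^s}$ while $\beta(T,\cdot,\cdot)$ is linear in $v$ and bounded uniformly on $\Lambda$. Linearity of $A_x$, continuity in $x$, and $\Phi_T$-invariance of the graph follow at once; invariance under $\Phi_t$ for every $t$ propagates because $\Phi_t$ commutes with $\Phi_T$ and the cocycle equation is then consistent across time scales by the uniqueness of the contractive solution.

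The main obstacle I foresee is verifying the full partial hyperbolicity of $T_\Lambda M=E^s\oplus E^{cu}$ under $\Phi_t$ in the $(\Leftarrow)$ direction. Contraction of $E^s$ is immediate from the uniform bound on $A_x$ combined with the contraction of $\psi_t|_{N^s}$. Domination is more delicate, since for $v^{cu}\in E^{cu}$ the norms $|\Phi_tv^{cu}|$ and $|\psi_t\pi(v^{cu})|$ need not be comparable: the tangent flow can rotate $v^{cu}$ toward $\langle X\rangle$. To handle this I plan to work in the orthonormal basis of $E^{cu}(x)$ consisting of $X(x)/|X(x)|$ followed by an orthonormal basis of $N^{cu}(x)$; in this basis $\Phi_t|_{E^{cu}}$ is block upper triangular, with diagonal blocks the scalar $|X(\phi_t(x))|/|X(x)|\in[m/M,M/m]$ and the matrix of $\psi_t|_{N^{cu}}$, which yields a uniform bound $m(\Phi_t|_{E^{cu}})\ge c\,m(\psi_t|_{N^{cu}})$. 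Combined with the exponential decay of $\|\Phi_t|_{E^s}\|$, this delivers the required tangent-flow domination and completes the proof.
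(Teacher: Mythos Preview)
The paper does not give its own proof of this lemma: immediately after the statement it says ``One can refer to Theorem~2.27 in \cite{APa} for a similar result, and its proof can be readily adapted.'' So there is no in-paper argument to compare against beyond that citation. Your plan---using the intertwining $\pi_{\phi_t(x)}\circ\Phi_t=\psi_t\circ\pi_x$, taking $N^{cu}=E^{cu}\cap\cN_\Lambda$ and $N^s=\pi(E^s)$ for $(\Rightarrow)$, and building $E^s$ as the graph of a functional $A_x$ obtained from a telescoping series for $(\Leftarrow)$---is the standard route and is essentially what the cited reference does. The $(\Rightarrow)$ domination does go through by a norm comparison: since $|\psi_{-t}v'|\le|\Phi_{-t}v'|$ for $v'\in N^{cu}$, one gets $m(\psi_t|_{N^{cu}})\ge m(\Phi_t|_{E^{cu}})$, and together with $\|\psi_t|_{N^s}\|\asymp\|\Phi_t|_{E^s}\|$ this transfers the tangent-flow domination to $\psi_t$.

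There is, however, a genuine gap in the $(\Leftarrow)$ domination step. The block upper-triangular form of $\Phi_t|_{E^{cu}}$ in the basis $\{X/|X|\}\cup\{\text{o.n.\ basis of }N^{cu}\}$ is correct, but it does \emph{not} yield $m(\Phi_t|_{E^{cu}})\ge c\,m(\psi_t|_{N^{cu}})$ with $c$ independent of $t$. For a block $\bigl(\begin{smallmatrix} a & b\\ 0 & D\end{smallmatrix}\bigr)$ one controls the conorm by the diagonal blocks only when the off-diagonal $b$ is bounded; here $b$ records the drift $\langle\Phi_t v,X(\phi_t(x))\rangle/|X(\phi_t(x))|$ for unit $v\in N^{cu}$, and nothing prevents this from growing with $t$. (Linear model: $a=1$, $D=\mathrm{Id}$, $\|b\|=t$; then $m(D)=1$ but the conorm of the full matrix is $\sim t^{-1}$.) Note also that the easy inequality goes the \emph{wrong} way here: the argument above gives $m(\psi_t|_{N^{cu}})\ge m(\Phi_t|_{E^{cu}})$, not the reverse. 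So the last sentence of your proposal does not establish what you need.

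The fix is not deep but requires reorganizing the estimate. One workable route: for unit $w=v+s\,X/|X|\in E^{cu}$ with $v\in N^{cu}$ you always have $|\Phi_t w|\ge|\psi_t v|$, so when $|v|$ stays bounded below your bound $\|\Phi_t|_{E^s}\|\le C\|\psi_t|_{N^s}\|$ and the $\psi_t$-domination already give the ratio estimate. For $w$ close to $\langle X\rangle$ one instead checks the domination inequality at a \emph{fixed} time step (where the off-diagonal is bounded by compactness of $\Lambda$, making your triangular estimate legitimate) and then iterates multiplicatively. Interleaving these two regimes carefully is precisely the bookkeeping the paper avoids by citing \cite{APa}.
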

One can refer to Theorem 2.27 in \cite{APa} for a similar result, and its proof can be readily adapted to give a proof of Lemma \ref{lem.partial-hyperbolicity}.

\subsection{Minimally nonhyperbolic set}
A compact invariant set $\Lambda$ is called {\em minimally nonhyperbolic} if $\Lambda$ itself is not hyperbolic, but every proper compact invariant subset of $\Lambda$ is hyperbolic.
\begin{lemma}\label{lem.sinks-accumulation}
  Let $X$ be a generic vector field in $\xX^1(M,\Gamma)$. Suppose $\Lambda$ is a chain transitive compact invariant set of $X$ such that $\Lambda\cap(\Gamma\cup\Sing(X))=\emptyset$.
  If $\Lambda$ admits a codimension-one partially hyperbolic splitting $\cN_{\Lambda}=\Delta^s\oplus \Delta^c$ and is minimally nonhyperbolic, then
  it is the Hausdorff limit of periodic sinks.
\end{lemma}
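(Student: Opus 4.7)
The plan is to show that $\Lambda$ is an index $(\dim M-1)$ fundamental limit within $\xX^1(M,\Gamma)$ and then invoke Theorem \ref{thm.f-limit}. Accordingly, we restrict $X$ to the intersection of the residual sets furnished by Theorems \ref{thm.Gamma-KS}, \ref{thm.genericity} and \ref{thm.f-limit}. Since $\Lambda$ is chain transitive and disjoint from $\Gamma$, Corollary \ref{cor.chain-transitive} produces a sequence $X_n\in\xX^1(M,\Gamma)$ with $X_n\to X$ in the $C^1$ topology and hyperbolic periodic orbits $\gamma_n$ of $X_n$ converging to $\Lambda$ in the Hausdorff topology. For $n$ large, $\gamma_n$ lies in a small neighborhood of $\Lambda$ where, by continuity, the dominated splitting extends, so $\cN_{\gamma_n}=\Delta^s_n\oplus\Delta^c_n$ with $\Delta^s_n$ uniformly contracting and $\dim\Delta^c_n=1$. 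The hyperbolicity type of $\gamma_n$ is then determined by the sign of its central Lyapunov exponent $\lambda^c(\gamma_n)$, with $\lambda^c(\gamma_n)<0$ corresponding to a sink of index $\dim M-1$.

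The crux is to arrange that $\lambda^c(\gamma_n)<0$, or at least that $\lambda^c(\gamma_n)$ can be made arbitrarily close to $0$ along a subsequence. Suppose for contradiction that $\lambda^c(\gamma_n)\ge\lambda_0>0$ eventually. Passing to a weak-$*$ limit of the normalized invariant measures supported on $\gamma_n$, we obtain an $X$-invariant probability measure $\mu$ supported in $\Lambda$ whose central exponent is at least $\lambda_0$, by continuity of $\log\|\psi_1|_{\Delta^c}\|$ along a one-dimensional dominated subbundle. Combined with the uniform contraction of $\Delta^s$, a Liao--Pliss type selection on the linear Poincar\'e flow propagates this positivity from $\mu$-generic points to every orbit in $\Lambda$, so that $\Delta^c$ becomes uniformly expanding on all of $\Lambda$. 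But then $\Lambda$ is hyperbolic, contradicting the minimally nonhyperbolic hypothesis.

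Once $\lambda^c(\gamma_n)$ is known to be negative or arbitrarily close to $0$, a Franks-type perturbation supported in an arbitrarily small tubular neighborhood of $\gamma_n$ deforms the central Floquet multiplier into the open unit disk, turning $\gamma_n$ into a sink $\widetilde\gamma_n$ of some $\widetilde X_n$ that is $C^1$-close to $X_n$. Because $\gamma_n\cap\Gamma=\emptyset$ for $n$ large and the perturbation is localized, $\widetilde X_n\in\xX^1(M,\Gamma)$ and still $\widetilde\gamma_n\to\Lambda$. Hence $\Lambda$ is an index $(\dim M-1)$ fundamental limit within $\xX^1(M,\Gamma)$, and applying Theorem \ref{thm.f-limit} to $X\in\cG_2$ yields that $\Lambda$ is the Hausdorff limit of periodic sinks of $X$.

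The main obstacle is the second paragraph: rigorously propagating positivity of the central exponent from approximating periodic orbits of nearby vector fields to the whole set $\Lambda$, while using the minimally nonhyperbolic hypothesis to prevent a proper invariant subset from absorbing all the non-hyperbolic behaviour. This is where the paper's flow-theoretic machinery --- the extended linear Poincar\'e flow, domination constants, and plaque families of Lemma \ref{lem.plaque-families} --- must be brought to bear, and where a careful Pliss selection along the linear Poincar\'e flow replaces the unavailable $C^2$ Pujals--Sambarino techniques discussed in Remark \ref{rmk.codim-one-1}. The Franks-type perturbation in the third paragraph, by contrast, is routine precisely because $\Lambda$ is disjoint from $\Gamma$ and the perturbation can be localized inside $\xX^1(M,\Gamma)$.
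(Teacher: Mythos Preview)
Your overall strategy --- show that $\Lambda$ is an index $(\dim M-1)$ fundamental limit within $\xX^1(M,\Gamma)$ and then invoke Theorem \ref{thm.f-limit} --- is exactly the paper's, and the Franks-lemma endgame is fine. The genuine gap is the second paragraph.

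From the contradiction hypothesis ``the particular sequence $\gamma_n$ produced by Corollary \ref{cor.chain-transitive} has $\lambda^c(\gamma_n)\geq\lambda_0>0$'' you cannot deduce that $\Lambda$ is hyperbolic. The weak-$*$ limit $\mu$ of the measures $\delta_{\gamma_n}$ is supported in $\Lambda$, but there is no reason for $\mathrm{supp}(\mu)=\Lambda$: Hausdorff convergence of the orbits does not prevent the \emph{measures} from concentrating on a proper compact invariant subset. By minimal nonhyperbolicity such a subset is hyperbolic, so $\mu$ having positive central exponent is perfectly consistent with $\Lambda$ being nonhyperbolic --- no contradiction. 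A Liao--Pliss selection gives you hyperbolic \emph{times} along $\gamma_n$ (or along $\mu$-typical orbits), not uniform expansion on every orbit of $\Lambda$; it does not ``propagate positivity to the whole set'' in the sense you need.

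The paper sidesteps this entirely by reversing the construction. Since $\Lambda$ is nonhyperbolic and $\dim\Delta^c=1$, there is a point $x_0\in\Lambda$ with $\log\|\psi_t|_{\Delta^c(x_0)}\|\leq 0$ for all $t\geq 0$; any weak-$*$ limit of $\delta_{x_0,T}$ yields an invariant measure with $\int\log\|\psi_t|_{\Delta^c}\|\,d\mu\leq 0$, and after ergodic decomposition one may take $\mu$ ergodic. Now minimal nonhyperbolicity forces $\mathrm{supp}(\mu)=\Lambda$ (a proper support would be hyperbolic, contradicting the inequality). The Ergodic Closing Lemma, applied to a typical point of $\mu$, then produces $Y\in\xX^1(M,\Gamma)$ arbitrarily $C^1$-close to $X$ with a periodic orbit $\gamma$ that simultaneously (i) is Hausdorff-close to $\Lambda$ \emph{because} $\mathrm{supp}(\mu)=\Lambda$, and (ii) has central exponent $\leq\vep$ by construction. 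This gives direct control of $\lambda^c(\gamma)$ rather than an indirect contradiction argument, and the Franks step is then exactly as you describe. The missing ingredient in your proposal is thus the Ergodic Closing Lemma applied to a well-chosen measure, not a Pliss-type selection.
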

To prove the lemma, let us first recall the following definitions.
A regular point $x\in M$ is called {\em strongly closable} if for any $C^1$ neighborhood $\cU$ of $X$, for any $\vep>0$, there exist $Y\in\cU$, $L>0$ and a periodic point $y$ of $Y$ such that
\begin{itemize}
  \item $X(z)=Y(z)$ for all $z\in M\setminus \bigcup_{t\in[-L,0]}\phi_t(B(x,\vep))$;
  \item $d(\phi^X_t(x),\phi^Y_t(y))<\vep$ for all $t\in[0,\tau(y)]$, where $\tau(y)$ is the period of $y$.
\end{itemize}
Let $\Sigma(X)$ be the set of strongly closable points of $X$.
The Ergodic Closing Lemma \cite{Ma, We} states that for every $\phi^X_t$-invariant probability measure $\mu$, it holds $\mu(\Sigma(X)\cup\Sing(X))=1$.
\begin{definition}
Let $\mu$ be a $\phi^X_t$-invariant ergodic probability measure, which is not the Dirac measure supported on a singularity. A point $x\in M$ is called a {\em typical point} of $\mu$, if it is strongly closable, $\omega(x)=\mathrm{supp}(\mu)$, and
\[\lim_{T\to+\infty}\delta_{x,T}=\mu,\]
where $\delta_{x,T}$ is the uniform distribution measure supported on the orbit segment $\phi_{[0,T]}(x)$, i.e. for any continuous function $f:M\to\mathbb{R}$ it holds
\[\int f\mathrm{d}\delta_{x,T}=\frac{1}{T}\int_0^T f(\phi^X_t(x))\mathrm{d}t.\]
\end{definition}

By the Ergodic Closing Lemma and the Birkhoff Ergodic Theorem, the set of typical points of $\mu$ has $\mu$-full measure.

We can now prove Lemma \ref{lem.sinks-accumulation}.
\begin{proof}[Proof of Lemma \ref{lem.sinks-accumulation}]
  Since $X$ is a generic vector field in $\xX^1(M,\Gamma)$, it is $\Gamma$-avoiding Kupka-Smale. It follows that $\Lambda$ is not a periodic orbit, since it is not hyperbolic. Moreover, there exists $x_0\in \Lambda$ such that $\log\|\psi^X_t|_{\Delta^c(x_0)}\|\leq 0$ for all $t\geq 0$. Let $\mu$ be any accumulation point of $\{\delta_{x_0,t}\}_{t\geq 0}$, where $\delta_{x_0,t}$ is the uniform distribution measure supported on the orbit segment $\phi^X_{[0,t]}(x_0)$.
  Then $\mu$ is an invariant measure supported on $\Lambda$, such that for all $t\geq 0$ it holds
  \begin{equation}\label{eq.nonhyperbolic1}
    \int\log\|\psi^X_t|_{\Delta^c}\|\mathrm{d}\mu\leq 0.
  \end{equation}
  By ergodic decomposition, we may assume that $\mu$ is ergodic.
  Since $\Lambda$ is chain transitive and minimally nonhyperbolic, the inequality implies that  $\mathrm{supp}(\mu)=\Lambda$: otherwise, $\mathrm{supp}(\mu)$ is a hyperbolic set and $\Delta^c|_{\mathrm{supp}(\mu)}$ is expanding, a contradiction to \eqref{eq.nonhyperbolic1}.

  We extend the partially hyperbolic splitting $\cN_{\Lambda}=\Delta^s\oplus \Delta^c$ continuously to a small neighborhood $U_0$ of $\Lambda$, $\overline{U_0}\cap\Gamma=\emptyset$.
  For any $\vep>0$, let $\delta>0$ and $U_1\subset U_0$ be a neighborhood of $\Lambda$ such that for any $x_1,x_2\in U_1$ satisfying $d(x_1,x_2)<\delta$ and $t\in[0,1]$, it holds
  \begin{equation}\label{eq.nonhyperbolic2}
    \left|\log\|\psi^X_t|_{\Delta^c(x_1)}\|-\log\|\psi^X_t|_{\Delta^c(x_2)}\|\right|<\vep/3.
  \end{equation}
  By continuity of dominated splitting, there exists a neighborhood $U\subset U_1$ of $\Lambda$ and a $C^1$ neighborhood $\cU$ of $X$ such that the splitting $\cN_{\Lambda}=\Delta^s\oplus \Delta^c$ extends to the maximal invariant set $\Lambda_Y$ of $Y$ in $U$, and moreover, the subbundle $\Delta^s_Y$ remains contracting (for $Y$) and
  \begin{equation}\label{eq.nonhyperbolic3}
    \left|\log\|\psi^Y_t|_{\Delta^c_Y(x)}\|-\log\|\psi^X_t|_{\Delta^c(x)}\|\right|<\vep/3,
  \end{equation}
  for all $x\in\Lambda_Y$ and $t\in [0,1]$.

  Let us take a typical point $x$ of $\mu$. There is $T_0>0$ such that for any $T>T_0$, one has
  \begin{equation}\label{eq.nonhyperbolic4}
    \left|\int \log\|\psi^X_t|_{\Delta^c}\|\mathrm{d}\delta_{x,T}-\int \log\|\psi^X_t|_{\Delta^c}\|\mathrm{d}\mu\right|<\vep/3,
  \end{equation}
  and moreover,
  \[\mathrm{d}_H\left(\phi^X_{[0,T]}(x),\Lambda\right)<\vep/2.\]
  Here $\mathrm{d}_H(\cdot,\cdot)$ stands for the Hausdorff distance.
  By the Ergodic Closing Lemma, there exists $Y\in\cU$ and a periodic point  $y\in\Lambda_Y$ such that $d(\phi^X_t(x),\phi^Y_t(y))<\min\{\delta,\vep/2\}$ for $t\in[0,\tau(y)]$. Since $x$ is not periodic, we can assume that $\tau(y)>T_0$. Let us denote by $\gamma$ the orbit of $y$ and $\delta_{\gamma}=\delta_{y,\tau(y)}$. Then
  \[\mathrm{d}_H(\gamma,\Lambda)\leq \mathrm{d}_H\left(\gamma,\phi^X_{[0,\tau(y)]}(x)\right)+ \mathrm{d}_H\left(\phi^X_{[0,\tau(y)]}(x),\Lambda\right)<\vep.\]
  Moreover, for any $t\in[0,1]$, the inequalities \eqref{eq.nonhyperbolic1}$\sim$\eqref{eq.nonhyperbolic4} imply the following:
  \begin{align*}
    \int\log\|\psi^Y_t|_{\Delta^c_Y}\|\mathrm{d}\delta_{\gamma} &= \int\log\|\psi^Y_t|_{\Delta^c_Y}\|\mathrm{d}\delta_{\gamma}- \int\log\|\psi^X_t|_{\Delta^c}\|\mathrm{d}\delta_{\gamma}\\
    &\phantom{\leq}+\int\log\|\psi^X_t|_{\Delta^c}\|\mathrm{d}\delta_{\gamma} -\int\log\|\psi^X_t|_{\Delta^c}\|\mathrm{d}\delta_{x,\tau(y)}\\
    &\phantom{\leq}+\int\log\|\psi^X_t|_{\Delta^c}\|\mathrm{d}\delta_{x,\tau(y)}
    -\int\log\|\psi^X_t|_{\Delta^c}\|\mathrm{d}\mu\\
    &\phantom{\leq}+\int\log\|\psi^X_t|_{\Delta^c}\|\mathrm{d}\mu\\
    &\leq \frac{\vep}{3}+\frac{\vep}{3}+\frac{\vep}{3}+0=\vep.
  \end{align*}
  Observe that by the Ergodic Closing Lemma, the perturbations for closing can be done in $U_0$, hence away from $\Gamma$. This shows that $Y\in\xX^1(M,\Gamma)$.

  By Franks' lemma, the periodic orbit $\gamma$ can be turned into a sink by an $\vep$-perturbation. Since $\vep$ can be arbitrarily small, one concludes that $\Lambda$ is a fundamental limit within $\xX^1(M,\Gamma)$ of periodic sinks. As $X$ is generic in $\xX^1(M,\Gamma)$, Theorem \ref{thm.f-limit} shows that $\Lambda$ is in fact the Hausdorff limit of periodic sinks of $X$ itself.
\end{proof}

\subsection{Central model for partially hyperbolic dynamics}\label{sect.central-model}
The notion of central model was introduced by Crovisier \cite{Cr10} to prove the Weak Palis Conjecture \cite{Pa00} for $C^1$ diffeomorphisms, and has been proved to be a powerful tool for studying partially hyperbolic dynamics with one-dimensional center. 
\begin{definition}
A {\em central model} is a pair $(\hat{K},\hat{f})$, where $\hat{K}$ is a compact metric space, $\hat{f}$ is a continuous map from $\hat{K}\times[0,1]$ to $\hat{K}\times[0,+\infty)$, such that
\begin{itemize}
	\item $\hat{f}(\hat{K}\times\{0\})=\hat{K}\times\{0\}$;
	\item $\hat{f}$ is a local homeomorphism in a small neighborhood of $\hat{K}\times\{0\}$;
	\item $\hat{f}$ is a skew product, {\itshape i.e.} there exist $\hat{f}_1:\hat{K}\to\hat{K}$ and $\hat{f}_2:\hat{K}\times[0,1]\to[0,+\infty)$ such that
	\[\hat{f}(x,t)=(\hat{f}_1(x),\hat{f}_2(x,t)),\quad \forall (x,t)\in\hat{K}\times[0,1].\]
\end{itemize}
As $\hat{f}$ is a skew product, the set $\hat{K}$ (naturally identified with $\hat{K}\times\{0\}$) is called the {\em base} of the central model.
\end{definition}

One says that a central model $(\hat{K},\hat{f})$ has a {\em chain-recurrent central segment} if it contains a non-trivial segment $I=\{x\}\times[0,a]$ that is contained in a chain-transitive set of $\hat{f}$. A chain-recurrent central segment is contained in the maximal invariant set in $\hat{K}\times[0,1]$. A {\em trapping strip} for $\hat{f}$ is an open set $S\subset \hat{K}\times [0,1]$ satisfying $\hat{f}(\overline{S})\subset S$, and moreover, $S\cap(\{x\}\times[0,1])$ is a non-trivial interval containing $(x,0)$ for any $x\in \hat{K}$. 
As $\hat{f}$ is a local homeomorphism, its inverse $\hat{f}^{-1}$ is well-defined in a neighborhood of $\hat{K}\times\{0\}$. A trapping strip for $\hat{f}^{-1}$ is called an {\em expanding strip} for $\hat{f}$.

The following lemma establishes a dichotomy between existence of chain-recurrent central segments and existence of arbitrarily small trapping (or expanding) strips.

\begin{lemma}[{\cite[Proposition 2.5]{Cr10}}]\label{lem.central-model-dichotomy}
	Let $(\hat{K},\hat{f})$ be a central model with a chain-transitive base, then
	\begin{itemize}
		\item either there exists a chain-recurrent central segment;
		\item or there exist some trapping strips $S$ in $\hat{K}\times[0,1]$ either for $\hat{f}$ or for $\hat{f}^{-1}$ in arbitrarily small neighborhoods of $\hat{K}\times\{0\}$.
	\end{itemize}
\end{lemma}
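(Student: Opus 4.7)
The plan is to prove the contrapositive: assuming that no chain-recurrent central segment exists, construct arbitrarily small trapping strips for $\hat f$ or for $\hat f^{-1}$. I would proceed in three main steps.

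First, I would localize. Since $\hat f$ is a local homeomorphism near $\hat K\times\{0\}$, choose $\delta_0>0$ such that $\hat f$ is a homeomorphism on a neighbourhood of $V_0:=\hat K\times[0,\delta_0]$ onto its image, and restrict attention to $V_0$. By chain transitivity of $\hat f_1$ on $\hat K$, the base $\hat K\times\{0\}$ is a single chain-transitive invariant set of $\hat f$. Define $R\subset V_0$ to be its internal chain-recurrence class, i.e.\ the set of points chain-equivalent to the base using $\varepsilon$-chains that remain in $V_0$. Then $R$ is closed, invariant, chain transitive, and contains the base.

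Second, I would use the absence of chain-recurrent central segments to show that $R$ contains no non-trivial vertical interval in any fibre, and, more strongly, that $R=\hat K\times\{0\}$ for $\delta_0$ small enough. If some fibre $\{x\}\times[0,a]$ were entirely contained in $R$, then, since $R$ itself is chain transitive, this interval would be a chain-recurrent central segment, contradicting the hypothesis. The harder scenario is when the vertical fibres of $R$ all have empty interior but $R$ nevertheless accumulates on the base from above, with points $(x_n,t_n)\in R$, $t_n\downarrow 0$. Here I would use chain transitivity of $\hat f_1$ to translate such accumulation to every fibre, exploit the skew-product form $\hat f(x,t)=(\hat f_1(x),\hat f_2(x,t))$ together with uniform continuity of $\hat f_2$ on $\hat K\times[0,1]$ to propagate chain-recurrent behaviour across nearby fibres, and then extract a Hausdorff limit of these chain-recurrent finite sets that fills out an interval in one fibre, again producing a chain-recurrent central segment. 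This limit-extraction step, where one has to turn ``many recurrent points just above the base'' into ``a whole recurrent interval in one single fibre'', is the main obstacle of the proof, and the place where chain transitivity of the base and uniform continuity of $\hat f_2$ are essential.

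Third, knowing that $R$ reduces to the base in $V_0$, I would invoke Conley's fundamental theorem for the compact topological system $(V_0,\hat f|_{V_0})$ to obtain a continuous Lyapunov function $L:V_0\to[0,1]$ which vanishes on $\hat K\times\{0\}$, is strictly positive off the base, and is strictly decreasing along every orbit whose $\omega$-limit lies in the base (and similarly for $-\infty$ if the base plays the role of a repeller). After possibly exchanging $\hat f$ for $\hat f^{-1}$, the sub-level sets $U_c:=\{L<c\}$ are open neighbourhoods of the base with $\hat f(\overline{U_c}\cap V_0)\subset U_c$, of arbitrarily small size. To turn $U_c$ into a trapping strip, set
\[
b_c(x):=\sup\{t\in[0,\delta_0]:\{x\}\times[0,t]\subset U_c\},\qquad S_c:=\{(x,t):t<b_c(x)\}.
\]
Then $b_c$ is lower semi-continuous, so $S_c$ is open; by construction $S_c\cap(\{x\}\times[0,1])$ is a non-trivial interval containing $(x,0)$; and the trapping property $\hat f(\overline{S_c})\subset S_c$ follows from the trapping property of $U_c$, the skew-product structure, and the fact that, by shrinking $c$ if necessary, the fibre dynamics $t\mapsto\hat f_2(x,t)$ is orientation-preserving and fixes $0$ uniformly in $x$ on the relevant scale, so that images of fibrewise intervals starting at $0$ are again fibrewise intervals starting at $0$. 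In the repeller case the same construction performed with $\hat f^{-1}$ yields expanding strips. This produces the desired arbitrarily small trapping or expanding strips and establishes the dichotomy.
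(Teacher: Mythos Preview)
The paper does not prove this lemma; it is quoted from \cite[Proposition~2.5]{Cr10}. Your contrapositive strategy (show that the chain-recurrence class $R$ of the base in $V_0$ reduces to the base, then build trapping or expanding strips) is precisely Crovisier's approach.

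The genuine gap is in Step~2. You correctly identify the ``harder scenario''---$R$ meeting each fibre in a set of empty interior yet still containing points off the base---as the main obstacle, but your proposed resolution (translate recurrence across fibres and extract a Hausdorff limit of chain-recurrent finite sets that ``fills out an interval'') is hand-waving and is not how the obstruction is actually removed. The mechanism that rules this scenario out is the \emph{monotonicity of the fibre maps}, which you invoke only later in Step~3: since $\hat f$ is a skew-product local homeomorphism fixing $\hat K\times\{0\}$, each $t\mapsto\hat f_2(x,t)$ is a strictly increasing homeomorphism onto its image with $\hat f_2(x,0)=0$. This forces every fibre of $R$ to be an interval. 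Indeed, given an $\varepsilon$-chain $(z_i,r_i)_{i=0}^m$ in $V_0$ from the base to $(x,t_0)$, one produces an $\varepsilon$-chain $(z_i,r'_i)$ with the same $\hat K$-coordinates and $r'_i\in[0,r_i]$, ending at any prescribed $(x,t)$ with $0\le t\le t_0$, by setting $r'_m=t$ and choosing $r'_i\in[0,r_i]$ inductively so that $\hat f_2(z_i,r'_i)=\min\{\hat f_2(z_i,r_i),\,r'_{i+1}\}$; the same device handles chains back to the base. Hence if $R$ contains any point $(x,t_0)$ with $t_0>0$ it already contains the segment $\{x\}\times[0,t_0]$, and your ``harder scenario'' simply never occurs.

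A secondary issue: in Step~3 you apply Conley's fundamental theorem to ``the compact topological system $(V_0,\hat f|_{V_0})$'', but $\hat f$ need not send $V_0$ into itself, so the theorem does not apply verbatim. One must either compactify by collapsing the exterior to a point, or---as in \cite{Cr10}---bypass the Lyapunov function entirely and construct the strips directly from the upper-semicontinuous envelope $x\mapsto\sup\{t:(x,t)\in R\}$ once $R$ is known to equal the base.
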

\begin{remark}\label{rmk.dichotomy-central-model}
  If there exists a chain-recurrent central segment, then for each $0<a<1$, there exists a chain-recurrent central segment contained in the maximal invariant set of $\hat{K}\times[0,a]$.
\end{remark}

\subsubsection{Central model for codimension-one partially hyperbolic dynamics}

Let $X\in\xX^1(M)$ and $K$ be a nonsingular compact invariant set of $X$. Assume that $K$ admits a codimension-one partially hyperbolic splitting $\cN_{K}=\Delta^s\oplus \Delta^c$ with respect to $\psi_t$.
Let $T>0$ be a domination constant of the splitting.
Since $K$ is nonsingular and compact, there is $\delta_0>0$ such that the local holonomy map $P_{x,T}$ is well-defined on the normal manifold $N_x(\delta_0)$ for $x\in K$.
One thus defines a map $P_T$ on the family of normal manifolds $\{N_x(\delta_0)\}_{x\in K}$ such that
\[P_T|_{N_x(\delta_0)}=P_{x,T}.\]
\begin{definition}
A central model $(\hat{K},\hat{f})$ is called a {\em central model for $(K,P_T)$} if there exists a continuous map $\pi:\hat{K}\times[0,+\infty)\to M$ such that
\begin{itemize}
	\item $\pi$ semi-conjugates $\hat{f}$ and $P_T$, i.e. $P_T\circ\pi=\pi\circ\hat{f}$ on $\hat{K}\times[0,1]$;
	\item $\pi(\hat{K}\times\{0\})=K$;
	\item the collection of maps $t\mapsto \pi(\hat{x},t)$ is a continuous family of $C^1$-embeddings of $[0,+\infty)$ into $M$, parameterized by $\hat{x}\in\hat{K}$;
	\item for any $\hat{x}\in\hat{K}$, the curve $\pi(\hat{x},[0,+\infty))$ is tangent to $\Delta^c(x)$ at the point $x=\pi(\hat{x},0)$, and $\pi(\hat{x},[0,+\infty))\subset N_x(\delta_0)$.
\end{itemize}%
\end{definition}
\begin{remark}\label{rmk.central-model-0}
The set $\hat{K}\times\{0\}$ will be naturally identified with $\hat{K}$. The flow $\phi_t$ acts naturally on $\hat{K}$ such that $\pi(\phi_t(\hat{x}))=\phi_t(x)$ for all $t\in\mathbb{R}$.
We sometimes write $(\hat{K},\hat{f};\pi)$ as the central model for $(K,P_T)$, to emphasize the role played by the map $\pi$.
\end{remark}
\begin{remark}\label{rmk.central-model-1}
One can require that a central model for $(K,P_T)$ is ``small'' in the sense that for any prefixed constant $\delta\in(0,\delta_0]$ and any cone field $\cC$ of the center bundle $\Delta^c$ (continuously extended to a small neighborhood of $K$), for any $\hat{x}\in\hat{K}$, one has $\pi(\hat{x},[0,1])\subset N_{\pi(\hat{x})}(\delta)$, and it is tangent to $\cC$. 
Any curve tangent to $\cC$ will be called a central curve. In particular,  $\pi(\hat{x},[0,1])$ is a central curve at $x=\pi(\hat{x})$.
\end{remark}
\begin{remark}
  The semi-conjugacy between $\hat{f}$ and $P_T$ gives the following invariance property of central curves: \[P_{x,t}(\pi(\hat{x},[0,1]))\subset\pi(\phi_t(\hat{x}),[0,+\infty)),\quad \text{where $t$ is a multiple of $T$}.\]
  However, this may not hold for an arbitrary $t$. In Subsection \ref{sect.central-curves-p} we will see special cases where the invariance does hold for all $t\in\mathbb{R}$.
\end{remark}

Central models for $(K,P_T)$ can be constructed from the center plaque families.
By Lemma \ref{lem.plaque-families}, there exists a $cu$-plaque family $\{W^c(x)\}_{x\in K}$ with uniform size, tangent to $\Delta^c$.
Moreover, the $cu$-plaques (which will also be called {\em center plaques})
are invariant for $P_{x,T}$ in the sense that for any $\vep>0$, there exists $\delta>0$ such that for any $x\in K$,
\[P_{x,T}(W^c_{\delta}(x))\subset W^c_{\vep}(x),\]
where $W^{c}_a(x)=W^c_{\delta_0}(x)\cap N_x(a)$ with $a>0$ small. We may assume that the center plaques $W^c(x)$ are contained in $N_x(\delta_0)$ for all $x\in K$. Note that the center plaques $W^c(x)$ are $C^1$ embedded curves since $\dim \Delta^c=1$.
Then a central model $(\hat{K},\hat{f};\pi)$ can be defined such that $\pi(\hat{x},[0,1])$ is a half of the curve $W^c_{\delta}(x)$ with one endpoint $\pi(\hat{x})=x$ and $\delta>0$ small.

A detailed construction depends on whether $P_T$ preserves an orientation of the center bundle $\Delta^c$ or not, see \cite{Cr10,XZ}. One needs to consider the following two cases:
\begin{itemize}
	\item The {\em orientable} case: the bundle $\Delta^c$ is orientable;
	\item The {\em non-orientable} case: the bundle $\Delta^c$ is not orientable;
\end{itemize}
Note that in the orientable case, $P_T$ preserves the orientation of $N^c$ as it is homotopic to the identity. The following theorem gives the existence of central models for $(K,P_T)$.

\begin{theorem}[{\cite[Section 3.2]{Cr10}, \cite[Section 5.2]{XZ}}]\label{thm.central-model}
	Suppose there is a nonsingular compact $\phi_t$-invariant set $K$ admitting a partially hyperbolic splitting $\cN_K=\Delta^{s}\oplus \Delta^c$ with respect to the linear Poincar\'e flow such that $\dim \Delta^c=1$. Let $T>0$ be the domination constant for the splitting.
	\begin{itemize}
		\item In the orientable case, one can obtain two central models for $(K, P_T)$, which are denoted by $(\hat{K}_+,\hat{f}_+;\pi_+)$ and $(\hat{K}_-,\hat{f}_-;\pi_-)$, such that
		\begin{itemize}
			\item the maps $\pi_{\iota}:\hat{K}_{\iota}\cong\hat{K}_{\iota}\times \{0\}\to K$ are both homeomorphisms, for $\iota\in\{+,-\}$;
			\item for any point $x\in K$, $\iota\in\{+,-\}$, let $\hat{x}^{\iota}\in\hat{K}_{\iota}$ such that $\pi_{\iota}(\hat{x}^{\iota})=x$, then $x$ is the common endpoint of the two central curves $\pi_{+}(\hat{x}^{+},[0,+\infty))$, $\pi_{-}(\hat{x}^{-},[0,+\infty))$.
		\end{itemize}
		
		\item In the non-orientable case, there exists a central model $(\hat{K},\hat{f};\pi)$ for $(K,P_T)$ satisfying the following:
		\begin{itemize}
			\item $\pi:\hat{K}\cong\hat{K}\times\{0\}\to K$ is two-to-one: any point $x\in K$ has exactly two preimages $\hat{x}^+$ and $\hat{x}^-$ in $\hat{K}$;
			\item each point $x\in K$ is the common endpoint of the central curves $\pi(\hat{x}^{+},[0,+\infty))$ and $\pi(\hat{x}^{-},[0,+\infty))$.
		\end{itemize}
		
		\item If $K$ is chain-transitive, then in both cases and for any of the central models, the base is also chain-transitive.
	\end{itemize}
\end{theorem}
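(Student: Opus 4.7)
The plan is to construct the central models by parametrizing the center plaque family from Lemma \ref{lem.plaque-families} so that the Poincar\'e map $P_T$ becomes a skew product over $K$ (or over a double cover of $K$). The main input is that, because $K\cap\Sing(X)=\emptyset$, the $cu$-plaques $\{W^c(x)\}_{x\in K}$ are $C^1$ embedded curves of uniform size tangent to $\Delta^c$, satisfying the local invariance $P_{x,T}(W^c_\delta(x))\subset W^c_\vep(\phi_T(x))$ for appropriately small $\delta,\vep$.

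For the orientable case, I would fix a continuous global orientation of $\Delta^c$ and split each plaque at $x$ into two half-plaques $W^{c,+}(x)$ and $W^{c,-}(x)$. Arc-length parametrizing each half as a map $\alpha_\iota(x,\cdot):[0,+\infty)\to M$ with $\alpha_\iota(x,0)=x$, and rescaling if needed so that the image of $[0,1]$ is sent into $[0,+\infty)$ by the induced map, I set $\hat K_\iota=K$, $\pi_\iota=\alpha_\iota$, and define $\hat f_\iota$ as the pullback of $P_T$ via $\pi_\iota$. Because $P_T$ is isotopic to the identity through the flow, it preserves any continuous orientation of $\Delta^c$; hence each half-plaque at $x$ is mapped into the corresponding half-plaque at $\phi_T(x)$, so $\hat f_\iota$ has the skew-product form $\hat f_\iota(x,t)=(\phi_T(x),\hat f_{\iota,2}(x,t))$. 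The required properties of a central model (invariance of the base, continuity and local homeomorphism near $\hat K\times\{0\}$) follow from the corresponding properties of $P_T$ on the plaque family, and the common-endpoint statement is immediate from $\pi_+(\hat x^+,0)=x=\pi_-(\hat x^-,0)$.

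For the non-orientable case, I would pass to the orientation double cover
\[\hat K=\{(x,o): x\in K,\ o\text{ is an orientation of }\Delta^c(x)\},\]
which is connected precisely because $\Delta^c$ is non-orientable (otherwise any component would furnish a continuous global orientation). The tangent flow lifts canonically to $\hat K$ and the pulled-back center bundle is orientable by construction, so the orientable-case construction applied on $\hat K$ yields the desired central model $(\hat K,\hat f;\pi)$. The projection $\pi:\hat K\to K$ is two-to-one, the two preimages $\hat x^\pm$ of each $x$ correspond to the two orientations of $\Delta^c(x)$, and the two central curves $\pi(\hat x^\pm,[0,+\infty))$ are precisely the two half-plaques meeting at $x$.

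Finally, for chain-transitivity of the base, the orientable case is trivial since $\hat K_\iota=K$. In the non-orientable case, any $\vep$-pseudo-orbit from $x$ to $y$ in $K$ lifts to a pseudo-orbit in $\hat K$ from some preimage of $x$ to some preimage of $y$; combined with the ability to swap sheets via a chain that follows an orientation-reversing loop in $K$ (whose existence is guaranteed by the topological non-triviality of the connected cover $\hat K\to K$), one can chain from any $\hat x$ to any $\hat y$. I expect this sheet-swapping step to be the main technical point: one must realize the orientation-reversing loop by actual pseudo-orbits lifted to $\hat K$, using chain-transitivity of $K$ together with the connectedness of $\hat K$. Once this is in place, everything else reduces to routine continuity and invariance checks for the plaque family.
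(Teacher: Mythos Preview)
The paper does not actually prove this theorem: it is stated with citations to \cite{Cr10} and \cite{XZ}, and the text preceding it gives only the same informal sketch you give (build the model from the half center plaques of Lemma~\ref{lem.plaque-families}, split into orientable and non-orientable cases, and use that $P_T$ is isotopic to the identity to see the orientation is preserved). Your proposal matches this sketch and the standard construction in the cited references.

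One point worth tightening: in the non-orientable chain-transitivity argument you speak of ``an orientation-reversing loop in $K$''. Since $K$ is in general only a compact invariant set (not a manifold), there need not be any loops. The cleaner way to run this step is the one implicit in \cite{Cr10}: any $\vep$-chain in $K$ lifts to an $\vep'$-chain in $\hat K$, so every chain class of $\hat K$ projects onto $K$; the deck transformation $\sigma$ permutes chain classes, and if a class $C$ satisfied $C\cap\sigma(C)=\emptyset$ then $C$ would be a continuous section of the two-to-one cover $\hat K\to K$, contradicting non-orientability of $\Delta^c$. Hence $C=\sigma(C)$, which forces $C=\hat K$. This replaces your ``sheet-swapping loop'' and is the only spot in your outline that needed a genuine fix.
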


\subsubsection{Central curves at periodic points}
\label{sect.central-curves-p}

Let $(\hat{K},\hat{f};\pi)$ be a central model for $(K,P_T)$. 
Suppose there is a trapping strip $S$ for $\hat{f}$, i.e. $S$ is an open set contained in $\hat{K}\times[0,1]$ satisfying $\hat{f}(\overline{S})\subset S$. We denote
\[S^*=\bigcap_{n\geq 0}\hat{f}^n(\mathrm{Cl}(S)),\]
which is an $\hat{f}$-invariant set. For each $\hat{x}\in\hat{K}$, define 
\[\sigma_{\hat{x}}=\pi(S\cap(\{\hat{x}\}\times[0,1])),\quad \sigma^*_{\hat{x}}=\pi(S^*\cap(\{\hat{x}\}\times[0,1])).\]
Note that if $\pi(\hat{x})=x$ is a periodic point, then $\hat{x}$ is also a periodic point (for the induced flow $\phi_t$ on $\hat{K}$, see Remark \ref{rmk.central-model-0}). With the notations as above, the following result holds.
\begin{lemma}[{\cite[Proposition 5.4]{XZ}}]
\label{lem.periodic-c-curve}
  Suppose all periodic points in $\pi(\hat{K}\times[0,1])$ are hyperbolic. Let $\hat{x}\in \hat{K}$ such that $x=\pi(\hat{x})$ is a periodic point. Then $\sigma^*_{\hat{x}}$ is periodic in the sense that $P_{x,\tau}(\sigma^*_{\hat{x}})=\sigma^*_{\hat{x}}$, where $\tau$ is the period of $\hat{x}$. Moreover, on $\sigma^*_{\hat{x}}$ there is a finite alternate arrangement of hyperbolic saddles and sinks $p_0=x,p_1,\ldots,p_n$ with the following properties.
  \begin{itemize}
    \item Let $(p_k,p_{k+1})$ be the connected component of $\sigma^*_{\hat{x}}\setminus\{p_0,p_1,\ldots,p_n\}$ with endpoints $p_k$ and $p_{k+1}$ ($0\leq k\leq n-1$), then $(p_k,p_{k+1})$ is contained in the stable set of one endpoint (sink) and in the unstable set of the other endpoint (saddle).
    \item $p_n$ is the endpoint of $\sigma^*_{\hat{x}}$ other than $x$ and is a periodic sink whose stable set contains the segment $\sigma_{\hat{x}}\setminus\sigma^*_{\hat{x}}$.
  \end{itemize}
  Symmetric results hold when $\hat{f}$ has an expanding strip.
\end{lemma}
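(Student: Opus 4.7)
The idea is to identify $\sigma_{\hat{x}}$ with a one-dimensional $C^1$ half-open interval, realize the relevant iterate of $\hat{f}$ on $\{\hat{x}\}\times[0,1]$ as the continuous-time holonomy return $P_{x,\tau}$ on the central curve, and then read off the structure from standard hyperbolic interval dynamics. The first step is to establish the invariance $P_{x,\tau}(\sigma^*_{\hat{x}})=\sigma^*_{\hat{x}}$. The lifted flow on $\hat{K}$ returns $\hat{x}$ to itself after time $\tau$, and since $\hat{f}$ acts on $\hat{K}\times\{0\}$ as $\phi_T$, there is $N\ge 1$ with $\hat{f}^N$ fixing $\hat{x}$; the semi-conjugacy $P_T\circ\pi=\pi\circ\hat{f}$ makes $\hat{f}^N$ preserve $\{\hat{x}\}\times[0,1]$ and project via $\pi$ to the return holonomy $P_{x,\tau}$ on the central curve. (In the non-orientable case a factor of two may be involved when the two lifts of $x$ are exchanged by $\phi_\tau$, but this is already absorbed into the definition of $\tau$ as the period of $\hat{x}$.) Since $S^*$ is $\hat{f}$-invariant as the maximal invariant set in $\overline{S}$, its slice over $\hat{x}$ is $\hat{f}^N$-invariant, and projecting gives the first conclusion.

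Next I would parameterize $\sigma_{\hat{x}}$ as a half-open interval with closed endpoint $x$ and set $g=P_{x,\tau}|_{\sigma_{\hat{x}}}$. The trapping property $\hat{f}(\overline{S})\subset S$ yields $g(\overline{\sigma_{\hat{x}}})\subset\sigma_{\hat{x}}$, so $g$ is a $C^1$ self-map of the interval that fixes the endpoint $x$ and sends the opposite endpoint strictly into the interior. Every fixed point of $g$ corresponds to a periodic point of $X$ lying in $\pi(\hat{K}\times[0,1])$, hence is hyperbolic by hypothesis; moreover $g'(p)>0$ since $g$ is a holonomy of an orientation-preserving flow restricted to the chosen side of the center manifold. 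Fixed points are therefore isolated and finitely many; label them $p_0=x<p_1<\cdots<p_n$ along the interval. Each $p_k$ satisfies either $g'(p_k)<1$ (attracting on the curve, giving a periodic sink when combined with contraction of $\Delta^s$) or $g'(p_k)>1$ (repelling on the curve, giving a saddle of index $\dim\Delta^s$).

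The alternation and the endpoint behaviour then follow from standard one-dimensional hyperbolic analysis. On each open arc $(p_k,p_{k+1})$ the sign of $g(y)-y$ is constant, so iterates of $g$ are monotone and all points of the arc converge to one endpoint while being repelled from the other; this forces the two endpoints to have opposite types, giving the alternation of sinks and saddles. Because $g$ restricts to a homeomorphism of $[p_k,p_{k+1}]$ onto itself fixing the endpoints, $(p_k,p_{k+1})\subset g^m(\overline{\sigma_{\hat{x}}})$ for every $m\ge 0$, so $[p_0,p_n]\subset\sigma^*_{\hat{x}}$ and each $(p_k,p_{k+1})$ is a connected component of $\sigma^*_{\hat{x}}\setminus\{p_0,\ldots,p_n\}$ lying in the stable set of its sink endpoint and the unstable set of its saddle endpoint. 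Since $g$ sends the open end of $\sigma_{\hat{x}}$ strictly inward, $g(y)<y$ on the rightmost arc beyond $p_n$, so $p_n$ is attracting from the right and hence is a sink; every $z\in\sigma_{\hat{x}}\setminus\sigma^*_{\hat{x}}$ is confined by trapping to this rightmost arc and converges monotonely to $p_n$, yielding $\sigma_{\hat{x}}\setminus\sigma^*_{\hat{x}}\subset W^s(p_n)$. The symmetric statement for an expanding strip of $\hat{f}$ follows by applying the same argument to $g^{-1}$.

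The main obstacle is the bookkeeping of the first step: descending the discrete central-model iterate $\hat{f}^N$ to the continuous-time return holonomy $P_{x,\tau}$ in a way that is insensitive to whether $\tau$ is a multiple of the domination constant $T$ and to orientability of the center bundle. Once this reduction is cleanly in place, the rest is the standard one-dimensional hyperbolic picture on an interval.
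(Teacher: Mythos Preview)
Your overall strategy---reduce to a one-dimensional hyperbolic interval map $g=P_{x,\tau}$ on $\sigma_{\hat{x}}$ and read off the alternating saddle/sink structure---is exactly what the paper (following \cite{XZ}) does, and your analysis in the second and third paragraphs is correct.

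However, the first step contains a genuine gap, not mere bookkeeping. Your claim that ``there is $N\ge 1$ with $\hat{f}^N$ fixing $\hat{x}$'' is false in general: on the base, $\hat{f}$ acts as the time-$T$ map $\phi_T$ of the lifted flow, so $\hat{f}^N(\hat{x})=\hat{x}$ requires $NT$ to be an integer multiple of $\tau$, which fails whenever $T$ and $\tau$ are incommensurable. The semi-conjugacy $P_T\circ\pi=\pi\circ\hat{f}$ therefore only gives invariance of the central curves for times that are integer multiples of $T$, and there is no direct way to obtain $P_{x,\tau}(\sigma^*_{\hat{x}})=\sigma^*_{\hat{x}}$ from it.

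The paper's remark immediately after the lemma identifies the actual key ingredient: one proves the stronger statement $P_{x,t}(\sigma^*_{\hat{x}})=\sigma^*_{\phi_t(\hat{x})}$ for \emph{every} $t\in\mathbb{R}$, and then specializes to $t=\tau$. The argument (from \cite[Lemma 5.3]{XZ}) is a uniqueness result for center curves: both $P_{x,t}(\sigma^*_{\hat{x}})$ and $\sigma^*_{\phi_t(\hat{x})}$ are central curves through $\phi_t(x)$ whose backward iterates $P_{\cdot,-nT}$ remain of uniformly bounded length (the first because $S^*$ is $\hat{f}$-invariant inside the trapping strip, the second by construction), and any two such curves can be joined by the local strong stable manifolds; the domination then forces them to coincide. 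This is the substantive step you are missing; once it is in place, your interval-dynamics analysis completes the proof exactly as the paper indicates.
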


\begin{remark}
  In the proof of the lemma, it is shown that the central curves inside $\pi(S^*)$ along any periodic orbit $\mathrm{Orb}(x)$ are invariant for $P_{x,t}$, i.e.
  \begin{equation}\label{eq.c-invariance}
    P_{x,t}(\sigma^*_{\hat{x}})=\sigma^*_{\phi_t(\hat{x})},\quad \forall t\in\mathbb{R}.
  \end{equation}
  Thus, the properties of periodic central curves in $\pi(S^*)$ essentially follows from a simple analysis of one-dimensional dynamics.
\end{remark}

The key ingredient for the invariance \eqref{eq.c-invariance} lies in the fact that the backward iterates $P_{x,t-nT}(\sigma^*_{\hat{x}})$, $P_{x,-nT}(\sigma^*_{\phi_t(\hat{x})})$ have uniformly bounded lengths and they can be joined by the local stable manifolds. See \cite[Lemma 5.3]{XZ} for the details. This idea can be used to show the following:
let $\hat{x}\in\hat{K}$ such that $x=\pi(\hat{x})$ is a periodic point and assume that there is a central segment $I_{\hat{x}}=\{\hat{x}\}\times[0,a]$ satisfying $\hat{f}^n(I_{\hat{x}})\subset\hat{K}\times[0,1]$ for all $n\in\mathbb{Z}$, then it holds
\begin{equation}\label{eq.c-invariance-1}
  P_{x,t}(\gamma_{\hat{x}})\subset\pi(\phi_t(\hat{x}),[0,+\infty)),\quad \forall t\in\mathbb{R},
\end{equation}
where $\gamma_{\hat{x}}=\pi(I_{\hat{x}})$. We then have the following generalization of Lemma \ref{lem.periodic-c-curve}.

\begin{lemma}\label{lem.periodic-c-curve-general}
Suppose all periodic points in $\pi(\hat{K}\times[0,1])$ are hyperbolic. Let $\hat{x}\in\hat{K}$ such that $x=\pi(\hat{x})$ is a periodic point. Assume that there is a central segment $I_{\hat{x}}=\{\hat{x}\}\times[0,a]$ satisfying $\hat{f}^n(I_{\hat{x}})\subset\hat{K}\times[0,1]$ for all $n\in\mathbb{Z}$. 
Then on $\gamma_{\hat{x}}:=\pi(I_{\hat{x}})$ there exists a finite alternate arrangement of hyperbolic saddles and sinks $p_0=x,p_1,\ldots,p_n$ with the following properties.
  \begin{itemize}
    \item Let $(p_k,p_{k+1})$ be the connected component of $\gamma_{\hat{x}}\setminus\{p_0,p_1,\ldots,p_n\}$ with endpoints $p_k$ and $p_{k+1}$ ($0\leq k\leq n-1$), then $(p_k,p_{k+1})$ is contained in the stable set of one endpoint (sink) and in the unstable set of the other endpoint (saddle).
    \item Let $\gamma^*_{\hat{x}}\subset\gamma_{\hat{x}}$ be the subinterval with endpoints $p_0=x$ and $p_n$. If $p_n$ is a sink, then $\gamma_{\hat{x}}\setminus\gamma^*_{\hat{x}}$ (if not empty) is contained in the stable set of $\mathrm{Orb}(p_n)$; otherwise, if $p_n$ is a saddle, then $\gamma_{\hat{x}}\setminus\gamma^*_{\hat{x}}$ is contained in the unstable set of $\mathrm{Orb}(p_n)$. Moreover, in the latter case, suppose $\gamma_{\hat{x}}\setminus\gamma^*_{\hat{x}}$ is not empty, then there exists a periodic sink contained in the unstable set of $\mathrm{Orb}(p_n)$.
  \end{itemize}
\end{lemma}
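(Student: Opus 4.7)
The plan is to reduce the statement to one-dimensional dynamics on the central arc $\gamma_{\hat{x}}$ via the return map along the periodic orbit of $x$. The key input is the invariance \eqref{eq.c-invariance-1}, which under the hypothesis $\hat{f}^n(I_{\hat{x}}) \subset \hat{K} \times [0,1]$ for all $n \in \mathbb{Z}$ follows exactly as in the proof of \cite[Lemma 5.3]{XZ}: the backward holonomy iterates of $\gamma_{\hat{x}}$ have uniformly bounded length and can be glued via local stable manifolds, giving $P_{x,t}(\gamma_{\hat{x}}) \subset \pi(\phi_t(\hat{x}), [0,+\infty))$ for every $t \in \mathbb{R}$. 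Taking $t = \tau$ (the period of $\hat{x}$), one obtains a one-dimensional $C^1$ map
\[
g := P_{x,\tau}|_{\gamma_{\hat{x}}} : \gamma_{\hat{x}} \longrightarrow \pi(\hat{x}, [0,+\infty)),
\]
all of whose forward and backward iterates of points of $\gamma_{\hat{x}}$ stay inside the compact subarc $\pi(\hat{x}, [0,+\infty)) \cap \pi(\hat{K} \times [0,1])$.

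The second step is to classify the fixed points of $g$ on $\gamma_{\hat{x}}$. Each such fixed point corresponds to a periodic orbit of $X$ lying inside $\pi(\hat{K} \times [0,1])$, hence hyperbolic by hypothesis. Since $\gamma_{\hat{x}}$ is an embedded $C^1$ arc tangent to the one-dimensional bundle $\Delta^c$, the derivative $g'(p_k)$ at a fixed point $p_k$ equals the central Floquet multiplier of the underlying periodic orbit and satisfies $|g'(p_k)| \neq 1$. Thus each $p_k$ is either attracting (a sink of $X$, since $\Delta^s$ is already contracting) or repelling (a hyperbolic saddle of index $\dim M - 2$). Hyperbolicity guarantees isolation, and compactness of $\gamma_{\hat{x}}$ gives finiteness, so we may list the fixed points as $p_0 = x, p_1, \ldots, p_n$ in order along $\gamma_{\hat{x}}$.

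On each open interval $(p_k, p_{k+1})$ the function $g - \mathrm{id}$ has no zero, hence constant sign. Inspection of this sign near each endpoint (determined by whether that endpoint is attracting or repelling) forces one endpoint to be a sink and the other a saddle, for otherwise the signs near $p_k$ and near $p_{k+1}$ would be opposite and an extra fixed point would appear. This yields the alternating arrangement. Monotonicity of $g$ on $(p_k, p_{k+1})$ then implies that the forward $g$-orbit of each point of $(p_k, p_{k+1})$ converges to the attracting endpoint and its backward $g$-orbit converges to the repelling endpoint, so the open interval lies in the stable set of the attracting endpoint and in the unstable set of the repelling one.

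The terminal arc $\gamma_{\hat{x}} \setminus \gamma^*_{\hat{x}}$ past $p_n$ is treated by the same sign analysis. If $p_n$ is a sink, then $g - \mathrm{id} < 0$ on the tail, all forward $g$-orbits converge monotonically to $p_n$, and the tail lies in $W^s(\Orb(p_n))$. If $p_n$ is a saddle, then $g - \mathrm{id} > 0$ on the tail: backward iterates converge to $p_n$, placing the tail inside $W^u(\Orb(p_n))$, while by monotonicity together with the uniform boundedness of forward $g$-iterates inside $\pi(\hat{K} \times [0,1])$, the forward orbit of each tail point converges to a fixed point $p^*$ of $g$ lying past $p_n$. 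Since $p^* \in \pi(\hat{K} \times [0,1])$ it is hyperbolic by assumption, and one-sided monotone convergence to $p^*$ forces it to be attracting, i.e.\ a periodic sink, with $\Orb(p^*) \subset W^u(\Orb(p_n))$. The main technical hurdle is precisely the uniform boundedness of these forward $g$-iterates, which rests on the hypothesis $\hat{f}^n(I_{\hat{x}}) \subset \hat{K} \times [0,1]$ for all $n \in \mathbb{Z}$ combined with the length-control and gluing idea underlying \eqref{eq.c-invariance-1}.
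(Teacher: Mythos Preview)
Your proposal is correct and follows exactly the approach of the paper: invoke the invariance \eqref{eq.c-invariance-1} to reduce to the return map $g=P_{x,\tau}$ on the central arc, then carry out the elementary one-dimensional analysis (hyperbolic fixed points alternate, the tail past $p_n$ is governed by the sign of $g-\mathrm{id}$, and in the saddle case the bounded monotone forward orbit converges to a hyperbolic attracting fixed point, i.e.\ a sink). The paper's proof is a two-line sketch of precisely this argument; you have simply written out the details.
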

\begin{proof}
  The result follows from \eqref{eq.c-invariance-1} and a simple analysis of one-dimensional dynamics. For the final assertion, note that when $p_n$ is a saddle and $\gamma_{\hat{x}}\setminus\gamma^*_{\hat{x}}\neq\emptyset$, the forward iterates of any point in $\gamma_{\hat{x}}\setminus\gamma^*_{\hat{x}}$ must approximate to a periodic sink since all the iterates of the central curve $\gamma_{\hat{x}}$ are contained in $\pi(\hat{K}\times[0,1])$ and hence have a uniform length.
\end{proof}

\subsection{Proof of Theorem \ref{thm.codim-one}}
\label{sect.pf-thm-codim-one}

Let $\cG$ be a residual subset of $\xX^1(M,\Gamma)$ such that each vector field in $\cG$ is $\Gamma$-avoiding Kupka-Smale and satisfies Theorem \ref{thm.genericity} and Lemma \ref{lem.sinks-accumulation}.
Let $X\in\cG$ and $\Lambda$ be a compact chain-transitive set of $X$ such that $\Lambda\cap(\Gamma\cup\Sing(X))=\emptyset$. Assume there exists a codimension one partially hyperbolic splitting of the normal bundle $\cN_{\Lambda}=\Delta^s\oplus \Delta^c$.

To prove Theorem \ref{thm.codim-one}, let us assume that $\Lambda$ is not hyperbolic.
By Zorn's lemma, we may assume without loss of generality that $\Lambda$ is minimally nonhyperbolic. Then Lemma \ref{lem.sinks-accumulation} shows that $\Lambda$ is the Hausdorff limit of periodic sinks.

We can extend the splitting $\cN_{\Lambda}=\Delta^s\oplus \Delta^c$ to the maximal invariant set $\Lambda_0$ in a neighborhood $U_0$ of $\Lambda$, such that $U_0\cap(\Gamma\cup\Sing(X))=\emptyset$ and $\cN_{\Lambda_0}=\Delta^s\oplus \Delta^c$ remains partially hyperbolic. Let $T>0$ be a domination constant for the splitting $\cN_{\Lambda_0}=\Delta^s\oplus\Delta^c$, and $P_T$ the map defined on the family of normal manifolds $\{N_x(\delta_0)\}_{x\in \Lambda_0}$ (with some $\delta_0>0$) such that $P_T|_{N_x(\delta_0)}=P_{x,T}$.

Let us take a neighborhood $U$ of $\Lambda$ such that $\overline{U}\subset U_0$ and denote by $K$ the maximal invariant set in $\overline{U}$. Assume first that the center bundle $\Delta^c$ over $K$ has an orientation preserved by $P_T$.
By Theorem \ref{thm.central-model}, one can obtain two central models $(\hat{K}_{-},\hat{f}_-;\pi_-)$ and $(\hat{K}_{+},\hat{f}_+;\pi_+)$ for $(K,P_T)$, which will be called the {\em left} central model and the {\em right} central model, respectively.
Correspondingly, at any point $x\in K$, there are a left central curve $\pi_-(\hat{x}^-,[0,+\infty))$ and a right central curve $\pi_+(\hat{x}^+,[0,+\infty))$. 

By considering the subset of the base $\hat{K}_{+}$ that projects by $\pi_+$ to $\Lambda$, one obtains a central model $(\hat{\Lambda}_{+},\hat{f}_+;\pi_+)$ for $(\Lambda,P_T)$. Similarly, one can also obtain a central model $(\hat{\Lambda}_{-},\hat{f}_-;\pi_-)$ for $(\Lambda,P_T)$.
By Theorem \ref{thm.central-model}, since $\Lambda$ is chain-transitive, the central models $(\hat{\Lambda}_{\iota},\hat{f}_{\iota};\pi_{\iota})$ ($\iota\in\{+,-\}$) for $(\Lambda,P_T)$ are both chain-transitive.
Then according to Lemma \ref{lem.central-model-dichotomy}, we have the following subcases regarding the two central models $(\hat{\Lambda}_{\iota},\hat{f}_{\iota};\pi_{\iota})$ ($\iota\in\{+,-\}$):
\begin{enumerate}[(1)]
\item One of the central models has a chain-recurrent central segment;
\item One of the central models has arbitrarily small trapping strips;
\item Both of the central models have arbitrarily small expanding strips.
\end{enumerate}

In the non-orientable case, by Theorem \ref{thm.central-model} we obtain only one central model $(\hat{K}, \hat{f};\pi)$ for $(K, P_T)$. For this central model, any $x\in K$ is contained in the interior of a central curve which is the union of the two half central curves $\pi(\hat{x}^{\iota},[0,+\infty))$, $\iota=+,-$.
Restricted to the subset $\Lambda$, we obtain a central model $(\hat{\Lambda},\hat{f};\pi)$ with chain-transitive base.

Since the central models $(\hat{K}_{-},\hat{f}_-;\pi_-)$ and $(\hat{K}_{+},\hat{f}_+;\pi_+)$ (or in the non-orientable case, $(\hat{K}, \hat{f};\pi)$) can be taken arbitrarily small (see Remark \ref{rmk.central-model-1}), we can assume that the central curves of the models are all contained in $U_0$, and in particular, any chain-recurrent central segment is contained in $\Lambda_0$.
Our proof shall focus on the orientable case, while the non-orientable case can be treated similarly.

\subsubsection{Local strong stable manifolds and orientation}

By Lemma \ref{lem.partial-hyperbolicity}, there is a partially hyperbolic splitting $T_{\Lambda_0}M=E^s\oplus E^{cu}$ for the tangent flow, such that $E^s\oplus \langle X\rangle=\Delta^s\oplus \langle X\rangle$. It is known that there exist local strong stable manifolds $W^{ss}_{loc}(x)$ tangent to $E^s(x)$ for all $x\in \Lambda_0$, with a uniform size. More generally, let us denote
\[\Lambda_0^+=\bigcap_{t\geq 0}\phi_{-t}(\overline{U_0}).\]
Then there exist local strong stable manifolds with uniform size for all points in $\Lambda_0^+$.
For any $x\in \Lambda_0^+$ and $\vep>0$ small, let $W^{ss}_{\vep}(x)$ be the local strong stable manifold at $x$. For $\vep>0$ small, denote
\begin{equation}\label{eq.ss-mfld}
  D_{\vep}(x)=\phi_{(-\vep,\vep)}(W^{ss}_{\vep}(x)).
\end{equation}
Then $D_{\vep}(x)$ is a codimension-one, $C^1$ embedded submanifold, tangent to $E^s\oplus \langle X\rangle$ on the orbit segment  $\phi_{(-\vep,\vep)}(x)$. 

In the orientable case, attached to each $x\in K$ there are the left central curve $\pi_-(\hat{x}^-,[0,1])$ and the right central curve $\pi_+(\hat{x}^+,[0,1])$.
Now, for any small ball at $x$, it is cut into two connected components by $D_{\vep}(x)$. When the radius is small enough, each component intersects either the left central curve or the right central curve. Then any point in the component which intersects the left (resp. right) central curve is said to be at the left (resp. right) side of $x$. We say that a sequence of points $z_n$ converges to $x$ from the right side if $z_n$ is at the right side of $x$ for $n$ large enough.

Note that in the non-orientable case, we do have a local orientation at each $x\in K$: for a small ball at $x$, it is cut by $D_{\vep}(x)$ into two connected components; we just take one to be the left and the other to be the right.
Then the flow transports this orientation to a local orientation at each point on the orbit of $x$.

\subsubsection{The case of chain-recurrent central segments}

\begin{lemma}\label{lem.recurrent-segments}
  In the orientable subcase (1), there exist periodic orbits in the chain-recurrence class containing $\Lambda$. Moreover, there are periodic sinks contained in the closure of the unstable set of these periodic orbits.
\end{lemma}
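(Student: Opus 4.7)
The plan is to leverage the chain-recurrent central segment to produce a chain-transitive set strictly larger than $\Lambda$, apply the generic approximation result of Theorem~\ref{thm.genericity} to get periodic orbits, and then apply Lemma~\ref{lem.periodic-c-curve-general} to the corresponding periodic points in the central model. Without loss of generality I assume the chain-recurrent central segment occurs for the right central model $(\hat\Lambda_+, \hat f_+; \pi_+)$; the left case is symmetric.

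First, pick a chain-recurrent central segment $I=\{\hat x\}\times[0,a]$, with $a$ small by Remark~\ref{rmk.dichotomy-central-model}, and let $\hat{\mathcal M}\subset\hat\Lambda_+\times[0,1]$ be a chain-transitive subset of $\hat f_+$ containing $I$ (and naturally containing $\hat\Lambda_+\times\{0\}\cong\Lambda$). Using the semi-conjugacy $\pi_+\circ\hat f_+=P_T\circ\pi_+$ to transport $\hat f_+$-pseudo-orbits into $P_T$-pseudo-orbits, and then inserting flow segments of length $\approx T$ between successive applications of $P_T$, I obtain a chain-transitive set $\Lambda'$ of $\phi_t$ in $M$ with $\Lambda\cup\gamma\subset\Lambda'$, where $\gamma=\pi_+(I)$. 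Because $\pi_+(\hat x,a)$ lies off $\Lambda$, we have $\Lambda\subsetneq\Lambda'$; by shrinking $a$, I can force $\Lambda'\subset U$ and hence $\Lambda'\cap(\Gamma\cup\Sing(X))=\emptyset$. Applying Theorem~\ref{thm.genericity} to $\Lambda'$, I obtain a sequence of periodic orbits $\eta_n$ of $X$ with $\eta_n\to\Lambda'$ in the Hausdorff topology; for $n$ large $\eta_n\subset U$. Since $\Lambda'$ is chain-transitive and $\eta_n$ is Hausdorff-close to $\Lambda'$, length-one $\varepsilon$-chains connect $\eta_n$ to $\Lambda'$ in both directions, so $\eta_n$ lies in the chain recurrence class $C(\Lambda)$ containing $\Lambda$. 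This yields the first conclusion.

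For the second conclusion, I use the orientable-case homeomorphism $\pi_+\colon\hat K_+\to K$ to identify $\eta_n$ with a periodic orbit $\hat\eta_n\subset\hat K_+$. For $n$ large $\hat\eta_n$ is close to $\hat x$, and since $I$ is contained in the maximal $\hat f_+$-invariant subset of $\hat K_+\times[0,1]$, a continuity/upper-semicontinuity argument produces a nontrivial central segment $I_{\hat\eta_n}=\{\hat\eta_n\}\times[0,a_n]$ satisfying $\hat f_+^k(I_{\hat\eta_n})\subset\hat K_+\times[0,1]$ for all $k\in\mathbb{Z}$. Lemma~\ref{lem.periodic-c-curve-general} then gives a finite alternating arrangement of hyperbolic saddles and sinks along $\gamma_{\hat\eta_n}:=\pi_+(I_{\hat\eta_n})$. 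Selecting a saddle $q$ and an adjacent sink $s$ on this curve, the connecting subarc lies in $W^u(q)\cap W^s(s)$, whence $s\in\overline{W^u(q)}$. All these critical elements are connected by central heteroclinic orbits to $\eta_n$ (and hence to $\Lambda$ via the chain argument of the previous paragraph), so both $q$ and $s$ lie in $C(\Lambda)$.

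The main obstacle I anticipate is Step 3, namely the existence of a nontrivial $\hat f_+$-invariant central segment at $\hat\eta_n$ so that Lemma~\ref{lem.periodic-c-curve-general} actually applies. This is not immediate from the fact that $I$ is invariant at $\hat x$, because the maximal invariant set in $\hat K_+\times[0,1]$ need not vary continuously with the base point. The idea is to exploit Remark~\ref{rmk.dichotomy-central-model} to shrink $a$ freely, combined with upper semi-continuity of the maximal invariant set, so that an invariant fiber of positive length survives at each $\hat\eta_n$ close enough to $\hat x$; the chain-transitivity of $\hat{\mathcal M}$ further guarantees that the resulting central segment is nontrivial rather than collapsing to the base point.
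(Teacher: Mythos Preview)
Your first conclusion is essentially along the same lines as the paper's, but the justification that a \emph{fixed} periodic orbit $\eta_n$ lies in $C(\Lambda)$ is incomplete: Hausdorff closeness to $\Lambda'$ gives $\varepsilon$-chains for one particular $\varepsilon$ (depending on $n$), not for every $\varepsilon>0$. The paper fixes this by choosing the periodic point $p_n$ near the \emph{midpoint} of $\gamma$, so that $p_n\in D_\varepsilon(\gamma)$; then the forward orbit of $p_n$ is genuinely asymptotic (via the strong stable lamination) to an orbit inside $\Lambda'$, which yields forward chains to $\Lambda'$ for all $\varepsilon$.

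The serious gap is your Step~3, and the fix you sketch does not work. Upper semi-continuity of the maximal $\hat f_+$-invariant set in $\hat K_+\times[0,1]$ bounds the fiber over $\hat\eta_n$ from \emph{above}, not below; it is entirely possible that this fiber is $\{(\hat\eta_n,0)\}$ even though the fiber over $\hat x$ contains $[0,a]$. Shrinking $a$ via Remark~\ref{rmk.dichotomy-central-model} only makes the invariant set smaller, and chain-transitivity of $\hat{\mathcal M}$ does not help because $\hat\eta_n$ need not belong to $\hat{\mathcal M}$. So Lemma~\ref{lem.periodic-c-curve-general} cannot be invoked on the right side of $p_n$ in the way you propose.

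The paper's route to the second conclusion is genuinely different and never attempts to manufacture an invariant segment on the right. It switches to the \emph{left} central curve at the saddle $p$ and crucially uses Lemma~\ref{lem.sinks-accumulation}: since $\Lambda$ is minimally nonhyperbolic it is a Hausdorff limit of periodic sinks, and because $\gamma$ is chain-recurrent on the right these sinks cannot accumulate $x$ from the right (their basins would absorb points of $\gamma$), so they accumulate from the left at every $x_t$. Now take $V\subset W^u(p)$ inside the left central curve. Either all iterates $P_{p,nT}(V)$ stay in the left central plaque family, in which case Lemma~\ref{lem.periodic-c-curve-general} applies directly and produces a sink in $\overline{W^u(\mathrm{Orb}(p))}$; or some iterate overshoots, and then the left central curve at $\phi_{n_0T}(p)$ crosses $D_{2\varepsilon}(x_t)$ to the left side, where it meets the strong stable manifolds of the accumulating sinks.

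A minor point: your final claim that the sink $s$ lies in $C(\Lambda)$ is false, since a hyperbolic sink is always its own chain recurrence class. Fortunately the lemma only asks for sinks in the closure of the unstable set, not in $C(\Lambda)$.
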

\begin{proof}
  Without loss of generality, suppose there exists a chain-recurrent central segment $I=\{\hat{x}^{+}\}\times[0,a_0]$ with $\hat{x}^+\in \hat{\Lambda}_+$ and $\pi_+(\hat{x}^+)=x$.
  Let $\gamma=\pi_+(I)$. As $I$ is a chain-recurrent central segment, one can see that the union $\gamma\cup\Lambda$ is contained in a chain-transitive set $\Lambda'$ of $X$. 
  By Theorem \ref{thm.genericity}, $\Lambda'$ is the Hausdorff limit of some periodic orbits. In particular, there exist periodic points $p_n$ arbitrarily close to the middle point $\pi_+(\hat{x}^+,a_0/2)$ of $\gamma$. Thus $p_n\in D_{\vep}(\gamma)$ for some $\vep>0$, which implies immediately that $p_n$ and $\gamma\cup\Lambda$ are contained in the same chain-recurrence class. This proves the first part of the lemma.

  By definition of chain-recurrent central segment, all iterates of $I$ are well-defined. Let us denote $\gamma_t=P_{x,t}(\gamma)$ for any $t\geq 0$, which is a central curve at $x_t:=\phi_t(x)$. 
  For $\vep>0$ small and for any $t\geq 0$, we denote
  $D_{\vep}(\gamma_t)=\bigcup_{z\in\gamma_t}D_{\vep}(z)$. 
  We may assume that $I$ is contained in the maximal invariant set of $\hat{\Lambda}_+\times[0,a]$ with $a\in(0,1)$ arbitrarily small, see Remark \ref{rmk.dichotomy-central-model}. In other words, the length of $\gamma_t$ is uniformly small. Note that the left central curves $\pi_-(\hat{y}_-,[0,1])$ with $\hat{y}_-\in\hat{K}_-$ have a uniform length.
  It follows that for some $\vep>0$, for any $t\geq 0$, if $y\in D_{\vep}(\gamma_t)\cap K$, then the left central curve $\pi_-(\hat{y}_-,[0,1])$ at $y$ intersects transversely with $D_{2\vep}(x_t)$ at an interior point of $\pi_-(\hat{y}_-,[0,1])$.

  Note that $\Lambda$ is minimally nonhyperbolic, Lemma \ref{lem.sinks-accumulation} shows that $\Lambda$ is the Hausdorff limit of some periodic sinks $\{Q_m\}$. Since $\gamma$ is chain-recurrent and is on the right side of $x$, these periodic sinks cannot accumulate $x$ on the right side. Otherwise, there exists periodic sinks whose local stable manifold intersects $\gamma$, leading to a contradiction. Therefore, the periodic sinks $Q_m$ accumulate $x$ on the left side. The same argument shows that there are periodic sinks accumulating $x_t$ on the left side for each $t\geq 0$.

  Let us take a periodic point $p\in K$, in the same chain recurrence class of $\Lambda$ and arbitrarily close to an interior point of $\gamma$. By partial hyperbolicity and since $\gamma$ is chain-recurrent, $p$ is a saddle with one dimensional unstable manifold. Observe that on the left central curve $\pi_-(\hat{p}_,[0,1])$ at $p$, there exists a neighborhood $V$ of $p$ contained in the unstable set of $p$. If $P_{p,nT}(V)\subset \pi_-(\phi_{nT}(\hat{p}_-),[0,1])$ for all $n\geq 0$, then Lemma \ref{lem.periodic-c-curve-general} shows that the closure of the unstable set of $\mathrm{Orb}(p)$ contains periodic sinks. This gives the second part of the conclusion.

  \begin{figure}
  \centering
    \includegraphics[width=.6\textwidth]{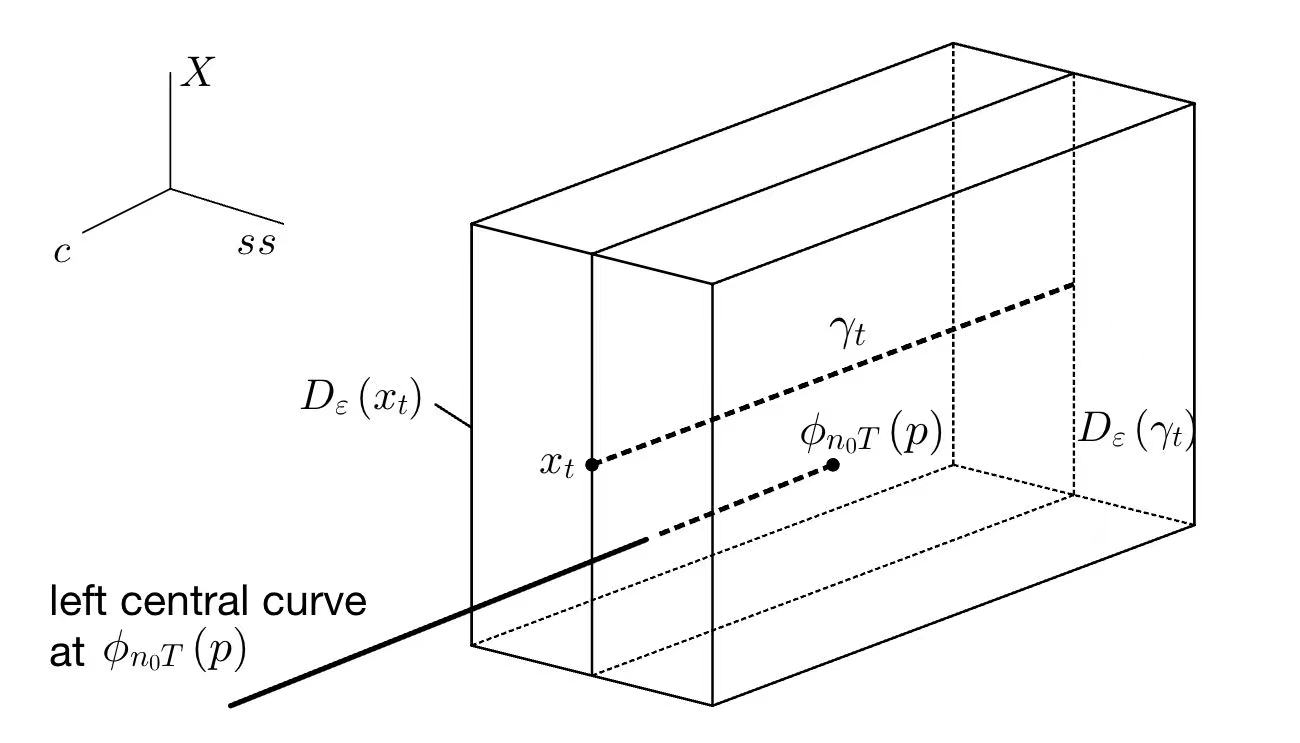}
    \caption{The point $\phi_{n_0T}(p)$ is close to $\gamma_t$ so that the much longer left central curve at $\phi_{n_0T}(p)$ reaches out to the left side of $x_t$.}\label{fig.recurrent-segment}
  \end{figure}

  Suppose now there exists $n>0$ such that $P_{p,nT}(V)\not\subset  \pi_-(\phi_{nT}(\hat{p}_-),[0,1])$. Let $n=n_0$ be the first positive integer with such a property.
  As $p$ is arbitrarily close to $\gamma$, the local stable manifold $W^s_{loc}(\mathrm{Orb}(p))$ intersects $\gamma$. 
  Note that $n_0$ can be assumed to be arbitrarily large by reducing $V$,
  we have $\phi_{n_0T}(p)\in D_{\vep}(\gamma_t)$ for some $t>0$. 
  It follows that the left central curve $\pi_-(\phi_{n_0T}(\hat{p}_-),[0,1])$ at $\phi_{n_0T}(p)$ intersects transversely $D_{2\vep}(x_t)$ and reaches out to the left side of $x_t$. See Figure \ref{fig.recurrent-segment}.
  As there are periodic sinks accumulating $x_t$ on the left side, one finds that $\pi_-(\phi_{n_0T}(\hat{p}_-),[0,1])$ intersects transversely the local strong stable manifolds of the periodic sinks. Observe that $\pi_-(\phi_{n_0T}(\hat{p}_-),[0,1])$ is contained in $P_{p,n_0T}(V)$ and hence contained in the unstable set of $\mathrm{Orb}(p)$. Therefore, there are periodic sinks contained in the closure of the unstable set of $\mathrm{Orb}(p)$.
  This gives the second part of the lemma and completes the proof. 
\end{proof}

\subsubsection{The case of trapping strips}
We show that none of the central models has arbitrarily small trapping strips.

\begin{lemma}\label{lem.trapping-strips}
  In the orientable case, the subcase (2) cannot happen.
\end{lemma}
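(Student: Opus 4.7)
The plan is to argue by contradiction: suppose, without loss of generality, that the right central model $(\hat\Lambda_+,\hat f_+;\pi_+)$ admits arbitrarily small trapping strips; the left case is treated symmetrically. I will combine the topological attraction provided by a trapping strip with the accumulation of periodic sinks from Lemma \ref{lem.sinks-accumulation} to construct a chain-recurrent central segment, contradicting our being in subcase (2).

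First, fix a small trapping strip $S$ with $\hat f_+(\overline S)\subset S$ and set $S^{*}=\bigcap_{n\ge 0}\hat f_+^{\,n}(\overline S)$; then $\pi_+(S^{*})$ is a compact $P_T$-invariant set lying in an arbitrarily small right-side neighborhood of $\Lambda$, and every periodic orbit of $X$ inside this neighborhood lifts to a periodic orbit of $\hat f_+$ in $S^{*}$. Using the left/right orientation at each point of $\Lambda$ provided by the codimension-one disks $D_\vep(\cdot)$ of \eqref{eq.ss-mfld}, each periodic sink $Q_m\to\Lambda$ lies, for $m$ large, on a definite side of $\Lambda$.

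Suppose first that infinitely many $Q_m$ lie on the right side. For such $m$, $\hat Q_m\subset S^{*}$, and Lemma \ref{lem.periodic-c-curve} gives on $\sigma^{*}_{\hat Q_m^{+}}$ an alternating arrangement of hyperbolic sinks and saddles of index $\dim M-2$, all contained in $\pi_+(S^{*})$; the invariant unstable arcs between consecutive sinks and saddles are central curves contained in $\pi_+(S^{*})$. Passing to a Hausdorff limit --- using the invariance relation \eqref{eq.c-invariance-1} along periodic orbits, together with a rescaling that exploits the positive distance of the endpoint sinks of $\sigma^{*}_{\hat Q_m^{+}}$ to $Q_m$ relative to the shrinking height of $S^{*}$ --- yields a non-trivial right central curve contained in the chain-recurrence class of $\Lambda$. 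This is a chain-recurrent central segment, contradicting subcase (2). Hence the $Q_m$ must accumulate $\Lambda$ from the left. We then analyze the left central model: since we are in subcase (2), it has no chain-recurrent central segment, so it admits either a trapping strip (in which case the same argument applied symmetrically rules out left-side sinks as well, contradicting Lemma \ref{lem.sinks-accumulation}) or only expanding strips (in which case the sinks $Q_m$ on the left are reached by forward-expanding left central curves, and applying Lemma \ref{lem.periodic-c-curve-general} at each $Q_m$ together with an analogous Hausdorff-limit argument again produces a chain-recurrent central segment).

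The main obstacle is the Hausdorff-limit step: as $m\to\infty$, both the height of $S^{*}$ and the sizes of the alternating arcs on $\sigma^{*}_{\hat Q_m^{+}}$ shrink to zero, so one must rescale carefully and control the geometry to extract a non-trivial limit central curve rather than a single point of $\Lambda$. The crucial ingredients are the invariance \eqref{eq.c-invariance-1} along periodic orbits, the uniformity coming from genericity (Theorem \ref{thm.f-limit}), and the $\Gamma$-avoiding connecting lemma (Lemma \ref{lem.connecting}), which together ensure that the limit curve is genuinely chain-recurrent and sits in a single chain-recurrence class containing $\Lambda$.
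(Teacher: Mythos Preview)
Your approach has a genuine gap at precisely the step you flag as the main obstacle, and the ingredients you list do not close it. When $Q_m$ is a periodic \emph{sink}, Lemma~\ref{lem.periodic-c-curve} applied at $\hat Q_m^{+}$ may yield $\sigma^{*}_{\hat Q_m^{+}}=\{Q_m\}$: a one-sided neighborhood of $Q_m$ on the right central curve lies in its basin, so the fiber of $\bigcap_{n\ge0}\hat f_+^{\,n}(\overline S)$ over $\hat Q_m^{+}$ can collapse to the base point, and the ``alternating arrangement'' degenerates to the single term $p_0=Q_m$. There are then no saddles and no unstable arcs to take a limit of. Even when $\sigma^{*}_{\hat Q_m^{+}}$ is nontrivial, nothing prevents its diameter from tending to zero with $m$, and a ``rescaling'' has no intrinsic meaning here: you must produce a central curve of definite length inside $M$ lying in the chain-recurrence class, and neither Theorem~\ref{thm.f-limit} nor Lemma~\ref{lem.connecting} furnishes any lower bound on lengths. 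The left-side branch of your case analysis inherits the same defect; note also that by the dichotomy of Lemma~\ref{lem.central-model-dichotomy}, a model with arbitrarily small trapping strips cannot contain a chain-recurrent central segment, so your strategy is aimed at a conclusion the structure of subcase~(2) explicitly forbids.

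The paper's argument is entirely different and takes no limits of curves. One first extends the trapping strip from $\hat\Lambda_+$ to the larger model $\hat K_+$, so that the projected curves $\gamma_x^+$ have \emph{uniform} length for all $x\in K$. The key input is Lemma~\ref{lem.tau-lip}: by taking the strip small, any two periodic points lying on the same periodic central segment have period ratio in $[C^{-1},C]$ for a fixed constant $C>1$. Now choose sinks $q_n\to x_0\in\Lambda$ with $\tau_{n+1}/\tau_n>C$, which is possible since $\Lambda$ is not a periodic orbit. Because the disks $D_\vep(\gamma^+_{q_n})$ have uniform size while the $q_n$ converge, consecutive disks overlap for large $n$; the stable foliation then forces either $\mathrm{Orb}(q_n)$ to meet $\Gamma^+_{q_{n+1}}$ or $\mathrm{Orb}(q_{n+1})$ to meet $\Gamma^+_{q_n}$, giving $\tau_{n+1}/\tau_n\le C$, a contradiction.
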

\begin{proof}
  We prove the lemma by contradiction. Without loss of generality, let us assume $(\hat{\Lambda}_{+},\hat{f}_+;\pi_+)$ has a trapping strip. By continuity and reducing $U$ if necessary, $(\hat{K}_+,\hat{f}_+;\pi_+)$ also has a trapping strip $S$, which projects to the manifold as a continuous family of central curves $\{\gamma_x^+\}_{x\in K}$. Since $S$ is trapping, one has that $\overline{P_{x,T}(\gamma_x^+)}\subset \gamma_{x_T}^+$ for all $x\in K$, where $x_T=\phi_T(x)$. In particular, $\gamma_x^+\subset \Lambda_0^+$ and at each $y\in\gamma_x^+$ there is a $C^1$ embedded disk $D_{\vep}(y)$ as defined by \eqref{eq.ss-mfld}.

  For any periodic point $x\in K$, let $\Gamma^+_x=[x,p_n]$ be the periodic segment on $\gamma^+_x$ given by Lemma \ref{lem.periodic-c-curve}. Let $\eta_x=\gamma^+_x\setminus\Gamma^+_x$. It follows from the trapping property that the endpoint $p_n$ of $\Gamma^+_x$ is a sink and $\eta_x$ is contained in the stable set of $p_n$.
  Note that the trapping strip can be taken arbitrarily small. Lemma \ref{lem.tau-lip} implies that for some constant $C>1$, for any periodic point $x\in K$, if there is another period point $y\in\gamma^+_x$, then it holds
  \begin{equation}\label{eq.trap-strip-1}
    C^{-1}\leq \tau_y/\tau_x\leq C,
  \end{equation}
  where $\tau_x,\tau_y$ are the periods of $x,y$, respectively.

  By Lemma \ref{lem.sinks-accumulation}, we can take a sequence of periodic sinks $q_n$ in $K$ that converges to a point $x_0\in \Lambda$. Let $\tau_n$ be the period of $q_n$ for each $n$.
  Since $\Lambda$ is not a periodic orbit, we can assume that the periods satisfy
  \begin{equation}\label{eq.trap-strip-2}
    \tau_{n+1}/\tau_n>C,\quad\forall n\geq 0.
  \end{equation}
  Let us denote
  \[D_{\vep}(\gamma^+_{q_n}):=\bigcup_{x\in\gamma^+_{q_n}}D_{\vep}(x).\]
  These disks $D_{\vep}(\gamma_{q_n}^+)$ have a uniform size, since the central curves $\gamma_{q_n}^+$ have a uniform length.
  As $q_n$ converges to $x_0$, one has $D_{\vep}(\gamma^+_{q_n})\cap D_{\vep}(\gamma^+_{q_{n+1}})\neq\emptyset$ for $n$ large enough. Since the disks $D_{\vep}(\gamma_{q_n}^+)$ are foliated by local stable manifolds, one can see that either $\mathrm{Orb}(q_n)$ intersects $\Gamma_{q_{n+1}}^+$, or $\mathrm{Orb}(q_{n+1})$ intersects $\Gamma_{q_n}^+$. In both cases, the inequality \eqref{eq.trap-strip-1} shows that $C^{-1}\leq \tau_{n+1}/\tau_n\leq C$, which is a contradiction to \eqref{eq.trap-strip-2}. This completes the proof.
\end{proof}

\subsubsection{Expanding strips on both sides}

\begin{lemma}\label{lem.expanding-both-sides}
  In the orientable subcase (3), $\Lambda$ is contained in a homoclinic class $H$. Moreover, there are periodic sinks contained in the closure of the unstable set of $H$.
\end{lemma}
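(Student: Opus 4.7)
The plan is to emulate the three-stage strategy used in Lemma~\ref{lem.recurrent-segments}: first produce an index-$(\dim M - 2)$ periodic saddle $\gamma$ lying in the chain-recurrence class of $\Lambda$; then invoke Theorem~\ref{thm.relative-h-class} to conclude $\Lambda \subset H(\gamma)$; finally exhibit periodic sinks in $\overline{W^u(H(\gamma))}$ via intersections of iterated central curves with the sinks' basins. For the first stage, I would extend the expanding strips $S_\pm$ from $\hat\Lambda_\pm$ to an $\hat f_\pm$-invariant neighborhood of $\hat\Lambda_\pm$ inside $\hat K_\pm$ by continuity (shrinking $U$ if necessary), observing that such an extension must avoid the periodic sinks provided by Lemma~\ref{lem.sinks-accumulation}, since at those sinks the central direction is contracting and thus incompatible with the base of any expanding strip; because those sinks lie off $\Lambda$, no obstruction arises near $\hat\Lambda_\pm$ itself. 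By Theorem~\ref{thm.genericity}, $\Lambda$ is the Hausdorff limit of hyperbolic periodic orbits $\Orb(q_n)$; for $n$ large, $q_n$ lies in the region where the extended strips act, so the expansion on both sides of the one-dimensional center forces $q_n$ to be a hyperbolic saddle of index $\dim M - 2$ whose one-dimensional unstable manifold is tangent to $\Delta^c$ and coincides near $q_n$ with the two half central curves.

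For the second stage, I would argue as in Lemma~\ref{lem.recurrent-segments} that $\Orb(q_n)$ sits inside a local strong-stable tube $D_\varepsilon$ around a point of $\Lambda$, which combined with Hausdorff closeness places $\Orb(q_n)$ in the same chain-recurrence class as $\Lambda$. Then $\Lambda \cup \Orb(q_n)$ is a compact chain-transitive set disjoint from $\Gamma$ that contains a periodic orbit, so Theorem~\ref{thm.relative-h-class} yields $\Lambda \subset H(q_n,U)$ for every neighborhood $U$ of $\Lambda \cup \Orb(q_n)$, hence $\Lambda \subset H := H(q_n)$. For the third stage, Lemma~\ref{lem.sinks-accumulation} supplies periodic sinks $Q_m$ with $\Orb(Q_m) \to \Lambda$ in the Hausdorff topology. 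The two branches of $W^u(q_n)$ are precisely the half central curves at $q_n$, which are stretched by forward iteration owing to the expanding strips; since they remain close to $\Lambda$ and $Q_m$ is itself Hausdorff-close to $\Lambda$ for $m$ large, the open basin of attraction of $Q_m$ must be entered by some forward iterate of these central curves. This furnishes a point in $W^u(q_n) \cap W^s(Q_m)$, and the forward-invariance of $W^u(q_n)$ then gives $Q_m \in \overline{W^u(q_n)} \subset \overline{W^u(H)}$.

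I expect the main obstacle to be the promotion of Hausdorff closeness to genuine chain-recurrence membership: for a single fixed $q_n$, Hausdorff proximity alone cannot produce $\varepsilon$-chains for all $\varepsilon > 0$ between $\Orb(q_n)$ and $\Lambda$, so the argument must lean on the local strong-stable-tube mechanism used in Lemma~\ref{lem.recurrent-segments} and on the generic identification of $C(q_n)$ with a homoclinic class implicit in Theorem~\ref{thm.relative-h-class}. A secondary technical issue is confirming that the one-dimensional iterated central curves actually enter the basin of each $Q_m$ rather than bypassing it; this will use the continuity of the central bundle, the fact that the basin of $Q_m$ is an open neighborhood of $Q_m$, and the fact that the iterated central curves of $W^u(q_n)$ accumulate on a large subset of $\Lambda$ where $\Orb(Q_m)$ becomes arbitrarily close for $m$ large.
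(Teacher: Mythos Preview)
There is a genuine gap in your first stage. The claim that an expanding strip is ``incompatible with the base containing periodic sinks'' is false, and so is the conclusion that the periodic orbits $\Orb(q_n)$ you obtain from Theorem~\ref{thm.genericity} must be saddles. An expanding strip $S$ means $\hat f^{-1}(\overline S)\subset S$; over a periodic base point $\hat q$ this says only that the one-dimensional fibre interval is mapped by $\hat f^{-1}$ into a proper subinterval containing $0$. That is perfectly compatible with $\hat f$ contracting near $0$ (i.e.\ $q$ a sink) while expanding farther out---precisely the alternating saddle/sink picture of Lemma~\ref{lem.periodic-c-curve}. Indeed Lemma~\ref{lem.sinks-accumulation} already tells you $\Lambda$ \emph{is} the Hausdorff limit of periodic sinks, so your $q_n$ may well be those sinks, and the rest of your argument (chain-transitivity of $\Lambda\cup\Orb(q_n)$, Theorem~\ref{thm.relative-h-class}, sinks in $\overline{W^u(q_n)}$) collapses. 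Your third stage has a separate problem: expanding strips control \emph{backward} iterates of central curves, not forward ones, so there is no reason the forward iterates of the central curves at $q_n$ should stay near $\Lambda$ long enough to hit the basin of a prescribed $Q_m$.

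The paper's route is rather different and does not use Theorem~\ref{thm.relative-h-class} here. It starts from the periodic \emph{sinks} $p_n$ given by Lemma~\ref{lem.sinks-accumulation}, applies the expanding-strip version of Lemma~\ref{lem.periodic-c-curve} to the two central curves at each $p_n$ to obtain periodic segments $\Gamma_n=\Gamma_n^+\cup\Gamma_n^-$ whose endpoints $p_n^{\pm}$ are \emph{saddles}, and then---choosing the $p_n$ with periods growing faster than a fixed constant---shows via a period-incompatibility argument that the stable tubes $D_\vep(\Gamma_n)$ are pairwise disjoint, forcing $\mathrm{len}(\Gamma_n)\to 0$. The complementary arcs $J_n^{\iota}=\gamma_{p_n}^{\iota}\setminus\Gamma_n^{\iota}$ lie in $W^u(p_n^{\iota})$ and keep uniform length, and a direct transversality argument then shows $\Orb(p_{n_0}^+)$ and $\Orb(p_n^-)$ are homoclinically related for all large $n$; since $p_n^{\pm}\to x_0\in\Lambda$, one gets $\Lambda\subset H(p_{n_0}^+)$. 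The required sink in $\overline{W^u(H)}$ is read off directly from $\Gamma_{n_0}^+$, which by construction contains the sink $p_{n_0}$.
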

\begin{proof}
In this case, both central models $(\hat{\Lambda}_{\iota},\hat{f}_{\iota};\pi_{\iota})$ ($\iota=+,-$) have arbitrarily small expanding strips. By continuity and reducing $U$ if necessary, this holds also for the central models $(\hat{K}_{\iota},\hat{f}_{\iota};\pi_{\iota})$. We take for each of the central models a small expanding strip and project them to $M$, obtaining two continuous families of central curves $\{\gamma^+_x\}_{x\in K}$ and $\{\gamma^-_x\}_{x\in K}$.
As in the proof of Lemma \ref{lem.trapping-strips}, we can assume that the central models are small such that for some constant $C>1$, for any periodic point $x\in K$, if $y$ is another periodic point in $\gamma_x^+$ or $\gamma_x^-$, then
\begin{equation}\label{eq.exp-1}
  C^{-1}\leq \tau_y/\tau_x\leq C,
\end{equation}
where $\tau_x, \tau_y$ are the periods of $x,y$, respectively.

By Lemma \ref{lem.sinks-accumulation}, there is a sequence of periodic sinks $O_n\subset K$ converging to $\Lambda$ in the Hausdorff topology. Denote by $\tau_n$ the period of $O_n$. Since $\Lambda$ is not a periodic orbit, we can assume that
\begin{equation}\label{eq.exp-2}
  \tau_{n+1}/\tau_n>C^2,\quad\forall n\geq 0.
\end{equation}
Let us take $p_n\in O_n$ such that $p_n$ converges to a point $x_0\in \Lambda$. Let $\Gamma_n^{\iota}$ be the periodic segment on $\gamma_{p_n}^{\iota}$ given by Lemma \ref{lem.periodic-c-curve}. Precisely,
\[\Gamma_n^{\iota}=\bigcap_{i=0}^{\infty} P_{\phi_{iT}(p_n),-iT}\left(\gamma^{\iota}_{\phi_{iT}(p_n)}\right),\quad \text{for}\ \iota\in\{+,-\}.\]
We define $\Gamma_n=\Gamma^+_n\cup\Gamma^-_n$. Note that $\Gamma_n\subset \Lambda_0^+$. For each $n$, let
  $D_{\vep}(\Gamma_n):=\bigcup_{z\in\Gamma_n}D_{\vep}(z)$, where $D_{\vep}(z)$ is defined as in \eqref{eq.ss-mfld} with $\vep>0$ small.
\begin{claim}
  One has $D_{\vep}(\Gamma_m)\cap D_{\vep}(\Gamma_n)=\emptyset$ for any distinct $m,n\in\mathbb{N}$. Consequently, the length of $\Gamma_n$ goes to 0 as $n\to\infty$.
\end{claim}
\begin{proof}[Proof of claim]
  Suppose the claim is not true, i.e. $D_{\vep}(\Gamma_n)\cap D_{\vep}(\Gamma_m)\neq\emptyset$ for some $m\neq n$. This implies that there is a periodic orbit intersecting both $\Gamma_m$ and $\Gamma_n$. It follows from the inequality \eqref{eq.exp-1} that $C^{-2}\leq \tau_{m}/\tau_{n}\leq C^2$, which is a contradiction to \eqref{eq.exp-2}. This proves the first part of the claim.
  Consequently, one must have $\mathrm{len}(\Gamma_n)\to0$ ($n\to\infty$), since $p_n$ converges to $x_0$. 
\end{proof}

By Lemma \ref{lem.periodic-c-curve}, the two endpoints of the interval $\Gamma_n$ are both saddles. The right endpoint (in $\Gamma^+_n$) will be denoted as $p_n^+$ and  the left endpoint (in $\Gamma^-_n$) as $p_n^-$.
For $\iota\in\{+,-\}$, we denote $J^{\iota}_n=\gamma^{\iota}_{p_n}\setminus\Gamma^{\iota}_n$, which is contained in the unstable set of the saddle $p_n^{\iota}$. By the claim, we can assume that $J^{\iota}_n$ has a uniform length. Let us fix $\vep>0$ small.
Since $N^c$ and $N^{s}$ are transverse, there exists $0<\delta<\vep/2$ such that for any $y\in  K$ and $n$, if $d(p_n^+,y)<\delta$ and if $y$ is on the right of $p^+_n$, then $D_{\vep}(y)\pitchfork J^+_n\neq\emptyset$. A similar result holds on the left side.

\begin{figure}[htbp]
  \centering
  \includegraphics[width=.5\textwidth]{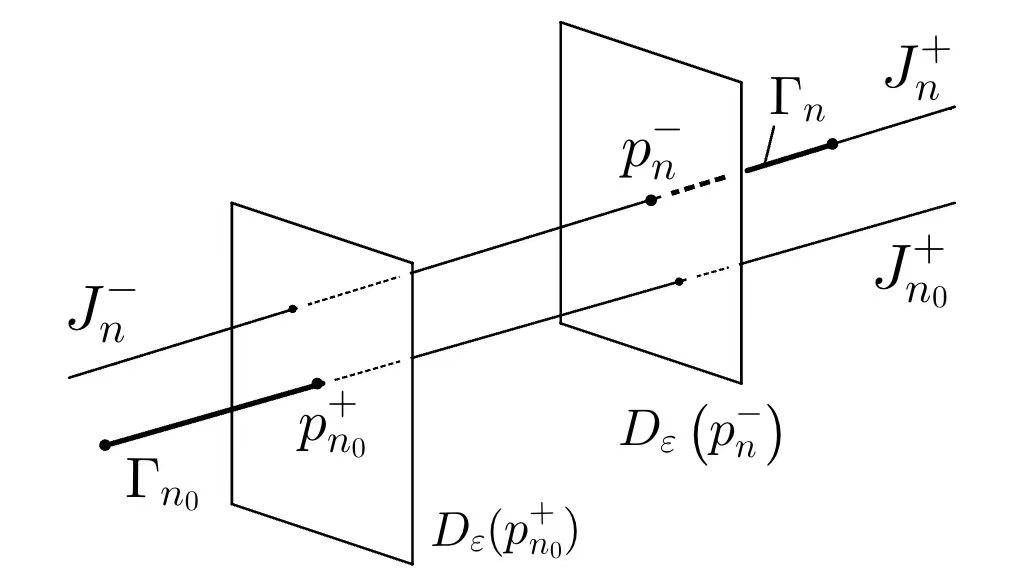}
  \caption{The two periodic orbits $\mathrm{Orb}(p^+_{n_0})$ and $\mathrm{Orb}(p^-_n)$ are homoclinically related}
  \label{fig.expanding-strip}
\end{figure}

Now, we can take $n_0$ large enough, such that for any $n>n_0$, the distance between $p_n$ and $p_{n_0}$ is smaller than $\delta/4$. Moreover, the length of $\Gamma_n$ is also smaller than $\delta/4$.
By the claim, $D_{\vep}(\Gamma_m)\cap D_{\vep}(\Gamma_n)=\emptyset$ for any $m\neq n$.
It follows that the local flow around $\Gamma_{n_0}$ gives an ordering of the intervals $\Gamma_n$.
Without loss of generality, we assume that $\Gamma_n$ is on the right of $\Gamma_{n_0}$. Then our choice of the constants implies that $D_{\vep}(p^+_{n_0})\cap J_n^-\neq\emptyset$ and $D_{\vep}(p^-_{n})\cap J_{n_0}^+\neq\emptyset$, i.e. the two periodic orbits $\mathrm{Orb}(p^+_{n_0})$ and $\mathrm{Orb}(p^-_n)$ are homoclinically related. See Figure \ref{fig.expanding-strip}.
As the length of $\Gamma_n$ goes to zero and $p_n\to x_0\in\Lambda$, we see that $\Lambda$ is contained in $H(p^+_{n_0})$.

Since $\Gamma_n$ contains the periodic sink $p_n$, one has $\mathrm{len}(\Gamma_{n}^{\iota})>0$ for $\iota\in\{+,-\}$ and for all $n$. It follows that there exists a sink on $\Gamma_{n_0}^+$, in the closure of the unstable set of $p^+_{n_0}$, see Lemma \ref{lem.periodic-c-curve}. 
This completes the proof of the lemma.
\end{proof}

\subsubsection{Completing the proof of Theorem \ref{thm.codim-one}}

Considering the central model(s) for $\Lambda$.
In the orientable case, the conclusion of Theorem \ref{thm.codim-one} follows from Lemma \ref{lem.recurrent-segments} $\sim$ \ref{lem.expanding-both-sides}. In the non-orientable case, Lemma \ref{lem.central-model-dichotomy} implies that either the central model $(\hat{\Lambda},\hat{f};\pi)$ has a chain-recurrent central segment, similar to subcase (1); or it has arbitrarily small trapping  strips for $\hat{f}$, similar to subcase (2); or it has arbitrarily small expanding strips for $\hat{f}$, similar to subcase (3). Hence the conclusion of Theorem \ref{thm.codim-one} follows from similar arguments as in the orientable subcases.

\section{Lyapunov stable chain recurrence classes}\label{sect.pf-of-h-class}

This section is devoted to the proof of Theorem \ref{thm.h-class}\,: let $X\in\xX^1(M)$ be a $C^1$ generic vector field and $C(\sigma)$ a non-trivial chain recurrence class of a singularity $\sigma$, if  all singularities in $C(\sigma)$ are Lorenz-like and there exists a partially hyperbolic splitting $T_{C(\sigma)}M=E^s\oplus F$ with $E^s$ contracting and $\dim F=2$, then $C(\sigma)$ is a homoclinic class.

To prove Theorem \ref{thm.h-class}, i.e. to show that $C(\sigma)$ is a homoclinic class, we follow the strategy of \cite[Theorem 4.4]{GY}. In fact, we only provide a sketch of the proof, as one can refer to \cite[Section 6]{GY} for the details.

Observe first that the assumptions of Theorem \ref{thm.h-class} imply that the unstable manifold of each singularity is one-dimensional and $C(\sigma)$ is Lyapunov stable (see Remark \ref{rmk.h-class}).

Proving by contradiction, suppose that $C(\sigma)$ is not a homoclinic class, or equivalently in the generic case, $C(\sigma)$ is aperiodic.
The following result shows that $C(\sigma)$ cannot be singular hyperbolic.
\begin{theorem}[\cite{PYY}]\label{thm.yang}
Let $C$ be a non-trivial Lyapunov stable chain recurrence class of a $C^1$ vector field. Suppose $C$ is singular hyperbolic and every singularity in it is hyperbolic, then $C$ contains a periodic orbit. Moreover, if the vector field is $C^1$ generic, then $C$ is an attractor.
\end{theorem}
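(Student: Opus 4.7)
The plan is to use Lyapunov stability together with the area-expansion built into singular hyperbolicity to detect a periodic orbit arising from the unstable manifold of a singularity in $C$, and then, in the $C^1$-generic case, to upgrade the class to an attractor via a trapping-neighborhood argument.

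First, I would reduce to the attractor-type splitting. Singular hyperbolicity provides a $\phi_t$-invariant dominated splitting $T_CM = E^{cs}\oplus E^{cu}$ with one extremal factor uniformly hyperbolic and the other sectionally volume expanding or contracting. Lyapunov stability of a non-trivial class rules out the repeller-type splitting (where the uniformly hyperbolic factor is expanding), so one may assume $T_CM = E^s\oplus E^{cu}$ with $E^s$ contracting and $E^{cu}$ sectionally area-expanding. Extend the splitting and the associated strong stable foliation continuously to a small open neighborhood $U$ of $C$. Lyapunov stability lets us choose $U$ forward-invariant with $C=\bigcap_{t\geq 0}\phi_t(U)$.

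Second, I would produce a periodic orbit. Pick a singularity $\sigma\in C$; hyperbolicity combined with the partially hyperbolic structure forces $E^s(\sigma)$ to lie inside the stable subspace of $\sigma$, so $\sigma$ has nontrivial unstable manifold. Because $\sigma\in C$ and $C$ is Lyapunov stable, a standard argument (in the spirit of \cite[Appendix A]{APa}) shows that at least one unstable branch $\Delta\subset W^u(\sigma)\setminus\{\sigma\}$ is entirely contained in $C$. Pick a regular point $x_0\in\Delta$ and take a small cross-section $\Sigma$ transverse to $X$ at $x_0$. The strong stable foliation restricted to $\Sigma$ gives a quotient of dimension $\dim E^{cu}$, on which the first-return map $R$ of $\Sigma$ induces a well-defined map $\bar R$ that is area-expanding thanks to the $E^{cu}$-expansion property. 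Forward-invariance of $U$ and non-triviality of $C$ imply that $x_0$ has recurrent returns arbitrarily close to itself, so $\bar R$ has a non-empty invariant region. A Brouwer/Poincar\'e--Bendixson fixed-point argument on this quotient (alternatively, a closing-lemma argument for singular hyperbolic sets using the extended linear Poincar\'e flow of Liao--Li--Gan--Wen) yields a periodic point of $\bar R$, and hence a periodic orbit of the flow inside $U$. Since $C=\bigcap_{t\geq 0}\phi_t(U)$, this periodic orbit lies in $C$. For the generic attractor conclusion, once $C$ contains a hyperbolic periodic orbit $\gamma$, the classical $C^1$-generic identification of chain recurrence classes with homoclinic classes (Hayashi's connecting lemma and the genericity techniques reviewed earlier in the excerpt) yields $C=H(\gamma)$. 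The uniform-size strong stable foliation through points of $C$ then fills an open neighborhood of $C$, and combined with forward-invariance this produces a genuine trapping neighborhood; hence $C$ is an attractor.

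The hard part is producing the periodic orbit, specifically controlling orbits that pass near singularities. Close to a singularity the flow slows to zero and the usual linear Poincar\'e flow degenerates, so one must work with the scaled/extended linear Poincar\'e flow to transfer the $E^{cu}$ area-expansion estimates across near-singular passages. The most delicate issue is ruling out the scenario in which the unstable branch $\Delta$ is captured by another singularity before ever returning to the cross-section; this is where the Lorenz-like constraint on the singularities (implied by singular hyperbolicity) and Lyapunov stability of $C$ must be invoked simultaneously, and it is where the bulk of the technical work of \cite{PYY} concentrates.
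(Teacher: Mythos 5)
The paper does not prove this statement: it is imported verbatim from \cite{PYY} (note the citation built into the theorem header), and no argument is supplied in the present text. So there is no ``paper's own proof'' to compare against; you are really being asked to reconstruct the proof of an external result. Judged on its own terms, your sketch has two substantive gaps.

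First, the step that reduces to the attractor-type splitting is asserted, not argued. Singular hyperbolicity of $C$ means $C$ is sectionally hyperbolic either for $X$ or for $-X$; you claim Lyapunov stability ``rules out the repeller-type splitting,'' but you give no reason. The correct mechanism is the presence of a singularity $\sigma\in C$ with $W^u(\sigma)\subset C$ forced by Lyapunov stability; if the splitting were $E^{cs}\oplus E^u$ with $E^u$ uniformly expanding, then $\dim W^u(\sigma)\ge\dim E^u\ge 1$ and one still has to run an argument to get a contradiction with Lyapunov stability (the whole unstable manifold lies in $C$, yet $E^u$-expansion would make $C$ repel in those directions). This is doable but it is not immediate, and without it the reduction to $E^s\oplus E^{cu}$ is unjustified.

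Second and more seriously, the heart of your existence argument --- ``a Brouwer/Poincar\'e--Bendixson fixed-point argument on this quotient yields a periodic point of $\bar R$'' --- does not go through in the generality of the theorem. Poincar\'e--Bendixson requires $\dim E^{cu}=2$ (i.e.\ codimension-two partial hyperbolicity), which is \emph{not} assumed in the statement. Brouwer would require a continuous self-map of a topological ball, but the induced return map on the quotient by the strong stable foliation is discontinuous near the local stable manifolds of the singularities, and sectional area-expansion of $E^{cu}$ gives no invariant ball. Near singularities the return time is unbounded and the quotient is not even a manifold-with-boundary, so neither fixed-point theorem applies. The actual proof in \cite{PYY} takes a genuinely different route through entropy theory (this is what the title of that paper refers to): one shows that a non-trivial sectional-hyperbolic chain recurrence class supports an ergodic measure of positive entropy, hence a hyperbolic measure, and extracts periodic orbits via a (flow version of the) Katok closing argument. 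Your quotient/return-map approach reproduces the rough geometry but cannot close the argument without either the two-dimensional restriction or the entropy machinery.

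The ``attractor'' conclusion in the generic case is fine in outline --- once a hyperbolic periodic orbit lies in $C$ one uses the $C^1$-generic identification $C=H(\gamma)$, Lyapunov stability, and the uniform strong stable foliation to cook up a trapping region --- though it also leans on the existence of a continuous extension of the foliation to a neighborhood, which should be stated rather than used silently.
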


As $C(\sigma)$ is not singular hyperbolic, Zorn's lemma suggests a definition as the following:
\begin{definition}
  A compact invariant set is called an {\em $\cN$-set}, if the set itself is not singular hyperbolic, but any compact invariant proper subset is singular hyperbolic.
\end{definition}

The rest of the proof can be divided into 3 parts:
\begin{enumerate}
  \item (Subsection \ref{sect.cross-section-system}) Cross-section system: construction and properties;
  \item (Subsection \ref{sect.N-sets}) Study of $\cN$-sets: existence and properties;
  \item (Subsection \ref{sect.adapted-returns}) Infinitesimal $cu$ adapted returns via homoclinic orbits of singularities: the contradiction.
\end{enumerate}

\subsection{Construction of a cross-section system}\label{sect.cross-section-system}

Let $C(\si)\cap\Sing(X)=\{\si_1,\cdots,\si_k\}$. Since the unstable manifold of each $\si_i$ is one-dimensional, we can fix an orientation around $\si_i$, and let $W^u(\si_i)\setminus\{\si_i\}$ be composed of the left separatrix $W^{u,l}(\si_i)$ and the right separatrix $W^{u,r}(\si_i)$. Moreover, since each $\si_i$ is Lorenz-like, one has $E^s(\si_i)=E^{ss}(\si_i)\oplus E^{c}(\si_i)$, $\dim E^{c}(\si_i)=1$. It follows that the orientation we have fixed also allows us to give a positive direction of $E^c(\si_i)$ for each $\si_i$, which is equivalent to fixing a positive side of $E^{ss}(\si_i)\oplus E^u(\si_i)$.

Let $S\subset M$ be a $C^1$ hypersurface which is homeomorphic to $(-1,1)^{d-1}$, where $d=\dim M$. We say that $S$ is a {\em cross-section} of $X$ if $\measuredangle(T_xS,\langle X(x)\rangle)>\pi/4$ for each $x\in S$.

For each $\sigma_i$, one can take two cross-sections $S^+_i$, $S^-_i$ in a small neighborhood of $\si_i$ such that they are on the opposite sides of $E^{ss}(\si_i)\oplus E^u(\si_i)$. One requires that the following conditions are satisfied for $\pm\in\{+,-\}$:
\begin{enumerate}
  \item There exists a homeomorphism $h^{\pm}_i:[-1,1]\times[-1,1]^{d-2}$ $\to \Cl(S^{\pm}_i)$, such that $h^{\pm}_i((-1,1)\times(-1,1)^{d-2})=S^{\pm}_i$.
  \item There is a uniform $\alpha>0$ such that for each $x\in[-1,1]$, the image of $h(x,\cdot)$ is $W^{ss}_{loc}(\phi_{[-\alpha,\alpha]}(h(x,0)))\cap \overline{S}$.
  \item Moreover, $S^{\pm}_i\cap W^s_{loc}(\si_i)=h^{\pm}_i(\{0\}\times (-1,1)^{d-2})$.
\end{enumerate}
Denote by $l^{\pm}_i=S^{\pm}_i\cap W^s_{loc}(\sigma_i)$. Then  $S^{\pm}_i\setminus l^{\pm}_i=S^{\pm,l}_i\cup S^{\pm,r}_i$, i.e.  $l^{\pm}_i$ cuts $S^{\pm}_i$ into two parts: the left part $S^{\pm,l}_i$ and the right part $S^{\pm,r}_i$.

One defines the union of cross-sections associated to the singularities $\{\sigma_i\}$:
\[\Sigma=\bigcup_{1\leq i\leq k}(S^+_i\cup S^-_i).\]
For each $p\in\Sigma$, if $\phi_t(p)\in\Sigma$ for some $t>0$, one can define the first return time $t_p=\min\{t>0:\phi_t(p)\in\Sigma\}$ and define $R(p)=\phi_{t_p}(p)$. The map $R$ will be called the {\em first return map} associated to $\Sigma$. Let $\mathrm{Dom}(R)$ be the domain of $R$, one has
\[\mathrm{Dom}(R)=\{p\in\Sigma:\exists t>0 \ \text{s.t.}\ \phi_t(p)\in\Sigma\}.\]

\begin{definition}[Cross-section system]
\label{def.cross-section-system}
With the notations above, we call $(\Sigma,R)$ a cross-section system of $(C(\sigma),X)$ if the following conditions are satisfied:
\begin{enumerate}
\item $\partial\Si\cap C(\sigma)=\emptyset$, where $\partial\Si$ is the boundary of $\Sigma$.
\item For each $p\in\Sigma$, suppose $p\in S^{\pm}_i$, then there exists $\vep>0$ such that $W^{ss}_{loc}(\phi_{(-\vep,\vep)}(p))\cap\Sigma\subset h^{\pm}_i(\{x_p\}\times(-1,1)^{d-2})$, where $h^{\pm}_i(x_p,y_p)=p$.
\item For each $\si_i$, and $p\in W^s_{loc}(\si_i)\cap\La\setminus\{\si_i\}$, there exists $t\in\mathbb{R}$ such that $\phi_t(p)\in l^{\pm}_i$, where $l^{\pm}_i=S^{\pm}_i\cap W^s_{loc}$. Moreover, if $t>0$, then $\phi_{[0,t]}(p)\subset W^s_{loc}(\si_i)$, and if $t<0$, then $\phi_{[t,0]}(p)\subset W^s_{loc}(\si_i)$. 
\item There is $\alpha_0\in(0,1)$ such that for any $p\in \mathrm{Dom}(R)$, suppose $p=h^{\pm}_i((x_p,y_p))$ and $q=R(p)=h^{\pm}_j((x_q,y_q))$, then $h^{\pm}_i((x_p,t))\in \mathrm{Dom}(R)$ for any $t\in[-1,1]^{d-2}$ and $R(h^{\pm}_i(\{x_p\}\times[-1,1]^{d-2})\subset h^{\pm}_j(\{x_q\}\times(-\alpha_0,\alpha_0)^{d-2})$.
\item For any $x\in\La\setminus\bigcup_{1\leq i\leq k}W^s_{loc}(\si_i)$, the positive orbit of $x$ will intersect $\Si$. Especially,
\[C(\sigma)\cap\Si\setminus\bigcup_{1\leq i\leq k}W^s_{loc}(\si_i)\subset\mathrm{Dom}(R).\]
\end{enumerate}
\end{definition}

The existence of a cross-section system for the vector field $X$ is given by Proposition 6.4 in \cite{GY}. Although the construction of a cross-section system in \cite{GY} is for $\dim E^{ss}=1$, it can be generalized to the case $\dim E^{ss}>1$ almost immediately, giving the following result.
\begin{proposition}
  Assuming that $C(\sigma)$ is aperiodic, then $C(\sigma)$ admits a cross-section system $(\Sigma_X,X)$.
\end{proposition}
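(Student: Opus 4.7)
The plan is to follow the local construction of \cite[Proposition 6.4]{GY} verbatim, noting that the only modification needed when $\dim E^{ss} > 1$ is that the ``horizontal'' factor $[-1,1]^{d-2}$ of each cross-section now parametrizes multidimensional strong stable plates; all local arguments depend only on continuity of the strong stable foliation, which is automatic in any codimension. Throughout I will use that under the hypotheses of Theorem \ref{thm.h-class}, each Lorenz-like $\sigma_i$ has $\dim E^{ss}=d-2$, $\dim E^c=1$, $\dim E^u=1$.

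First, for each Lorenz-like singularity $\sigma_i\in C(\sigma)$, I would pass to a small linearizing chart at $\sigma_i$ in which the splitting $E^{ss}(\sigma_i)\oplus E^c(\sigma_i)\oplus E^u(\sigma_i)$ is genuine. Inside the chart I would choose two codimension-one disks $S_i^+$, $S_i^-$ transverse to the flow and placed on opposite sides of the hyperplane $E^{ss}(\sigma_i)\oplus E^u(\sigma_i)$, with sides distinguished by the fixed positive orientation of $E^c(\sigma_i)$. Using continuity of the local strong stable foliation I would parametrize each $S_i^{\pm}$ by a homeomorphism $h_i^{\pm}\colon[-1,1]\times[-1,1]^{d-2}\to\mathrm{Cl}(S_i^{\pm})$ so that the fibers $h_i^{\pm}(\{x\}\times[-1,1]^{d-2})$ are exactly the local strong stable plates through the points $h_i^{\pm}(x,0)$. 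The set $l_i^{\pm}=h_i^{\pm}(\{0\}\times[-1,1]^{d-2})=S_i^{\pm}\cap W^s_{loc}(\sigma_i)$ then cuts $S_i^{\pm}$ into a left and a right half, and I set $\Sigma=\bigcup_{i=1}^k(S_i^+\cup S_i^-)$.

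Conditions (1)--(4) of Definition \ref{def.cross-section-system} would then follow by purely local arguments. For (1), a slight generic shrinking of each $S_i^{\pm}$ achieves $\partial\Sigma\cap C(\sigma)=\emptyset$, since a one-parameter family of shrinkings has uncountably many members and $C(\sigma)$ is compact. Condition (2) is tautological once we parametrize by strong stable plates. For (3), the second clause of the Lorenz-like definition, namely $W^{ss}(\sigma_i)\cap C(\sigma)=\{\sigma_i\}$, rules out points in $W^s_{loc}(\sigma_i)\cap\Lambda\setminus\{\sigma_i\}$ lying on $W^{ss}_{loc}(\sigma_i)$, so the $E^c$-coordinate in the linearized chart is nonzero; tracking its monotone decay produces a unique time at which the orbit crosses $l_i^{\pm}$. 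For (4), continuous dependence of the flow on initial conditions combined with uniform strong stable contraction provides the holonomy condition and the uniform bound $\alpha_0<1$, once the cross-sections are chosen small enough.

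The hard part will be condition (5), which is the only global condition and is where the aperiodicity assumption enters. I plan to argue by contradiction: suppose some $x\in\Lambda\setminus\bigcup_iW^s_{loc}(\sigma_i)$ has positive orbit disjoint from $\Sigma$. Then $\omega(x)\subseteq C(\sigma)$ is a nonempty compact chain-transitive invariant set disjoint from every $S_i^{\pm}$, and in particular $\omega(x)\cap\Sing(X)=\emptyset$, since by the linearized picture any orbit outside the local stable manifolds that approaches a singularity of $C(\sigma)$ must cross the adjacent cross-section. Applying Theorem \ref{thm.genericity} to the generic $X$ (with $\Gamma=\emptyset$), the set $\omega(x)$ is a Hausdorff limit of hyperbolic periodic orbits, which for large $n$ lie in arbitrarily small neighborhoods of $C(\sigma)$; generic lower semicontinuity of chain recurrence classes then places these periodic orbits inside $C(\sigma)$, contradicting the aperiodicity of $C(\sigma)$.
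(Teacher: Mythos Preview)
Your overall plan matches the paper exactly: the paper gives no argument beyond citing \cite[Proposition 6.4]{GY} and remarking that replacing the one-dimensional strong stable factor by $[-1,1]^{d-2}$ is immediate. Your treatment of the local construction and of conditions (1)--(4) is a faithful expansion of that citation.

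There is, however, a gap in your argument for condition (5). You produce periodic orbits $\gamma_n \to \omega(x)$ in the Hausdorff metric via Theorem~\ref{thm.genericity}, and then assert that ``generic lower semicontinuity of chain recurrence classes'' places these $\gamma_n$ inside $C(\sigma)$. That inference does not hold: lower semicontinuity of $Y \mapsto C(\sigma_Y, Y)$ is a statement about varying the vector field, not about locating periodic orbits of a fixed $X$. In fact, Lyapunov stability of $C(\sigma)$ forces its chain-unstable set to equal $C(\sigma)$ itself, so for each fixed $n$ there is some $\epsilon_n > 0$ below which no $\epsilon$-chain from $C(\sigma)$ reaches $\gamma_n$; nothing prevents the $\gamma_n$ from lying in other chain classes accumulating on $C(\sigma)$, which is entirely consistent with $C(\sigma)$ being aperiodic.

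One way to repair the argument is to split according to Theorem~\ref{thm.codim-one} (with $\Gamma=\emptyset$). If $\omega(x)$ is not hyperbolic, the chain-transitive set $\Lambda'$ produced there contains $\omega(x)\subset C(\sigma)$ and hence, by maximality of the chain recurrence class, lies in $C(\sigma)$; its periodic saddle then contradicts aperiodicity. If $\omega(x)$ is hyperbolic, the periodic orbit $\gamma$ obtained by Anosov closing of a recurrent orbit in $\omega(x)$ has $W^u(\gamma)$ transversally meeting $W^s(\omega(x))$ and $W^s(\gamma)$ meeting $W^u(\omega(x))$; the resulting genuine heteroclinic orbits give $\epsilon$-chains in both directions for every $\epsilon>0$, forcing $\gamma\subset C(\sigma)$ and again contradicting aperiodicity.
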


One defines cross-section systems similarly for any $C^1$ vector field close to $X$.
Since $X$ is $C^1$ generic, there is a $C^1$ neighborhood $\cU_X$ of $X$ such that the following properties hold for each $Y\in\cU_X$:
\begin{enumerate}[(P1)]
  \item Every singularity $\rho\in C(\sigma_Y,Y)$ is Lorenz-like and $W^{ss}(\rho,Y)\cap C(\sigma_Y,Y)=\{\rho\}$. (\cite[Lemma 3.19]{GY})
  \item If $Y$ is weak Kupka-Smale, i.e. all critical elements of $Y$ are hyperbolic, then $C(\sigma_Y,Y)$ is Lyapunov stable. (\cite[Lemma 3.15]{GY}) \label{p.ls} 
  \item A singularity $\rho$ is contained in $C(\sigma_Y,Y)$ if and only if it is the continuation of a singularity in $C(\sigma)$. In particular, the number of singularities in $C(\sigma_Y,Y)$ is a constant $k$ for all $Y\in \cU_X$. (\cite[Lemma 3.12]{GY})
\end{enumerate}

As shown in \cite[Proposition 6.5]{GY}, one can assume that the neighborhood $\cU_X$ is small such that for any $Y\in\cU_X$, $C(\si_Y,Y)$ admits also a cross-section system $(\Si_Y,R_Y)$ such that $\Si_Y$ is close to $\Si_X$ in the sense that $\Si_Y$ is obtained by modifying the boundary of $\Si_X$ slightly.

We choose a smaller $C^1$ neighborhood $\cV_X$ of $X$ such that $\overline{\cV_X}\subset \cU_X$.
For each $Y\in \cV_X$, let us define $n(Y)$ to be the number of homoclinc orbits of singularities in $C(\sigma_Y,Y)$. Here, a homoclinic orbit of a singularity is a non-trivial orbit contained in the intersection of its stable manifold and unstable manifold. Remember that the unstable manifold of each singularity is one-dimensional. Thus, $n(Y)\leq 2k$. Let
\[n_0=\max\{n(Y): Y\in\cV_X, \ Y\ \text{is weak Kupka-Smale}\}.\]
Denote by $\xM_{n_0}$ the set of $Y\in\cV_X$ such that $n(Y)=n_0$.

Let us take $Y_0\in\xM_{n_0}$.
Denote by $\Gamma$ the closure of the union of all the homoclinic orbits of singularities in $C(\sigma_{Y_0},Y_0)$.
We then consider the set of vector fields
\[\xX^1(M,\Gamma)=\{Y\in\xX^1(M): Y|_{\Gamma}=Y_0|_{\Gamma}\}.\]

\subsection{Existence and properties of $\cN$-sets}
\label{sect.N-sets}

\begin{proposition}\label{prop.N-set}
  Suppose $C(\sigma)$ is not singular hyperbolic, then there exists a $C^1$ neighborhood $\cU$ of $X$ such that for any vector field $Y\in \cU$, the continuation of $C(\sigma)$, denoted as $C(\sigma_Y,Y)$ is not singular hyperbolic and contains a transitive $\cN$-set $\Lambda_Y$.
  Moreover, there exists an ergodic measure $\mu_Y$ such that $\mathrm{supp}(\mu_Y)=\Lambda_Y$ and for any $t>0$, one has
  \begin{equation}\label{eq.measure}
    \int \log|\det(\Phi^Y_t|F_Y(x))|d\mu_Y\leq 0.
  \end{equation}
\end{proposition}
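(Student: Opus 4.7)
The plan has two movements. First, I would establish that $C(\sigma_Y, Y)$ is not singular hyperbolic for every $Y$ in a suitable $C^1$-neighborhood $\cU$ of $X$. Second, I would extract a transitive $\cN$-set inside $C(\sigma_Y, Y)$ carrying an ergodic measure with the prescribed determinant inequality.

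For the first movement, I would start by observing that $C(\sigma)$ itself is not singular hyperbolic. Indeed, Remark \ref{rmk.h-class} ensures $C(\sigma)$ is Lyapunov stable, the Lorenz-like assumption guarantees hyperbolicity of its singularities, and aperiodicity of $C(\sigma)$ is part of the contradiction hypothesis; hence Theorem \ref{thm.yang} would otherwise force $C(\sigma)$ to contain a periodic orbit. By $C^1$-generic continuity of the chain recurrence class at $X$, one has $C(\sigma_Y, Y) \to C(\sigma)$ in the Hausdorff topology as $Y \to X$, and the partially hyperbolic splitting $E^s \oplus F$ extends to a neighborhood $V$ of $C(\sigma)$ by robustness of dominated splittings. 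If some $Y \in \cU$ had $C(\sigma_Y, Y)$ singular hyperbolic, robustness of singular hyperbolicity would supply an open set $V' \subset V$ and a $C^1$-neighborhood $\cW$ of $Y$ such that every $Z \in \cW$ has its maximal invariant set in $V'$ singular hyperbolic; shrinking $\cU$ so that $Y \in \cU$ forces both $X \in \cW$ and $C(\sigma) \subset V'$, I would conclude that $C(\sigma)$ sits in a singular hyperbolic invariant set of $X$ and inherits the property, a contradiction.

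For the second movement, I would apply Zorn's lemma to the collection of compact $Y$-invariant non-singular-hyperbolic subsets of $C(\sigma_Y, Y)$ ordered by reverse inclusion. Each element $K_\alpha$ of a descending chain carries an ergodic measure $\mu_\alpha$ with $\int \log|\det \Phi^Y_1|F_Y| \, d\mu_\alpha \le 0$ (the variational characterisation of sectional expansion: if every invariant measure had strictly positive integral, a Ma\~n\'e--Cao type argument would recover uniform expansion, contradicting non-singular-hyperbolicity). A weak-$*$-convergent subnet of $\{\mu_\alpha\}$ produces an invariant measure supported in $\bigcap_\alpha K_\alpha$ satisfying the same inequality, so the intersection remains non-singular-hyperbolic and serves as a lower bound. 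Zorn's lemma then furnishes a minimal element $\Lambda_Y$, which is an $\cN$-set by definition.

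To finish, I would fix any ergodic $\mu_Y$ on $\Lambda_Y$ with $\int \log|\det \Phi^Y_1|F_Y| \, d\mu_Y \le 0$; its support is a compact transitive invariant subset of $\Lambda_Y$. Were $\mathrm{supp}(\mu_Y)$ a proper subset, minimality would render it singular hyperbolic, obliging every ergodic measure on it---$\mu_Y$ in particular---to have strictly positive determinant integral, a contradiction. Hence $\mathrm{supp}(\mu_Y) = \Lambda_Y$, and $\Lambda_Y$ is transitive. The inequality for arbitrary $t > 0$ reduces to the $t=1$ case by $Y$-invariance of $\mu_Y$ and the cocycle identity, giving $\int \log|\det\Phi^Y_t|F_Y| \, d\mu_Y = t \int \log|\det\Phi^Y_1|F_Y| \, d\mu_Y$. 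The main obstacle throughout is the first movement, since non-singular-hyperbolicity is not a priori an open property---a shrinking $C(\sigma_Y, Y)$ could conceivably acquire sectional expansion that the larger $C(\sigma)$ lacks---and resolving it requires simultaneously exploiting robustness of singular hyperbolicity on maximal invariant sets and $C^1$-generic continuity of $Y \mapsto C(\sigma_Y, Y)$ at $X$.
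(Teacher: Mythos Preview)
Your second movement is sound and matches what the paper does in spirit: the paper simply invokes Zorn's lemma and then cites \cite[Lemma 6.11, Corollary 6.12]{GY} for transitivity and for the measure. Your route---producing an ergodic measure from the variational characterisation of sectional expansion, then recovering transitivity from the fact that $\mathrm{supp}(\mu_Y)$ cannot be a proper (hence singular-hyperbolic) subset of $\Lambda_Y$---is exactly what those references unpack.

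The first movement has a genuine gap. The step ``shrinking $\cU$ so that $Y\in\cU$ forces both $X\in\cW$ and $C(\sigma)\subset V'$'' does not work: the robustness data $\cW$ and $V'$ have sizes governed by the singular-hyperbolicity constants of $C(\sigma_Y,Y)$, and these may degenerate as $Y\to X$, so there is no reason $\cW$ should contain $X$ or $V'$ should contain $C(\sigma)$ however close $Y$ is taken. The paper sidesteps this by arguing, in effect, that the set $\cS=\{Y: C(\sigma_Y,Y)\ \text{is singular hyperbolic}\}$ is itself open. Indeed, upper semi-continuity of $Y\mapsto C(\sigma_Y,Y)$---which holds at \emph{every} $Y$, not merely generic ones---places $C(\sigma_Z,Z)$ inside any prescribed neighborhood $U_0$ of $C(\sigma_Y,Y)$ for $Z$ near $Y$, and robustness of singular hyperbolicity on the maximal invariant set in $U_0$ then forces $C(\sigma_Z,Z)$ to be singular hyperbolic. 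Hence $\cS\cup\Int(\xX^1(M)\setminus\cS)$ is open and dense, and the generic $X$ lies in one piece or the other; since $C(\sigma)$ is not singular hyperbolic, $X\in\Int(\xX^1(M)\setminus\cS)$, which is the required $\cU$. You correctly flag the obstacle in your final paragraph, but the mechanism you propose for resolving it is circular.
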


\begin{proof}
  Suppose $C(\sigma)$ is not singular hyperbolic. Note that the continuation $C(\sigma_Y,Y)$ is well-defined as long as $Y$ is $C^1$ close enough to $X$, and it varies upper semi-continuously. We claim that there is a $C^1$ neighborhood $\cU$ of $X$ such that for any $Y\in\cU_0$, the continuation $C(\sigma_Y,Y)$ is not singular hyperbolic. Suppose the claim does not hold, i.e. in any $C^1$ neighborhood of $X$ there is a vector field $Y$ such that $C(\sigma_Y,Y)$ is singular hyperbolic. Then since $X$ is generic, the upper semi-continuity of chain recurrence classes would imply that $C(\sigma)$ is already singular hyperbolic, a contradiction. This proves the claim.

  By Zorn's lemma, for any $Y\in\cU$, the chain recurrence class $C(\sigma_Y,Y)$ contains an $\cN$-set $\Lambda_Y$. By \cite[Lemma 6.11]{GY}, $\Lambda_Y$ can be chosen transitive.
  To see the existence of $\mu_Y$, one can refer to \cite[Corollary 6.12]{GY}.
\end{proof}

By the proposition, we may assume that the neighborhood $\cV_X$ is small enough such that for each $Y\in\cV_X$, $C(\sigma_Y,Y)$ is not singular hyperbolic and hence contains a transitive $\cN$-set $\Lambda_Y$.

\begin{lemma}\label{lem.existence-of-singularity}
  Let $Y\in\xX^1(M,\Gamma)\cap \cV_X$ be a generic vector field in $\xX^1(M,\Gamma)$. Suppose $\Lambda_Y$ is any $\cN$-set contained in $C(\sigma_Y,Y)$, then $\Lambda_Y\cap\Sing(Y)\neq\emptyset$. Moreover, $\Lambda_Y$ is not reduced to a singularity.
\end{lemma}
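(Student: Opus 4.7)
The proof reduces to two claims. For the easier second claim that $\Lambda_Y$ is not reduced to a singularity, I would argue directly: by property (P1), every singularity of $Y$ in $C(\sigma_Y,Y)$ is Lorenz-like, so its tangent space admits a dominated splitting $E^{ss}\oplus(E^c\oplus E^u)$ with $E^{ss}$ contracting, while the Lorenz-like condition $\lambda_s+\lambda_{s+1}>0$ provides sectional expansion on $E^c\oplus E^u$. Hence a single such singularity is already singular hyperbolic as a set, which contradicts the $\cN$-set property if $\Lambda_Y$ were reduced to it.

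For the first claim, I would proceed by contradiction, assuming $\Lambda_Y\cap\Sing(Y)=\emptyset$, with $\Lambda_Y$ understood as chain-transitive in the sense of the $\cN$-sets produced by Proposition \ref{prop.N-set}. The strategy is to verify the hypotheses of Theorem \ref{thm.codim-one} for $\Lambda_Y$ and then to play its conclusion against the Lyapunov stability of $C(\sigma_Y,Y)$. First, $\Lambda_Y$ is minimally nonhyperbolic: as a nonsingular set the $\cN$-set condition forces $\Lambda_Y$ to be not hyperbolic, while every proper compact invariant subset is singular hyperbolic and, being nonsingular, is hyperbolic. Second, $\Lambda_Y$ is disjoint from $\Gamma$: since $\Gamma$ is the closure of finitely many homoclinic orbits of singularities in $C(\sigma_{Y_0},Y_0)$, it equals the union of those orbits with the corresponding singularities, so any point of $\Gamma\setminus\Sing(Y)$ has a singularity as its $\omega$-limit; thus $\Lambda_Y\cap\Gamma\neq\emptyset$ would drag a singularity into $\Lambda_Y$ by invariance. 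Third, since $Y\in\cV_X$ is $C^1$ close to $X$, the partially hyperbolic splitting $T_{C(\sigma)}M=E^s\oplus F$ with $\dim F=2$ persists by continuation to the maximal invariant set in a neighborhood of $C(\sigma)$, which contains $\Lambda_Y$; Lemma \ref{lem.partial-hyperbolicity} then provides the codimension-one partially hyperbolic splitting $\cN_{\Lambda_Y}=N^s\oplus N^c$ of the normal bundle.

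I would then invoke Theorem \ref{thm.codim-one}: as $Y$ is generic in $\xX^1(M,\Gamma)$ and $\Lambda_Y$ is a chain-transitive, nonhyperbolic set disjoint from $\Gamma\cup\Sing(Y)$ with the required codimension-one partial hyperbolicity, $\Lambda_Y$ is contained in a chain-transitive set $\Lambda'$ containing a periodic saddle of index $\dim M-2$, and periodic sinks lie in the closure of the unstable set of $\Lambda'$ arbitrarily close to $\Lambda'$. By chain-transitivity, $\Lambda'\subset C(\sigma_Y,Y)$. Next, a generic $Y\in\xX^1(M,\Gamma)$ is $\Gamma$-avoiding Kupka-Smale by Theorem \ref{thm.Gamma-KS}, and since $\Gamma$ carries no periodic orbit (a periodic orbit cannot be asymptotic to a singularity), this is equivalent here to weak Kupka-Smale; property (P2) then ensures that $C(\sigma_Y,Y)$ is Lyapunov stable. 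A standard Lyapunov-stability argument yields $\overline{W^u(\Lambda')}\subset C(\sigma_Y,Y)$: for $y\in W^u(\Lambda')$, the backward orbit $\phi_{-t}(y)$ eventually enters the small neighborhood $V$ of $C(\sigma_Y,Y)$ provided by Lyapunov stability, so $y=\phi_t(\phi_{-t}(y))$ lies in any prescribed neighborhood of $C(\sigma_Y,Y)$, hence in $C(\sigma_Y,Y)$ itself. Thus the periodic sinks furnished by Theorem \ref{thm.codim-one} lie inside $C(\sigma_Y,Y)$, forcing the non-trivial chain recurrence class $C(\sigma_Y,Y)$ to coincide with a sink; this contradicts $C(\sigma_Y,Y)$ containing the Lorenz-like saddle $\sigma_Y$.

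The main subtlety is to assemble all the structural and generic ingredients at $Y$: persistence of the codimension-two partial hyperbolicity under $C^1$ perturbation, the identification of $\Gamma$-avoiding Kupka-Smale with weak Kupka-Smale in this setting (relying on $\Gamma$ carrying no periodic orbit), the Lyapunov stability coming from property (P2), and the residual genericity required by Theorem \ref{thm.codim-one}. Once these are in hand, the proof is driven by the simple incompatibility between Lyapunov stability of $C(\sigma_Y,Y)$ and the presence of nearby sinks in the closure of the unstable set of a subset of the class.
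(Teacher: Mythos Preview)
Your proposal is correct and follows essentially the same approach as the paper's proof: reduce the second assertion to the singular hyperbolicity of Lorenz-like singularities, and for the first assertion argue by contradiction, passing from $\Gamma$-avoiding Kupka-Smale to weak Kupka-Smale (hence Lyapunov stability of $C(\sigma_Y,Y)$ via (P2)), verifying that $\Lambda_Y$ is disjoint from $\Gamma$ and codimension-one partially hyperbolic, and then invoking Theorem~\ref{thm.codim-one} to produce sinks in the closure of the unstable set, contradicting Lyapunov stability. Your write-up is in fact more explicit than the paper's on several points (the chain-transitivity caveat, why $\Lambda_Y\cap\Gamma=\emptyset$, and why sinks inside a Lyapunov stable class are impossible), but the logical skeleton is identical.
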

\begin{proof}
  By Theorem \ref{thm.Gamma-KS}, $Y$ is $\Gamma$-avoiding Kupka-Smale. Then as $\Gamma$ consists of a set of singularities and their homoclinic orbits, one can see that $Y$ is weak Kupka-Smale, and moreover, $Y\in\xM_{n_0}$. By property (P\ref{p.ls}), $C(\sigma_Y,Y)$ is Lyapunov stable. Note that singularities in $C(\sigma_Y,Y)$ are Lorenz-like and hence singular hyperbolic, we need only prove the first part of the lemma.
  Proving by contradiction, suppose that $\Lambda_Y\cap\Sing(Y)=\emptyset$, then $\Lambda_Y$ is codimension one partially hyperbolic. Moreover, since $\Gamma$ is a union of singularities and their associated homoclinic orbits, one has $\Lambda_Y\cap\Gamma=\emptyset$. Then by Theorem \ref{thm.codim-one}, there are periodic sinks contained in closure of the unstable set of $C(\sigma_Y,Y)$, a contradiction to the fact that $C(\sigma_Y,Y)$ is Lyapunov stable.
\end{proof}

\begin{remark}
  This lemma accounts for the essential difference (for the proof) between the 3-dimensional case in \cite[Section 6]{GY} and the higher-dimensional case in the present paper. It allows us to get around the $C^2$ argument in \cite{GY}. More discussions can be found in Remark \ref{rmk.codim-one-1}.
\end{remark}

\subsection{Infinitesimal $cu$ adapted returns: the contradiction}
\label{sect.adapted-returns}
The final subsection defines the notation of {\em infinitesimal $cu$ adapted returns} for the cross-section system as defined in Section \ref{sect.cross-section-system}, and uses it to prove Theorem \ref{thm.h-class} by obtaining a contradiction: on the one hand, ones shows that every weak Kupka-Smale vector field in $\cV_X$ has no infinitesimal $cu$ adapted returns; on the other hand, for any $Y\in\xX^1(M,\Gamma)\cap\cV_X$ which is generic in $\xX^1(M,\Gamma)$, it can be proved that there is $Z$ arbitrarily close to $Y$ such that $Z$ is weak Kupka-Smale and has an infinitesimal $cu$ adapted return.

To define the infinitesimal $cu$ adapted returns, one looks at the sets $l^{\pm}_i$ (see Definition \ref{def.cross-section-system}) and the returns of small $cu$-curves that starts from $l^{\pm}_i$.
Roughly speaking, an infinitesimal $cu$ adapted return for the cross-section system $(\Sigma_Y,R_Y)$ corresponds to a periodic chain of returns of arbitrarily small $cu$-curves  on $\Sigma_Y$ that starts from some $l^{\pm}_i$ (and moreover, the periodic chain of returns shall satisfy an extra condition which essentially ensures that the basin of some sink accumulates on $C(\sigma_Y,Y)$). See Definition 6.19 in \cite{GY} for the precise statements.

Observe that the key point in the definition of infinitesimal $cu$ adapted returns is the orientation of $cu$-curves in the cross-sections $S^{\pm}_i$, see Definition 6.17 in \cite{GY}. For us, what matters is that the unstable manifold of each singularity is one-dimensional and hence gives an orientation of left and right.
This is no different from the 3-dimensional case.

Now, the following two propositions (Proposition \ref{prop.no-adapted-return} and \ref{prop.exist-adapted-return}) shall give the contradiction and complete the proof of Theorem \ref{thm.h-class}.
\begin{proposition}[{\cite[Proposition 6.20]{GY}}]
\label{prop.no-adapted-return}
  Every weak Kupka-Smale vector field in $\cV_X$ has no infinitesimal $cu$ adapted returns.
\end{proposition}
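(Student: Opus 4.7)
The plan is to argue by contradiction. Suppose $Y\in\cV_X$ is weak Kupka-Smale and that the cross-section system $(\Sigma_Y,R_Y)$ admits an infinitesimal $cu$ adapted return. Unpacking the definition, one obtains a sequence of $cu$-arcs $\gamma_n\subset\Sigma_Y$ with basepoints $x_n\in l^{\pm}_{i_n}$, with $\operatorname{length}(\gamma_n)\to 0$, together with integers $k_n$ realizing a closed loop of returns $R_Y^{j}(\gamma_n)$, $0\le j\le k_n$, whose terminal iterate lies on the prescribed side of $l^{\pm}_{i_n}$. By property (P2) the class $C(\sigma_Y,Y)$ is Lyapunov stable, and by the standing contradiction hypothesis in the proof of Theorem \ref{thm.h-class} it is aperiodic. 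The goal is therefore to extract from the adapted loop either a periodic orbit inside $C(\sigma_Y,Y)$ or a periodic sink whose basin accumulates $C(\sigma_Y,Y)$ from outside; both conclusions are forbidden.

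The proof splits into two steps. \emph{Step one: reduction to one-dimensional dynamics.} Away from the singularities, partial hyperbolicity of $C(\sigma_Y,Y)$ (the splitting $E^s\oplus F$ with $\dim F=2$) and Lemma \ref{lem.partial-hyperbolicity} give uniform expansion of $R_Y$ on $cu$-curves. Near each $\sigma_i$, property (P1) (Lorenz-like singularities) together with condition (4) of Definition \ref{def.cross-section-system} implies that the holonomy defined by the loop depends continuously on the basepoint along $l^{\pm}_{i_n}$, and the adapted orientation rules out any flip through $W^{ss}(\sigma_i)\oplus E^u(\sigma_i)$. Consequently, the chain of returns defines a continuous one-parameter family of returns indexed by an interval $J_n\subset l^{\pm}_{i_n}$ containing $x_n$, from which one extracts a well-defined one-dimensional first-return map $\rho_n\colon J_n\to l^{\pm}_{i_n}$ along the adapted direction. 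Because $\operatorname{length}(\gamma_n)\to 0$ while the loop closes up, $\rho_n$ forces a fixed point, yielding a hyperbolic periodic orbit $\operatorname{Orb}(p_n)$ of $Y$ that shadows the chain.

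\emph{Step two: passage to the limit.} Applying Lemma \ref{lem.periodic-c-curve-general} to the periodic central curve at $p_n$ produced by the central model of Section \ref{sect.central-model}, either $p_n$ is itself a sink, or the unstable set of $\operatorname{Orb}(p_n)$ contains a periodic sink $Q_n$. By construction the supports of the loops converge, in the Hausdorff topology, to a compact set of $C(\sigma_Y,Y)$ threaded through homoclinic orbits of singularities in $\Gamma$. Upper semicontinuity of chain recurrence classes together with Lyapunov stability of $C(\sigma_Y,Y)$ force the Hausdorff limits of the $Q_n$ (or of $\operatorname{Orb}(p_n)$ in the first alternative) to be contained in $C(\sigma_Y,Y)$. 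A periodic orbit in $C(\sigma_Y,Y)$ is excluded by aperiodicity, and a sink lying outside but with its basin invading every neighborhood of $C(\sigma_Y,Y)$ contradicts the existence of forward-invariant neighborhoods guaranteed by Lyapunov stability.

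The main obstacle is Step one: keeping track of orientations and relative sizes through the neighborhoods of the singularities $\sigma_i$. The expansion along $E^u(\sigma_i)$ and contraction along $E^{ss}(\sigma_i)$ blow up arbitrarily as $\gamma_n$ must approach $W^{ss}_{\mathrm{loc}}(\sigma_i)$ for the loop to close, so the continuity of the induced one-dimensional return requires delicate use of the tubular coordinates $h^{\pm}_i$ together with the Lorenz-like condition $W^{ss}(\sigma_i)\cap C(\sigma_Y,Y)=\{\sigma_i\}$. It is precisely the \emph{adapted} part of the hypothesis that forbids an orientation reversal across $l^{\pm}_i$, thereby allowing the one-dimensional reduction; without it the cancellation between expansion and Lorenz-like contraction could destroy the fixed-point argument.
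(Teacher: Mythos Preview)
The paper does not give a self-contained proof of this proposition; it merely cites \cite[Proposition~6.20]{GY} and records the guiding idea: an infinitesimal $cu$ adapted return forces the basin of some periodic sink to accumulate on $C(\sigma_Y,Y)$, which is incompatible with the Lyapunov stability of that class (property (P\ref{p.ls})). Your overall contradiction scheme therefore matches the paper's sketch. However, the specific mechanism you propose to produce the sink has genuine gaps.

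First, you invoke aperiodicity of $C(\sigma_Y,Y)$ for an arbitrary weak Kupka-Smale $Y\in\cV_X$. The standing contradiction hypothesis in Section~\ref{sect.pf-of-h-class} is that $C(\sigma)=C(\sigma_X,X)$ is aperiodic for the original generic $X$; nothing in the listed properties (P1)--(P3) transfers this to nearby $Y$. The paper's sketch uses only Lyapunov stability, and that is all you are entitled to here.

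Second, and more seriously, Step~2 appeals to Lemma~\ref{lem.periodic-c-curve-general} and the central-model machinery of Section~\ref{sect.central-model}. That apparatus is set up for a \emph{nonsingular} compact invariant set $K$ (so that the normal bundle, the linear Poincar\'e flow, and a uniform-size plaque family exist); see Lemma~\ref{lem.plaque-families} and the hypotheses of Theorem~\ref{thm.central-model}. The periodic orbits you manufacture in Step~1 shadow a cycle that passes through neighborhoods of the Lorenz-like singularities, where no such central model is available, so Lemma~\ref{lem.periodic-c-curve-general} cannot be applied as stated.

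Third, the fixed-point argument in Step~1 is not justified. The points of $l^{\pm}_{i_n}$ lie on $W^s_{loc}(\sigma_{i_n})$, so $R_Y$ is \emph{undefined} there; the ``return'' of a $cu$-arc based at $x_n\in l^{\pm}_{i_n}$ is governed by the unstable separatrix of $\sigma_{i_n}$, not by a continuous map on $l^{\pm}_{i_n}$. You assert that the adapted condition yields a continuous one-dimensional return $\rho_n:J_n\to l^{\pm}_{i_n}$ with a fixed point, but you have neither established continuity across the singular passage nor any trapping condition $\rho_n(J_n)\subset J_n$ that would force a fixed point. In the actual argument of \cite{GY}, the ``adapted'' orientation data is used not to build a periodic orbit but to control on which side of each $l^{\pm}_i$ the successive $cu$-arcs land, so that the decreasing lengths force the arcs into the basin of a sink already present in the complement of $C(\sigma_Y,Y)$; the contradiction with Lyapunov stability is then immediate. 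Your reduction replaces this combinatorial/orientation argument by tools that do not apply in the singular setting.
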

The general idea behind the proposition is the following: if for some weak Kupka-Smale $Y\in\cV_X$ there exists an infinitesimal $cu$ adapted return for $(\Sigma_Y,R_Y)$, then one can show that $C(\sigma_Y,Y)$ is accumulated by the basin of some sink, a contradiction to the fact that $C(\sigma_Y,Y)$ is Lyapunov stable.

The next result corresponds to Proposition 6.25 in \cite{GY} and shares the same proof.
\begin{proposition}\label{prop.exist-adapted-return}
  For any $Y\in\xX^1(M,\Gamma)\cap\cV_X$ which is generic in $\xX^1(M,\Gamma)$, there is $Z$ arbitrarily close to $Y$ such that $Z$ is weak Kupka-Smale and has an infinitesimal $cu$ adapted return for the cross-section system $(\Sigma_Z,R_Z)$.
\end{proposition}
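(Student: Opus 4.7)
The plan is to follow exactly the scheme of \cite[Proposition 6.25]{GY}: starting from a generic $Y\in\xX^1(M,\Gamma)\cap\cV_X$, use the connecting lemmas to perturb $Y$ inside $\xX^1(M)$ so as to produce a true periodic chain of $cu$-returns through the cross-section system $(\Sigma_Y,R_Y)$, then argue that a small extra perturbation makes this chain adapted and the vector field weak Kupka-Smale. The essential new input (compared with \cite{GY}, which works in dimension three) is Lemma \ref{lem.existence-of-singularity}, which guarantees that every $\cN$-set inside $C(\sigma_Y,Y)$ contains a singularity; this is precisely the step where Theorem \ref{thm.codim-one} replaces the Pujals--Sambarino type $C^2$ input invoked in \cite{GY} (cf.\ Remark \ref{rmk.codim-one-1}).

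\textbf{Extracting a singular $\cN$-set and building a candidate chain.} First, by Proposition \ref{prop.N-set}, $C(\sigma_Y,Y)$ contains a transitive $\cN$-set $\Lambda_Y$ carrying an ergodic measure $\mu_Y$ satisfying \eqref{eq.measure}. Lemma \ref{lem.existence-of-singularity} then gives a Lorenz-like singularity $\sigma\in\Lambda_Y$ and guarantees that $\Lambda_Y$ is not reduced to $\sigma$, so by transitivity there is a regular point $x\in\Lambda_Y$ whose forward orbit is dense in $\Lambda_Y$ and crosses the cross-sections $S_i^\pm$ infinitely many times before accumulating on $W^s_{loc}(\sigma)$. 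By Lyapunov stability of $C(\sigma_Y,Y)$ (property (P\ref{p.ls})) the separatrices of the singularities in $\Lambda_Y$ whose closure meets $C(\sigma_Y,Y)$ are entirely contained in $C(\sigma_Y,Y)$ and, using transitivity of $\Lambda_Y$ together with the density of the orbit of $x$, one extracts a finite sequence of orbit segments passing through $\Sigma_Y$ that realizes a chain of cross-section returns starting and essentially ending on the lines $l^\pm_j$, with the geometric left/right orientation data coming from the Lorenz-like structure of each visited singularity.

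\textbf{Closing up with the connecting lemma.} Next, at each segment of the above chain near a singularity $\sigma_j$, the $C^1$ connecting lemma for flows (Lemma \ref{lem.connecting-wx}) applies, since the regular orbit of $x$ is recurrent and non-periodic, and it produces an arbitrarily $C^1$-small perturbation $Z_0$ of $Y$ turning the finite chain of returns into a periodic chain. The perturbations are localized in small flow boxes along the chosen orbit segments, well away from any points of $\Gamma$ other than the singularities themselves (which are fixed by all vector fields of $\xX^1(M,\Gamma)$), so the cross-section system $(\Sigma_{Z_0},R_{Z_0})$ is a continuation of $(\Sigma_Y,R_Y)$ and the chain of returns for $Z_0$ is a genuine periodic chain in $(\Sigma_{Z_0},R_{Z_0})$; note that we are now perturbing inside $\xX^1(M)$ rather than $\xX^1(M,\Gamma)$, which is needed because creating the chain of returns may require modifying the homoclinic connections that define $\Gamma$. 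A further arbitrarily small perturbation supported in a neighborhood of the periodic chain turns all the new critical elements hyperbolic and achieves weak Kupka-Smale-ness, and an additional local tilt of the image $cu$-curves onto the correct side of the visited lines $l^\pm_j$ ensures the chain is adapted in the sense of \cite[Definition 6.19]{GY}.

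\textbf{Main obstacle.} The delicate step is the last one: guaranteeing that after creating the periodic chain and cleaning up into a weak Kupka-Smale vector field $Z$ the adapted condition persists. This requires simultaneously controlling the $cu$-direction of the returning curves --- which is governed by the codimension two dominated splitting $E^s\oplus F$ extended to a neighborhood of $C(\sigma_Y,Y)$ --- and the precise geometric position of their images with respect to the lines $l^\pm_j$. Because the unstable manifold of each singularity in $C(\sigma_Y,Y)$ is one-dimensional, the required ``left/right'' choice of orientation used in the definition of adaptedness is formally identical to the one used in \cite{GY}, so the verification of adaptedness carries over verbatim; the only subtlety is that one must check that the perturbations constructed above do not accidentally destroy the Lorenz-like structure of the singularities in $C(\sigma_Z,Z)$, which is automatic since such structure is persistent under $C^1$ perturbations staying in $\cU_X$ (property (P1)).
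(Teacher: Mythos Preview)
Your proposal misses the central mechanism of the proof. You correctly identify that Lemma \ref{lem.existence-of-singularity} is the new input replacing the $C^2$ argument of \cite{GY}, but the way you propose to use it---extracting a dense orbit in $\Lambda_Y$, closing it into a ``periodic chain of returns'' with the connecting lemma, and then tilting $cu$-curves---is not the scheme of \cite[Proposition 6.25]{GY}, and it is not clear it can be made to work.

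The actual argument is organized around the maximality of $n_0$, which you do not mention at all. Since $Y\in\xM_{n_0}$, no $C^1$-small perturbation of $Y$ inside $\cV_X$ can create an additional homoclinic orbit of a singularity without destroying one of the existing ones. This rigidity, combined with the fact that $\Lambda_Y$ contains a singularity (Lemma \ref{lem.existence-of-singularity}), forces strong structural constraints: every singularity in $\Lambda_Y$ already has at least one homoclinic orbit (\cite[Lemma 6.15]{GY}), and if it has only one, that homoclinic orbit must lie in $\Lambda_Y$ (\cite[Lemma 6.16]{GY}). One then proceeds by a case analysis on the number of homoclinic orbits of the singularities in $\Lambda_Y$; in particular, if some singularity has two homoclinic orbits, a specific perturbation near that singularity produces the infinitesimal $cu$ adapted return (\cite[Lemma 6.24]{GY}). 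The conclusion is obtained by contradiction through a careful combinatorial study of the return map on $(\Sigma_Y,R_Y)$, not by a direct closing construction. Your plan bypasses all of this and, in particular, gives no reason why the ``adapted'' condition---which encodes a precise combinatorial relation between the orientation of the returning $cu$-curves and the sides of the lines $l_i^\pm$---would hold after your perturbations; the vague ``local tilt'' in your last step is exactly the hard part, and it is handled in \cite{GY} through the $n_0$-maximality argument, not by an ad hoc adjustment.
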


For the proof of this result, note that $Y$ has already $n_0$ homoclinic orbits of singularities in $C(\sigma_Y,Y)$, hence one can not obtained an extra homoclinic orbit of singularities by perturbation, without breaking the existing ones. This imposes rather strong restrictions on the $\cN$-set of $\Lambda_Y\subset C(\sigma_Y,Y)$, such as every singularity in $\Lambda_Y$ has at least one homoclinic orbit \cite[Lemma 6.15]{GY}, and if a singularity in $\Lambda_Y$ has only one homoclinic orbit, then the homoclinic orbit must be contained in $\Lambda_Y$ \cite[Lemma 6.16]{GY}. Moreover, one can show that if a singularity in $\Lambda_Y$ has two homoclinic orbits, then one can perturb the system to obtain a weak Kupka-Smale $Z$ such that $(\Sigma_Z,R_Z)$ has an infinitesimal $cu$ adapted return \cite[Lemma 6.24]{GY}. Finally, these results can be used for a proof by contradiction, with a careful study of the return map of the cross-section system.

\vspace{0.5cm}
\noindent {\em Shaobo Gan}, School of Mathematical Sciences, Peking University, Beijing 100871, China\\
E-mail address: gansb@pku.edu.cn

\vspace{0.5cm}
\noindent {\em Ruibin Xi}, School of Mathematical Sciences, Peking University, Beijing 100871, China\\
E-mail address: ruibinxi@math.pku.edu.cn

\vspace{0.5cm}
\noindent {\em Jiagang Yang}, Departamento de Geometria, Instituto de Matem\'{a}tica e
Estat\'{i}stica, Universidade Federal Fluminense, Niter\'{o}i, Brazil\\
E-mail address: yangjg@impa.br
\vspace{0.5cm}

\noindent {\em Rusong Zheng}, Joint Research Center on Computational Mathematics and Control, Shenzhen MSU-BIT University, Shenzhen 518172, China\\
E-mail address: zhengrs@smbu.edu.cn
\end{document}